\newtheorem{thm}{Theorem}[section] 
\newtheorem{lem}[thm]{Lemma} 
\newtheorem{cor}[thm]{Corollary} 
\theoremstyle{definition}
\newtheorem{prb}[thm]{Problem}
\newtheorem{de}[thm]{Definition}
\numberwithin{equation}{section} 
\newcommand{\N}{\Bbb{ N}} 
\newcommand{\Z}{\Bbb{ Z}} 
\newcommand{\setsuchthat}{\mid}
\newcommand{\ul}[1]{\underline{#1}}
\newcommand{\ol}[1]{\overline{#1}}
\newcommand{\Pot}{\mathcal{P}}
\newcommand{\Ann}{\operatorname{Ann}}
\newcommand{\matrixring}[2]{\operatorname{Mat}_{#1} ( {#2} )}
\newcommand{\MatTwo}[4]
           { \left( \begin{smallmatrix}
                           {#1} & {#2} \\
                           {#3} & {#4}
             \end{smallmatrix} \right) }
\newcommand{\algop}[2]{( {#1}, {#2} )}
\newcommand{\sumd}[2]{\sum_{{\scriptscriptstyle
                             \begin{array}{c}
                               \scriptstyle #1 \\[-1.5mm] \scriptstyle #2
                             \end{array}
                           }}}
\newcommand{\tup}[3]{(#1_{#2},\dots,#1_{#3})}
\DeclareMathAlphabet{\mathbfsl}{OT1}{cmr}{bx}{it}
\DeclareMathAlphabet{\mathsc}{OT1}{cmr}{m}{sc}
\newcommand{\tupBold}[1]{\mathbfsl{#1}}
\newcommand{\vb}[1]{\tupBold{#1}}
\renewcommand{\emptyset}{\varnothing}
\newcommand{\fdeg}{\mathsc{Fdeg}}
\newcommand{\fundeg}{\fdeg}
\newcommand{\partdeg}{\mathsc{Pdeg}}
\newcommand{\pdeg}{\deg_{p}}
\newcommand{\Aug}{\operatorname{Aug}}
\newcommand{\AugZA}{\operatorname{Aug}(\Z[A])}
\newcommand{\range}{\operatorname{Range}}
\title[Chevalley Warning on abelian groups]{Chevalley Warning type
  results on
  abelian groups}
\author{Erhard Aichinger}
\address{
Institut f\"ur Algebra,
Johannes Kepler Universit\"at Linz, Altenberger Strasse 69, 4040 Linz,
Austria}
\email{erhard@algebra.uni-linz.ac.at}
\author{Jakob Moosbauer}
\address{
Institut f\"ur Algebra,
Johannes Kepler Universit\"at Linz, Altenberger Strasse 69, 4040 Linz,
Austria}
\email{jakob.moosbauer@jku.at}
\subjclass[2010]{20K01 (13F20, 20C05)}
\keywords{Chevalley Warning Theorem, functional degree, abelian groups}
\thanks{Supported by the Austrian Science Fund (FWF):~P29931.}
\urladdr{http://www.jku.at/algebra}
\date{\today}
\begin{document}
\bibliographystyle{amsalpha}
\begin{abstract}
  We develop a notion of \emph{degree} for functions
  between two abelian groups that allows us to
  generalize the Chevalley Warning Theorems from fields
  to noncommutative rings or abelian groups of prime power order.
\end{abstract}

\maketitle
\section{Introduction} \label{sec:intro}
A classical result by C.\ Chevalley \cite{Ch:DDHD} states
that if a system of polynomial
equations $f_1 (x_1, \ldots, x_N) = \cdots =
f_r (x_1, \ldots, x_N)= 0$ over a finite field $F$ has exactly one
solution in $F^N$, then the sum of the total degrees of the
$f_i$'s is at least $N$. E.~Warning \cite{Wa:BZVA} improved this result
by showing that under the hypothesis that $N$ is strictly larger
than the sum of the total degrees of the $f_i$'s,
the number of solutions, which cannot be $1$ by Chevalley's result,
is divisible by the characteristic of the field $F$ (Warning's First Theorem),
and the system has either no or at least $|F|^{N - \sum_{i=1}^r \deg(f_i)}$
solutions (Warning's Second Theorem). Proofs of these results can
be found, e.g., in \cite{As:ANPO}.
All three results have been considerably strengthened:
for Warning's First Theorem, \cite{Ax:ZOPO,Ka:OATO} provide
lower bounds for $\mu$ such that $p^{\mu}$ divides the number of solutions.
O.~Moreno and C.J.\ Moreno showed that in these bounds, the total degree
of a polynomial can be replaced with the \emph{$p$-weight degree} \cite{MM:IOTC}.
S.H. Schanuel and D.J. Katz generalized Chevalley's Theorem to a wider class of
finite commutative rings \cite{Sc:AEOC, Ka:PCDF}. D. Brink 
considered solutions lying in rectangular subsets of $F^N$ (\cite{Br:CTWR},
with a $p$-weight degree version given in \cite{CGM:AITC}).
Brink's result was used in \cite{KS:EOPO, Ai:SSOE} to solve
equations over finite nilpotent rings, groups, and generalizations of these structures.

In this article, we generalize the Chevalley Warning Theorems into a different
direction: instead of polynomial functions on finite fields, we consider
arbitrary functions on abelian groups. Unlike for polynomial functions on fields,
there is no generally agreed concept of the \emph{degree} of such
functions. In the first half of the present article, we develop
a notion of degree for functions between two abelian
groups based on \cite{Ma:TSMP, VL:NIPV}.
Since this degree does not depend on a term representation
of the function, we suggest the name \emph{functional degree} for
this concept.
Using this functional degree, we obtain variants of
Chevalley's and Warning's First Theorem 
for finite abelian $p$-groups (Theorems~\ref{thm:chevgroupA} and~\ref{thm:warngroupA}).
From these results, one easily derives
Moreno and Moreno's $p$-weight improvement 
of Warning's First Theorem (Theorem~\ref{thm:warnA}).
We also see that Warning's First Theorem remains true
if we replace ``finite field'' with
``not necessarily commutative ring of prime power order''
(Theorem~\ref{thm:warnnoncommring}). 
The proof of the last result takes advantage
of the fact that every function, be it polynomial or not,
has a functional degree.
For polynomial functions over finite fields, the functional degree
specializes to the $p$-weight degree from \cite{MM:IOTC}. This allows us
to generalize Asgarli's proof of Warning's Second Theorem \cite{As:ANPO}
to derive its $p$-weight degree improvement 
\cite[Theorem~2]{MM:IOTC} (Theorem~\ref{thm:warningImprovement}).
A similar improvement of Brink's Theorem \cite[Theorem~1]{Br:CTWR}
can be obtained in the special case that the domain is restricted to
a subgroup of $F^N$ (Section~\ref{sec:chevrest}) and we also
obtain that the number of solutions in the subgroup is divisible
by the field characteristic (Theorem~\ref{thm:rest1}).
Warning's First Theorem can be strenghtened if we know
that the functions are not surjective: such  ``restricted range''
versions are given in Theorem~\ref{thm:restrange} and Corollary~\ref{cor:restrange}.

The \emph{functional degree} defined in this note has its
origins in \cite{BA:AFNO,Ma:TSMP,VL:NIPV}. In \cite{Ma:TSMP},
P.\ Mayr defines
the degree of every finitary operation on an algebra with
a Mal'cev term \cite[(3.9)]{Ma:TSMP}. Our definition applies to
functions from one abelian group $A$ into another abelian group $B$,
and it 
involves
the augmentation ideal of the group ring
that acts on a function by shifting its arguments. This follows
an idea from \cite{VL:NIPV}, where such group rings were
successfully applied in the structure theory of nilpotent algebras
in congruence modular varieties (cf. \cite{FM:CTFC}
and
\cite[Corollary~3.10]{Ma:TSMP}).
In those situations where both definitions apply,
Mayr's degree and the functional degree coincide.
A pivotal result is that the functional degree of 
the composed function $g \circ f$ is at most the product of the
functional degrees of $f$ and $g$ (Theorems~\ref{thm:comp} and~\ref{thm:gfs}).
For arbitrary finite abelian groups, there may be functions of
infinite degree, but if 
domain and codomain are finite abelian
$p$-groups (for the same $p$), then the degree of every function
is finite (Section~\ref{sec:findeg}).

\section{Definition of the functional degree}  \label{sec:fundeg}

In this section, we will introduce the \emph{functional degree}
$\fdeg (f)$ of a function $f$ between two abelian groups.
We write $\N$ for the set of positive integers, $\N_0 := \N \cup \{0\}$,
and for $n \in \N$, the set $\{1,2,\ldots, n\}$ is abbreviated
by $\ul{n}$.
In general, we will write groups additively, and we
sometimes simply write $A$ for the abelian group $\algop{A}{+}$.
By $\Z[A]$, we denote its group ring
over the integers \cite{Pa:TASO}.
The elements of this ring are integer tuples
$r = (z_a)_{a \in A}$ indexed by $A$ with $\{a \in A \mid z_a \neq 0\}$
finite. We will write such a tuple in the
form $r = \sum_{a \in A} z_a \tau_a$, where
$z_a \in \Z$ for all $a \in A$,
instead of the more common
$\sum_{a \in A} z_a a$.
The multiplication of $\Z[A]$ then
satisfies  $\tau_a \cdot \tau_b := \tau_{a + b}$ for all $a, b \in A$;
thus $\algop{\Z[A]}{+, \cdot}$ is a commutative ring with
unity $1 = 1 \tau_0$. 
The \emph{augmentation ideal} of $\Z[A]$ is the ideal  
generated by $\{ \tau_{a} - 1 \mid a \in A \}$,
and it will be denoted by $\AugZA$.
For every ideal
$I$ of $\Z[A]$, the power $I^0$ is defined as $\Z[A]$, and $I^n$ is the
ideal generated by $\{i_1 \cdots  i_n \mid i_1, \ldots, i_n \in I\}$.
For $n \in \N$,
$(\AugZA)^n$ is
generated
by the set $\{  \prod_{i = 1}^n (\tau_{a_i} - 1) \setsuchthat a_1, \ldots, a_n \in A \}$.
Let $\algop{B}{+}$ be an abelian group, and let $B^A := \{ f : A \to B \}$.
The ring $\Z[A]$  operates on the group $\algop{B^A}{+}$ by
\[
( \tau_a * f ) \, (x) := f (x + a),
\]
and hence
\[
((\sum_{a \in A} z_a \tau_a) * f) \, (x) = \sum_{a \in A} z_a f (x + a).
\]
We will use this module operation also for a function from an abelian group
into a ring, or even a field, $B$.
The multiplication on $B$ is then
immaterial for the module operation.
For $f: A \to B$ and an ideal $J$ of $\AugZA$,
$J * f := \{ j * f \mid j \in J\}$.
\begin{de}[Functional degree]
  Let $\algop{A}{+}$ and $\algop{B}{+}$ be abelian
  groups, and let $f : A \to B$. Let $I := \AugZA$ be the
  augmentation ideal of $\Z[A]$. The \emph{functional degree}
  of $f$ is defined by
  \[
  \fdeg (f) := \min\, (\{ n \in \N_0 \mid I^{n+1} * f = 0 \}),
  \]
  with $\fdeg (f) = \infty$ if there is no $n \in \N_0$
  with $I^{n+1} * f = 0$.
\end{de}
For all $k, l \in \N$ with $k < l$, we have $I^l \subseteq I^k$, and
thus if $I^k * f = 0$, also $I^l * f = 0$. Hence for each function
$f$ and for each $n \in \N_0$, we have
\[
\fundeg (f) \le n \text{ if and only if } I^{n+1} = 0.
\]
We also see that $\fundeg (f) = \max\, (\{m \in \N \mid I^m * f \neq 0 \} \cup
\{0\})$, where $\max\, (S)$ is $\infty$ for all infinite subsets $S$
of $\N$. Hence for each $n \in \N_0$, we have
\[
\fundeg (f) \ge n \text{ if and only if } (I^n * f \neq 0 \text{ or }
n = 0).
\]
Another description is given in the following Lemma.
\begin{lem}\label{lem:gen} 
  Let $A$ and $B$ be abelian groups such that
  $A$ is generated by $G \subseteq A$.
    Let $f \colon A \to B$, and let
  $m \in \N_0$. Then the following are equivalent:
  \begin{enumerate}
   \item \label{it:gen1} $\fundeg (f) \leq m$.
   \item \label{it:gen2} For all $g_1, \ldots, g_{m+1} \in G$, we have
    \(
      (\prod_{i=1}^{m+1} (\tau_{g_i}-1)) * f = 0.
    \)
  \end{enumerate}
\end{lem}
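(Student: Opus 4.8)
The plan is to prove the two implications separately. Note first that \eqref{it:gen1} is, by definition, the statement $I^{m+1}*f=0$, and recall from the discussion above that $I^{m+1}$ is generated as an ideal of $\Z[A]$ by the products $\prod_{i=1}^{m+1}(\tau_{a_i}-1)$ with $a_1,\dots,a_{m+1}\in A$. Since $G\subseteq A$, the products occurring in \eqref{it:gen2} are among these generators and hence lie in $I^{m+1}$; so if $I^{m+1}*f=0$, then each of them annihilates $f$. This gives \eqref{it:gen1}$\Rightarrow$\eqref{it:gen2} at once.

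For the converse the key point is that $I$ is already generated, as an ideal, by the possibly much smaller set $\{\tau_g-1\mid g\in G\}$. To prove this I would let $J$ be the ideal generated by $\{\tau_g-1\mid g\in G\}$ and consider $H:=\{a\in A\mid \tau_a-1\in J\}$. The set $H$ is a subgroup of $A$: it contains $0$ since $\tau_0-1=0$; it is closed under addition by the identity
\[
\tau_{a+b}-1=(\tau_a-1)(\tau_b-1)+(\tau_a-1)+(\tau_b-1),
\]
whose right-hand side lies in the ideal $J$ whenever $\tau_a-1,\tau_b-1\in J$; and it is closed under negation since $\tau_{-a}-1=-\tau_{-a}(\tau_a-1)$. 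As $G\subseteq H$ and $G$ generates $A$, we conclude $H=A$, that is, $J=I$.

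Once $I$ is known to be generated by $\{\tau_g-1\mid g\in G\}$, the commutativity of $\Z[A]$ lets me expand any product of $m+1$ elements of $I$ --- each written as a $\Z[A]$-linear combination of the $\tau_g-1$ --- into a $\Z[A]$-linear combination of the pure products $\prod_{i=1}^{m+1}(\tau_{g_i}-1)$ with $g_i\in G$. Hence $I^{m+1}$ is generated as an ideal exactly by the elements appearing in \eqref{it:gen2}. Assuming \eqref{it:gen2}, each such generator annihilates $f$, and since the module action is $\Z[A]$-linear --- $(ru)*f=r*(u*f)$ for all $r\in\Z[A]$ --- every $\Z[A]$-combination of these generators also annihilates $f$. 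Therefore $I^{m+1}*f=0$, which is \eqref{it:gen1}.

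I expect the middle paragraph to be the crux: everything reduces to checking that replacing the full generating set $\{\tau_a-1\mid a\in A\}$ of $I$ by the restricted set $\{\tau_g-1\mid g\in G\}$ does not shrink the ideal, and then that taking $(m+1)$-fold products over $G$ still generates all of $I^{m+1}$. The subgroup-closure argument, powered by the two displayed identities and the commutativity of $\Z[A]$, is what makes this work.
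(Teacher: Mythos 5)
Your proposal is correct and follows essentially the same route as the paper's proof: both reduce the converse implication to showing that $I$ is generated as an ideal by $\{\tau_g-1 \mid g\in G\}$ via the subgroup $H=\{a\in A\mid \tau_a-1\in J\}$, and then conclude that $I^{m+1}$ is generated by the products over $G$, all of which annihilate $f$. The only cosmetic differences are your expanded form of the addition identity and phrasing the last step via linearity of the module action rather than via the annihilator ideal $\Ann(f)$.
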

\begin{proof}
  Let $I:=\AugZA$. We first observe that $I$ is
  generated, as an ideal of $\Z[A]$, by
  $\{ \tau_{g} - 1 \mid g \in G \}$.
  To show this, let $J$ be the ideal generated by
  $\{ \tau_{g} - 1 \mid g \in G \}$. Obviously, $J \subseteq I$.
  For the other inclusion, we note that
  the set $H := \{ h \in A \mid \tau_h - 1 \in J\}$ is a subgroup
  of $A$ because $\tau_{h_1 + h_2} - 1 =
  \tau_{h_1} (\tau_{h_2} - 1) + (\tau_{h_1} - 1)$ and hence,
  if $\tau_{h_1} - 1$ and $\tau_{h_2} - 1$ are both
  elements of $J$, then so is $\tau_{h_1 + h_2} - 1$.
  Furthermore, if $\tau_h - 1 \in J$, then
  $(-\tau_{-h}) \cdot (\tau_h - 1) \in J$, and thus
  $\tau_{-h} - 1 \in J$. Thus $H$ is indeed a subgroup of $A$. 
  Now by the definition of $J$, we have $G \subseteq H$,
  and since $A$ is generated by $G$, $A = H$.
  Thus $\{\tau_a - 1 \mid a \in A \} \subseteq J$,
  which implies $I \subseteq J$.
  We will now prove the equivalence of \eqref{it:gen1} and \eqref{it:gen2}.

  \eqref{it:gen1}$\Rightarrow$\eqref{it:gen2}:
  Clearly, $\prod_{i=1}^{m+1} (\tau_{g_i}-1) \in I^{m+1}$.
  By \eqref{it:gen1}, $I^{m+1} * f = 0$.

  \eqref{it:gen2}$\Rightarrow$\eqref{it:gen1}:
  Since $I$ is generated as an ideal by
  $\{ \tau_g - 1 \mid g \in G \}$, its power $I^{m+1}$
  is generated as an ideal by
  $\{ \prod_{i = 1}^{m+1} (\tau_{g_i} - 1) \mid g_1, \ldots, g_{m+1} \in G \}$.
  By~\eqref{it:gen2}, all these elements lie in the ideal
  $\Ann (f) := \{ r \in \Z[A] \mid r*f = 0\}$ of $\Z[A]$, and
  therefore $I^{m+1} \subseteq \Ann (f)$. Hence
  $I^{m+1} * f = 0$, which implies~\eqref{it:gen1}.
\end{proof}

\section{Elementary properties of the functional degree}

In this section, we list some properties of the functional degree
that follow quite immediately from its definition.
\begin{lem} \label{lem:dprop}
  Let $\algop{A}{+}$ and $\algop{B}{+}$ be abelian groups,
    let $f : A \to B$, let $\sigma \in \AugZA$, and let $a \in A$.
  Then we have:
  \begin{enumerate}
      \item \label{it:dp1} $\fundeg (\tau_a * f) = \fundeg (f)$.
      \item \label{it:dp2} If  $\fundeg (f) > 0$, then
        \[
        \fundeg (f) \ge 1 + \fundeg (\sigma * f).
        \]
      \item \label{it:dp2a} %
        If $\fundeg (f) > 0$, then
        \[
        \fundeg (f) = 1 + \sup \, (\{ \fundeg ( (\tau_b - 1) * f ) \mid b \in A  \}).
        \]
      \item \label{it:dp3} $\fundeg (f) = 0$ if and only if $f$ is a constant function.
      \item \label{it:dphom} If $f$ is a group homomorphism, then
             $\fundeg (f) \le 1$.
  \end{enumerate}
   With the convention $1 + \infty = \infty$, items~\eqref{it:dp2} and~\eqref{it:dp2a}
  are  also valid if $f$ has functional degree $\infty$.
\end{lem}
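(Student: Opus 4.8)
The plan is to reduce each of the five claims to a statement about how the ideal $I := \AugZA$ acts on $f$, using the two tools already available: the criterion $\fundeg(g) \le n \iff I^{n+1} * g = 0$, and the fact (from the proof of Lemma~\ref{lem:gen}) that $\Ann(f) := \{r \in \Z[A] \mid r * f = 0\}$ is an ideal of $\Z[A]$. For item~\eqref{it:dp1}, I would use that $*$ is a module action, so that $I^{n+1} * (\tau_a * f) = (I^{n+1}\tau_a) * f$; since $\tau_a$ is a unit of $\Z[A]$ with inverse $\tau_{-a}$, multiplication by $\tau_a$ carries the ideal $I^{n+1}$ onto itself, giving $I^{n+1} * (\tau_a * f) = I^{n+1} * f$ and hence equal degrees. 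For item~\eqref{it:dp2}, writing $d := \fundeg(f)$ finite and positive, the observation is that $\sigma \in I$ forces $I^{d}\sigma \subseteq I^{d+1}$, so $I^{d} * (\sigma * f) = (I^{d}\sigma) * f \subseteq I^{d+1} * f = 0$; by the criterion this says $\fundeg(\sigma * f) \le d-1$. The case $d = \infty$ is immediate from the convention $1 + \infty = \infty$.

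Items~\eqref{it:dp3} and~\eqref{it:dphom} unwind directly. For~\eqref{it:dp3}, $\fundeg(f) = 0$ holds iff $I * f = 0$; as $I$ is generated by $\{\tau_a - 1 \mid a \in A\}$ and $\Ann(f)$ is an ideal, this is equivalent to $(\tau_a - 1) * f = 0$ for every $a \in A$, i.e. to $f(x+a) = f(x)$ for all $x, a$, i.e. to $f$ being constant. For~\eqref{it:dphom}, I would compute $((\tau_a - 1)(\tau_b - 1)) * f$ at an arbitrary $x$, obtaining $f(x+a+b) - f(x+a) - f(x+b) + f(x)$, which vanishes identically when $f$ is a homomorphism; since such products generate $I^2$ and $\Ann(f)$ is an ideal, $I^2 * f = 0$, whence $\fundeg(f) \le 1$.

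The substantive case, and the one I expect to be the main obstacle, is item~\eqref{it:dp2a}. Put $s := \sup\{\fundeg((\tau_b - 1) * f) \mid b \in A\}$. The inequality $\fundeg(f) \ge 1 + s$ follows from item~\eqref{it:dp2} applied with $\sigma = \tau_b - 1$, taking the supremum over $b$. For the reverse inequality $\fundeg(f) \le 1 + s$ I would argue through the criterion in the case $s$ finite: each generator $\prod_{i=1}^{s+2}(\tau_{a_i} - 1)$ of $I^{s+2}$ factors as $(\prod_{i=1}^{s+1}(\tau_{a_i} - 1)) \cdot (\tau_{a_{s+2}} - 1)$, so applying it to $f$ lands in $I^{s+1} * ((\tau_{a_{s+2}} - 1) * f)$, which is $0$ because $\fundeg((\tau_{a_{s+2}} - 1) * f) \le s$; as $\Ann(f)$ is an ideal containing every such generator, $I^{s+2} * f = 0$ and $\fundeg(f) \le s+1$. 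Combining the two inequalities yields the claimed equality.

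The care needed is almost entirely in the bookkeeping around the infinite and degenerate cases. The hypothesis $\fundeg(f) > 0$ is exactly what excludes the constant case, where the formula would falsely read $0 = 1 + 0$ (since every $(\tau_b - 1) * f$ is then the zero function, of degree $0$). When $s = \infty$ the upper bound is vacuous, and when $\fundeg(f) = \infty$ one checks, by running the same factoring argument for every exponent, that $s$ must also be $\infty$; the convention $1 + \infty = \infty$ then makes both inequalities read correctly. Consistently threading these conventions through both directions of~\eqref{it:dp2a} is the part I would be most careful to state precisely.
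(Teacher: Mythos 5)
Your proposal is correct and follows essentially the same route as the paper: items (1)--(3) are argued identically, and your $\le$ direction of (2a) is just the contrapositive packaging of the paper's factoring argument (the paper extracts a $b$ with $\fundeg((\tau_b-1)*f)\ge n-1$ from a nonvanishing product, while you bound $I^{s+2}*f$ by peeling off the last factor). The only cosmetic difference is in (5), where you verify $I^2*f=0$ by a direct two-factor computation via Lemma~\ref{lem:gen}, whereas the paper notes that each $(\tau_a-1)*f$ is constant and invokes (2a) and (3); both are sound.
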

\begin{proof}
   Let $I := \AugZA$.
  For~\eqref{it:dp1}, we observe that $\tau_a$ is an invertible
  element in the ring $\Z [A]$. The ring $\Z[A]$ is commutative. Therefore,
  for each ideal $J$ of $\Z[A]$, we have
  we have $J * f = 0$ if and only if $J * (\tau_a * f) = 0$.
  Hence $f$ and $\tau_a * f$ have the
same functional degree.
This completes the proof of~\eqref{it:dp1}.

For proving~\eqref{it:dp2}, we first observe that the statment is obvious if
$\fundeg (f) = \infty$. Hence we assume $\fundeg (f) \in \N$ and
set $n := \fundeg (f)$.
  We show that $I^n * (\sigma * f) = 0$. 
To this end, we observe that for all $r \in I^n$,
$r *  (\sigma * f) =  (r \cdot \sigma) *  f$. Since
$r \cdot \sigma \in I^{n+1}$, we have $(r \cdot \sigma)  * f = 0$.
From the definition of the functional degree, we see that
$I^n * (\sigma * f) = 0$ implies that $n - 1 \ge \fundeg (\sigma * f)$,
and therefore $\fundeg (f) \ge 1 + \fundeg (\sigma*f)$.

\eqref{it:dp2a}
From the previous item, we obtain that for every $b \in A$,
\[
\fundeg (f) \ge 1 + \fundeg ((\tau_b - 1) * f).
\]
This implies $\fundeg (f) \ge 1 + \sup\,  (\{ \fundeg ( (\tau_b - 1) * f ) \mid b \in A  \})$.

For proving $\le$, we first show that for every $n \in \N$ with
$n \le \fundeg (f)$ there exists $b \in A$ such that
$n-1 \le \fundeg ( (\tau_b - 1) * f)$. For this purpose,
let  $n \in \N$ be such that $n \le \fundeg (f)$. Then there are
$a_1, \ldots, a_n \in A$ such that
$(\prod_{i=1}^n (\tau_{a_i} - 1)) * f \neq 0$: If this product is $0$
for all $a_1, \ldots, a_n$, then $I^n * f = 0$, contradicting $\fundeg (f) = n$.
Take $b := a_n$. Hence $I^{n-1} * ((\tau_b - 1) * f) \neq 0$, and therefore
$\fundeg ((\tau_b - 1) * f) \ge n - 1$. Hence there exists a $b \in A$ with the
required property.

Therefore, if $\fundeg (f) < \infty$, then taking $n := \fundeg(f)$, we obtain
$\sup \, ( \{ \fundeg ((\tau_b - 1) * f)  \mid b \in A \}) \ge \fundeg (f) - 1$.

In the case $\fundeg (f) = \infty$, we obtain a sequence $(b_n)_{n \in \N}$ from $A$
such that $\fundeg ( (\tau_{b_n} - 1) * f) \ge n - 1$. Hence
$\sup \, (\{ \fundeg ( (\tau_b - 1) * f ) \mid b \in A  \}) = \infty$.

This completes the proof of the $\le$-inequality of~\eqref{it:dp2a}.

\eqref{it:dp3}
For the ``if''-direction, we see that for all $a \in A$,  $(\tau_a - 1) * f = 0$ if $f$ is a constant function. Therefore $I *f = 0$ and thus $\fundeg (f) = 0$. 
For the ``only if''-direction, we assume that $\fundeg (f) = 0$. Now let $x \in A$.
Then $((\tau_{-x} - 1) * f) \, (x) = 0$, hence
$f(x) = f(0)$. Thus $f$ is constant.

\eqref{it:dphom} We assume that $f$ is a group homomorphism, and we let $a, x \in A$. Then
$(\tau_a - 1) * f \, (x)  = f(x+a) - f(x) = f(x) + f (a) - f(x)  = f(a)$, and therefore
$(\tau_a - 1) * f$ is a constant function.
By item~\eqref{it:dp3}, $\fundeg ( (\tau_a - 1) * f ) = 0$. Now by~\eqref{it:dp2a},
$\fundeg (f) \in \{0,1\}$.

\end{proof}

\begin{lem}[Addition]   \label{lem:aprop}
  Let $\algop{A}{+}$ and $\algop{B}{+}$ be abelian groups, and let
  $f, g : A \to B$.
  Then we have:
  \begin{enumerate}
  \item \label{it:d1}
        $\fundeg (f + g) \le \max \, (\fundeg (f), \fundeg (g))$.
  \item \label{it:d2}
    If $\fundeg (f) > \fundeg (g)$, then $\fundeg (f + g) = \fundeg (f)$.
  \end{enumerate}
\end{lem}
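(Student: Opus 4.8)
The plan is to use that the operation $*$ makes $B^A$ a module over the commutative ring $\Z[A]$: directly from the definition $\tau_a * (f+g) = \tau_a * f + \tau_a * g$, and extending $\Z$-linearly in the coefficients gives $r * (f+g) = r * f + r * g$ for every $r \in \Z[A]$. Consequently, for each $h \in B^A$ the set $\Ann(h) := \{ r \in \Z[A] \mid r * h = 0 \}$ is an ideal of $\Z[A]$, and the functional degree is governed by the largest power of $I := \AugZA$ contained in $\Ann(h)$. Both statements then reduce to the standard module-theoretic facts that $\Ann(f) \cap \Ann(g) \subseteq \Ann(f+g)$ and that, when the two degrees differ, the larger one is forced.

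For~\eqref{it:d1}, I would set $m := \max(\fundeg(f), \fundeg(g))$; if $m = \infty$ there is nothing to prove, so assume $m \in \N_0$. Since $m + 1 \ge \fundeg(f) + 1$, we have $I^{m+1} \subseteq I^{\fundeg(f)+1}$, and $I^{\fundeg(f)+1} * f = 0$ by definition, so $I^{m+1} * f = 0$; symmetrically $I^{m+1} * g = 0$. By the linearity of $*$ recorded above, every $r \in I^{m+1}$ then satisfies $r * (f+g) = r*f + r*g = 0$, i.e.\ $I^{m+1} \subseteq \Ann(f+g)$, whence $\fundeg(f+g) \le m$.

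For~\eqref{it:d2}, the $\le$-inequality is immediate from~\eqref{it:d1}, since $\max(\fundeg(f), \fundeg(g)) = \fundeg(f)$ under the hypothesis $\fundeg(f) > \fundeg(g)$. For the reverse inequality I would use the symmetric decomposition $f = (f+g) + (-g)$ together with the observation that $\fundeg(-g) = \fundeg(g)$, which holds because $r * (-g) = -(r * g)$ and hence $\Ann(-g) = \Ann(g)$. Applying~\eqref{it:d1} to this decomposition gives $\fundeg(f) \le \max(\fundeg(f+g), \fundeg(g))$. If we had $\fundeg(f+g) < \fundeg(f)$, then both $\fundeg(f+g)$ and $\fundeg(g)$ would be strictly below $\fundeg(f)$, making the right-hand side strictly below $\fundeg(f)$, a contradiction; hence $\fundeg(f+g) \ge \fundeg(f)$ and equality holds.

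The only points needing care, rather than a genuine obstacle, are the bookkeeping with the value $\infty$ and the direction of the inclusions $I^{k} \supseteq I^{l}$ for $k \le l$. In~\eqref{it:d2} the hypothesis $\fundeg(f) > \fundeg(g)$ forces $\fundeg(g)$ to be finite, so the contradiction argument also covers the case $\fundeg(f) = \infty$, where it yields $\fundeg(f+g) = \infty$. I expect no essential difficulty beyond making the $\Z[A]$-module structure explicit, which is precisely what reduces both claims to routine facts about annihilator ideals.
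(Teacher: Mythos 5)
Your proof is correct. Part~\eqref{it:d1} is essentially identical to the paper's argument: both take $m$ to be the maximum, note the trivial infinite case, and use the linearity $r*(f+g)=r*f+r*g$ to conclude $I^{m+1}*(f+g)=0$. For part~\eqref{it:d2} you take a genuinely different route. The paper argues directly: it picks a witness $i\in I^{n}$ (with $n:=\fundeg(f)$) satisfying $i*f\neq 0$, observes that $n>\fundeg(g)$ forces $i*g=0$, and concludes $i*(f+g)=i*f\neq 0$; the infinite case is then handled by a separate sequence of witnesses $i_m$. You instead exploit the ultrametric-style trick of decomposing $f=(f+g)+(-g)$, noting $\fundeg(-g)=\fundeg(g)$, and feeding this back into part~\eqref{it:d1} to force $\fundeg(f+g)\ge\fundeg(f)$ by contradiction. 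Your version buys uniformity --- the finite and infinite cases of $\fundeg(f)$ are handled by one argument, since the hypothesis already forces $\fundeg(g)<\infty$ --- at the cost of being slightly less explicit: the paper's proof actually exhibits an element of $I^{n}$ that does not annihilate $f+g$, which is the kind of concrete witness that gets reused elsewhere in the paper (e.g.\ in Lemma~\ref{lem:prodprop}). Both are complete and rigorous.
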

\begin{proof}
We set $I := \AugZA$.

\eqref{it:d1} Let $n := \max \, (\fundeg (f), \fundeg (g))$. We assume $n < \infty$ and let
$i \in I^{n+1}$. Then $i * (f + g) = i * f + i * g = 0$, and thus
$\fundeg (f + g) \le n$.

\eqref{it:d2}
Let $n := \fundeg (f)$. If $n < \infty$, then
there is $i \in I^n$ such that $i * f \neq 0$. Since $n > \fundeg (g)$,
we have $i * g = 0$, and thus $i * (f + g) = i * f + i * g \neq 0$.
Therefore $\fundeg (f + g) \ge n$. The converse inequality
follows from item~\eqref{it:d1}.
If $\fundeg (f) = \infty$, then for every $m > \fundeg (g)$, we have
an $i_m \in I^m$ such that $i_m * f \neq 0$. Then
$i_m * (f + g) = i_m * f + i_m * g = i_m * f + 0 \neq 0$, and therefore
$\fundeg (f + g) \ge m$. Hence $\fundeg (f + g) = \infty$. 
\end{proof}
    \begin{lem}[Restriction of a function] \label{lem:rest}
      Let $\algop{A}{+}$ and $\algop{G}{+}$ be abelian groups, let
      $f : A \to G$, let $B$ be a subgroup of $A$, and let
      $f|_B$ be the restriction of $f$ to $B$.
      Then $\fundeg (f|_B) \le \fundeg (f)$. 
    \end{lem}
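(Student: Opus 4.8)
The plan is to reduce the statement to the generator-based criterion of Lemma~\ref{lem:gen}, applied to the subgroup $B$ viewed as a generating set of itself. If $\fundeg (f) = \infty$ there is nothing to prove, so I would assume $m := \fundeg (f) < \infty$ and aim to show $\fundeg (f|_B) \le m$. By Lemma~\ref{lem:gen}, taking the generating set of $B$ to be $B$ itself, this amounts to verifying that
\[
\Bigl(\prod_{i=1}^{m+1}(\tau_{b_i}-1)\Bigr) * (f|_B) = 0
\]
for all $b_1, \ldots, b_{m+1} \in B$, where now the $\tau$'s live in $\Z[B]$ and the action is that of $\Aug(\Z[B])$ on functions $B \to G$.

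The key observation is that for $x \in B$, the value of $\bigl(\prod_{i=1}^{m+1}(\tau_{b_i}-1)\bigr) * (f|_B)$ at $x$ is an integer combination of the values $f|_B(x + \sum_{i} \epsilon_i b_i)$ with $\epsilon_i \in \{0,1\}$. Since $B$ is a subgroup and $x, b_1, \ldots, b_{m+1} \in B$, every argument $x + \sum_{i} \epsilon_i b_i$ again lies in $B$, so $f|_B$ and $f$ agree at each of these points. Consequently, for every $x \in B$,
\[
\Bigl(\prod_{i=1}^{m+1}(\tau_{b_i}-1)\Bigr) * (f|_B)\,(x) = \Bigl(\prod_{i=1}^{m+1}(\tau_{b_i}-1)\Bigr) * f \,(x),
\]
where on the right-hand side the product is read inside $\Z[A]$ and acts on $f \colon A \to G$.

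It then remains to note that the right-hand side vanishes. Indeed, each $b_i$ lies in $A$, so $\tau_{b_i} - 1 \in \Aug(\Z[A])$ and hence $\prod_{i=1}^{m+1}(\tau_{b_i}-1) \in (\Aug(\Z[A]))^{m+1}$. Since $\fundeg (f) = m$, we have $(\Aug(\Z[A]))^{m+1} * f = 0$, so this product annihilates $f$ as a function on all of $A$; in particular its value at every $x \in B$ is $0$. Combining the two displays yields $\bigl(\prod_{i=1}^{m+1}(\tau_{b_i}-1)\bigr) * (f|_B) = 0$ for all $b_1, \ldots, b_{m+1} \in B$, and Lemma~\ref{lem:gen} then gives $\fundeg (f|_B) \le m$, as desired.

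The only point requiring care --- and the main, rather mild, obstacle --- is the bookkeeping that the same element $\prod_{i=1}^{m+1}(\tau_{b_i}-1)$ may be regarded both in $\Z[B]$, acting on $f|_B$, and in $\Z[A]$, acting on $f$, and that these two actions produce identical values on $B$ \emph{precisely because $B$ is closed under addition}. Everything else is a direct invocation of the already-established generator criterion.
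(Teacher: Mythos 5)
Your proof is correct and rests on the same key observation as the paper's: since $B$ is closed under addition, the action of an element of $\Z[B]$ (viewed inside $\Z[A]$) on $f$, restricted to $B$, coincides with its action on $f|_B$, so the annihilation of $f$ by $(\Aug(\Z[A]))^{m+1}$ transfers to $f|_B$. The only cosmetic difference is that you route the conclusion through the generator criterion of Lemma~\ref{lem:gen} with generating set $B$, whereas the paper applies the identity $(r*f)|_B = r*(f|_B)$ directly to an arbitrary element of $(\Aug(\Z[B]))^{m+1}$; both are equally valid.
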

    \emph{Proof:}
    Suppose that $\fundeg (f) \in \N_0$, and let
    $m := \fundeg (f)$. Let $i \in \Aug (\Z[B])^{m+1}$.
    Seeing $\Z[B]$ as a subring of $\Z[A]$, we observe that
    $i \in \AugZA^{m+1}$. Hence $i * f = 0$.
    For every $r \in \Z[B]$, we have $(r * f)|_B =
    r * (f|_B)$.
    Thus $i * (f|_B) = (i * f)|_B = 0$. 
     Therefore, $\fundeg (f|_B) \le m$. \qed

     \begin{lem}[Combination of functions] \label{lem:combination}
       Let $\algop{A}{+}$, $\algop{B}{+}$ and $\algop{C}{+}$ be
       abelian groups, let $f: A \to B$, and let $g : A \to C$. We define
       a function $h : A \to B \times C$ by $h (a) = (f(a), g(a))$ for all $a \in A$.
       Then $\fundeg (h) = \max \, (\fundeg(f), \fundeg (g))$.
     \end{lem}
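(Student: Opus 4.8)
The plan is to reduce the claim to the fact that the module operation of $\Z[A]$ on functions with values in a direct product is performed coordinatewise. Concretely, writing $r = \sum_{a \in A} z_a \tau_a \in \Z[A]$, I would first verify straight from the definition of $*$ that for every $x \in A$
\[
(r * h)(x) = \Bigl( (r * f)(x),\ (r * g)(x) \Bigr),
\]
which is immediate because addition in $B \times C$ is computed componentwise. Thus $r * h$ is again the combination (in the sense of the lemma) of $r * f$ and $r * g$.

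From this I would extract the key algebraic fact: a function into $B \times C$ is the zero function precisely when both of its coordinate functions vanish, so $r * h = 0$ if and only if $r * f = 0$ \emph{and} $r * g = 0$. In terms of the annihilator ideals $\Ann(p) := \{ r \in \Z[A] \mid r * p = 0 \}$ (as in the proof of Lemma~\ref{lem:gen}), this says exactly that $\Ann(h) = \Ann(f) \cap \Ann(g)$.

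With $I := \AugZA$, the definition of the functional degree then turns into a statement about powers of $I$. For an arbitrary $m \in \N_0$ I would run the chain of equivalences $\fundeg(h) \le m \iff I^{m+1} \subseteq \Ann(h) \iff I^{m+1} \subseteq \Ann(f) \text{ and } I^{m+1} \subseteq \Ann(g) \iff \fundeg(f) \le m \text{ and } \fundeg(g) \le m$, where the middle equivalence is precisely the intersection formula above. The final condition is just $\max(\fundeg(f),\fundeg(g)) \le m$, so $\fundeg(h) \le m$ holds if and only if $\max(\fundeg(f),\fundeg(g)) \le m$, for every $m$. Taking the least such $m$ (with the convention that the minimum of the empty set is $\infty$) forces $\fundeg(h) = \max(\fundeg(f),\fundeg(g))$.

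The argument is essentially bookkeeping, and I do not anticipate a genuine obstacle. The only point needing a little care is the case of infinite functional degree, which is why I would phrase everything through the uniform criterion ``$\fundeg(\cdot) \le m$ for all $m \in \N_0$'' rather than through a chosen witness realizing the exact degree; this handles the finite and infinite cases simultaneously.
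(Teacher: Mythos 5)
Your proposal is correct and follows essentially the same route as the paper: both observe that $\Z[A]$ acts coordinatewise, so $r*h=0$ if and only if $r*f=0$ and $r*g=0$, and then read off the degree formula. Your extra bookkeeping via annihilator ideals and the uniform criterion ``$\fundeg(\cdot)\le m$'' is just a more explicit writing-out of the same argument.
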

     \begin{proof}
       For every $r \in \Z[A]$, we have
       $(r * h) \, (a) = ((r * f) \, (a), (r * g) \, (a))$. Hence for every
       $r \in \Z[A]$, $r * h = 0$ if and only if both $r * f = 0$ and $r * g = 0$,
       which implies the result.
     \end{proof}

\section{The degree of composed functions}
The aim of this section is to prove that the functional degree of a composition
$g \circ f$ is at most the product of the functional degrees of $f$ and $g$.
For this purpose, we characterize the functional degree of a function
by certain ``linearity'' properties.
Similar linearity properties have been used
in \cite{Ma:TSMP} for defining the degree of finitary operations in an
algebra with a Mal'cev term.
For a set $X$ and $n \in \N_0$, we write $\Pot (X)$ for the power set of $X$ and
$\Pot_{\le n} (X)$ for the set
$\{ Y \in \Pot (X) \, : \, |Y| \le n\}$. We write $Y \subset X$ for ($Y \subseteq X$ and $Y \neq X$).
\begin{lem}[Characterization of the degree] \label{lem:char}
  Let $\algop{A}{+}$ and $\algop{B}{+}$ be abelian groups, let $f: A \to B$, and let
  $m \in \N_0$. Then the following are equivalent:
  \begin{enumerate}
  \item \label{it:c1} $\fundeg (f) \le m$.
  \item \label{it:c1a} For every $k > m$, we have
    \begin{equation} \label{eq:bigsum}
      f (\sum_{i=1}^k x_i) = \sum_{S \subset \ul{k}} (-1)^{k - |S| + 1} f (\sum_{j \in S} x_j).
    \end{equation}  
  \item \label{it:c2} For every $k \in \N$, there exists a family $K = \langle \alpha_S \mid S \in \Pot_{\le m} (\ul{k}) \rangle$
    of integers 
    such that for all $x_1, \ldots, x_k \in A$, we have
    \[
    f (\sum_{i=1}^k x_i) = \sum_{S \in \Pot_{\le m} (\ul{k})}  \alpha_S f (\sum_{i \in S} x_i).
    \]
   \item \label{it:c3}
     There exist functions $g_1, \ldots, g_{m+1} : A^{m+1} \to B$ such that for all $x_1, \ldots, x_{m+1} \in A$, we have
     \begin{equation} \label{eq:c3eq}
     f (\sum_{i = 1}^{m+1} x_i) = \sum_{i=1}^{m+1} g_i (x_1, \ldots, x_{m+1}),
     \end{equation}
     and for each $i \in \ul{m+1}$, the function $g_i$ does not depend on its $i$\,th argument.
  \end{enumerate}
\end{lem}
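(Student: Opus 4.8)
The plan is to establish the cycle of implications \eqref{it:c1}$\Rightarrow$\eqref{it:c1a}$\Rightarrow$\eqref{it:c2}$\Rightarrow$\eqref{it:c3}$\Rightarrow$\eqref{it:c1}, and to write $I := \AugZA$ throughout. The computational backbone is the expansion, valid in $\Z[A]$ for all $x_1,\dots,x_k \in A$,
\[
\Bigl(\prod_{i=1}^{k}(\tau_{x_i}-1)\Bigr) * f = \sum_{S \subseteq \ul{k}} (-1)^{k-|S|}\, \tau_{\sum_{i\in S} x_i} * f,
\]
whose value at $x \in A$ is $\sum_{S\subseteq\ul{k}}(-1)^{k-|S|} f(x+\sum_{i\in S} x_i)$. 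After moving the $S=\ul{k}$ summand of the equation in \eqref{it:c1a} to the left-hand side, that equation is precisely the assertion that this value vanishes at $x=0$. Hence \eqref{it:c1}$\Rightarrow$\eqref{it:c1a} is immediate: if $\fundeg(f)\le m$ and $k>m$, then $\prod_{i=1}^{k}(\tau_{x_i}-1)\in I^{k}\subseteq I^{m+1}$, so the left-hand function is $0$, in particular at $0$.

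For \eqref{it:c1a}$\Rightarrow$\eqref{it:c2} I would induct on $k$. For $k\le m$ the set $\ul{k}$ itself lies in $\Pot_{\le m}(\ul{k})$, so the trivial family suffices; for $k=m+1$ the equation in \eqref{it:c1a} is already an instance of \eqref{it:c2}, since every proper subset of $\ul{m+1}$ has at most $m$ elements. For $k>m+1$, \eqref{it:c1a} rewrites $f(\sum_{i=1}^{k}x_i)$ as an integer combination of terms $f(\sum_{j\in S}x_j)$ with $S\subsetneq\ul{k}$; applying the induction hypothesis to each such term, viewed as $f$ of a sum of $|S|<k$ variables, re-expresses it through subsets of size at most $m$, and collecting coefficients yields the required family $K$.

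For \eqref{it:c2}$\Rightarrow$\eqref{it:c3} I would take the family $K$ provided by \eqref{it:c2} at $k=m+1$. Each $S\in\Pot_{\le m}(\ul{m+1})$ has $|S|\le m$, hence $S\neq\ul{m+1}$, so I may choose an index $\iota(S)\in\ul{m+1}\setminus S$. Setting $g_i := \sum_{S:\iota(S)=i}\alpha_S\, f(\sum_{j\in S}x_j)$ distributes the full sum over $i\in\ul{m+1}$, and since every $S$ contributing to $g_i$ omits $i$, the function $g_i$ does not depend on its $i$\,th argument; their sum equals $f(\sum_{i=1}^{m+1}x_i)$ by \eqref{it:c2}.

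The substantive step is \eqref{it:c3}$\Rightarrow$\eqref{it:c1}, which I would handle with partial difference operators. Introduce $F\colon A^{m+1}\to B$, $F(x_1,\dots,x_{m+1}) := f(x_1+\cdots+x_{m+1})$, so that \eqref{it:c3} reads $F=\sum_{i=1}^{m+1}g_i$ with $g_i$ independent of the $i$\,th coordinate. For fixed $a_1,\dots,a_{m+1}\in A$ let $\Delta_j$ be the operator on functions $A^{m+1}\to B$ that replaces $x_j$ by $x_j+a_j$ and subtracts; the $\Delta_j$ pairwise commute. A direct expansion gives $(\Delta_1\cdots\Delta_{m+1}F)(x_1,\dots,x_{m+1}) = \sum_{S\subseteq\ul{m+1}}(-1)^{m+1-|S|}f(\sum_j x_j+\sum_{i\in S}a_i)$, which, choosing the $x_j$ with $\sum_j x_j = x$, equals the value at $x$ of $(\prod_{i=1}^{m+1}(\tau_{a_i}-1))*f$ by the backbone identity. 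On the other hand, $g_i$ does not depend on its $i$\,th argument, so $\Delta_i g_i=0$ and hence $\Delta_1\cdots\Delta_{m+1}g_i=0$; summing, $\Delta_1\cdots\Delta_{m+1}F=0$. Thus $(\prod_{i=1}^{m+1}(\tau_{a_i}-1))*f=0$ for all $a_1,\dots,a_{m+1}\in A$, and Lemma~\ref{lem:gen}, applied with $G=A$, yields $\fundeg(f)\le m$. I expect this final implication to be the main obstacle, the one delicate point being the bookkeeping that identifies the iterated partial difference of $F$ with the action of $\prod_{i=1}^{m+1}(\tau_{a_i}-1)$ on $f$; the remaining implications are routine rearrangements.
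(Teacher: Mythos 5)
Your proposal is correct. The cycle of implications and the first three steps coincide with the paper's proof: the same expansion of $\prod_{i=1}^{k}(\tau_{x_i}-1)$ over subsets for \eqref{it:c1}$\Rightarrow$\eqref{it:c1a}, the same induction on $k$ for \eqref{it:c1a}$\Rightarrow$\eqref{it:c2}, and the same partition of $\Pot_{\le m}(\ul{m+1})$ by a chosen omitted index for \eqref{it:c2}$\Rightarrow$\eqref{it:c3} (the paper fixes $\iota(S):=\min(\ul{m+1}\setminus S)$ where you allow an arbitrary choice). Where you genuinely diverge is the last implication \eqref{it:c3}$\Rightarrow$\eqref{it:c1}. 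The paper argues by induction on $m$: it applies a single difference $(\tau_b-1)$ to $f$, checks that the resulting function inherits a decomposition of type \eqref{it:c3} with $m$ summands, and concludes via Lemma~\ref{lem:dprop}\eqref{it:dp2a}. You instead apply all $m+1$ commuting partial difference operators $\Delta_1,\dots,\Delta_{m+1}$ at once to $F(x_1,\dots,x_{m+1})=f(\sum_i x_i)$: each $g_i$ is annihilated by $\Delta_i$ and hence by the full product, while the full product applied to $F$ and evaluated at a point with $\sum_j x_j=x$ gives exactly $\bigl(\prod_{i=1}^{m+1}(\tau_{a_i}-1)\bigr)*f$ at $x$; Lemma~\ref{lem:gen} with $G=A$ then finishes. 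Both arguments rest on the same observation (a function independent of its $i$\,th argument is killed by $\Delta_i$), but yours is direct and avoids the induction and the appeal to Lemma~\ref{lem:dprop}\eqref{it:dp2a}, at the modest cost of the bookkeeping identifying the iterated difference of $F$ with the module action on $f$ --- which you have carried out correctly.
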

\begin{proof}
  \eqref{it:c1}$\Rightarrow$\eqref{it:c1a}:
  Let $x_1, \ldots, x_k \in A$. Since $k > \fundeg (f)$, we have
  $((\prod_{i = 1}^k (\tau_{x_i} - 1)) * f) \, (0) = 0$.
  We have $\prod_{i=1}^k (\tau_{x_i} - 1) = \sum_{S \subseteq \ul{k}} (-1)^{k - |S|}  \prod_{i \in S} \tau_{x_i}$. Hence
  \[
  \begin{split}
    0 &= ((\prod_{i = 1}^k (\tau_{x_i} - 1)) * f) \, (0)  \\
    &= ((\sum_{S \subseteq \ul{k}} (-1)^{k - |S|}  \prod_{i \in S} \tau_{x_i}) * f) \, (0) \\
    &= \sum_{S \subseteq \ul{k}} (-1)^{k - |S|}  f (\sum_{i \in S} x_i) \\
    &= f (\sum_{i = 1}^k x_i) + \sum_{S \subset \ul{k}} (-1)^{k - |S|}  f (\sum_{i \in S} x_i),
  \end{split}
  \]
  which implies~\eqref{eq:bigsum}.
  
  \eqref{it:c1a}$\Rightarrow$\eqref{it:c2}: We proceed by induction on $k$.
    If $k \le m$, then we set $\alpha_{\ul{k}} = 1$ and $\alpha_S := 0$ for all subsets of $\ul{m}$ that
  are not equal to $\ul{k}$.
  Now assume $k > m$. For all $x_1, \ldots, x_k \in A$, we have
  \begin{equation*}
    f (\sum_{i=1}^k x_i) =\sum_{S \subset \ul{k}} (-1)^{k - |S| + 1} f (\sum_{j \in S} x_j).
  \end{equation*}
  We will now expand each $f (\sum_{j \in S} x_j)$ using the induction
  hypothesis.
  Since every proper subset of $\ul{k}$ has less than $k$ elements,
  the induction hypothesis yields for every $S \subset \ul{k}$ a family
  $K(S) := \langle \alpha^{(S)}_T  \mid T \in \Pot_{\le m} (\ul{k}) \rangle$ of
  integers
  such that
  \[
    f (\sum_{j \in S} x_j) = \sum_{T \in \Pot_{\le m} (\ul{k})}  \alpha^{(S)}_T f (\sum_{i \in T} x_i)
    \]
    for all $\vb{x} \in A^k$.
    Note that we may take $\alpha^{(S)}_T := 0$ for those $T \subseteq \ul{k}$
    with $T \not\subseteq S$.
    Hence
    \[
    \begin{split}
      \sum_{S \subset \ul{k}} (-1)^{k - |S| + 1} f (\sum_{j \in S} x_j) &=
      \sum_{S \subset \ul{k}} (-1)^{k - |S| + 1} \sum_{T \in \Pot_{\le m} (\ul{k})} \alpha^{(S)}_T f (\sum_{i \in T} x_i) \\
      &=
      \sum_{T \in \Pot_{\le m} (\ul{k})}
      (
      \sum_{S \subset \ul{k}} (-1)^{k - |S| + 1}  \alpha^{(S)}_T
      ) \,
      f (\sum_{i \in T} x_i)
    \end{split}
   \]  
    for all $\vb{x} \in A^k$, and therefore
    $\langle \alpha_T \mid T \in \Pot_{\le m} (\ul{k}) \rangle$ with
    $\alpha_T := \sum_{S \subset \ul{k}} (-1)^{k - |S| + 1}  \alpha^{(S)}_T$
   satisfies the required property, which completes the induction step. 

   \eqref{it:c2}$\Rightarrow$\eqref{it:c3}: 
   By~\eqref{it:c2}, we have a family $\langle \alpha_S \mid S \in \Pot_{\le m} (\ul{m+1})\rangle$
   of integers such
   that for all $\vb{x} \in A^k$,
   $f (\sum_{i = 1}^{m+1} x_i) = \sum_{S \in \Pot_{\le m} (\ul{m+1})} \alpha_S f (\sum_{i \in S} x_i)$,
   which is equal to
   \[
   \sum_{j=1}^{m+1} \sumd{
     S \in \Pot_{\le m} (\ul{m+1})}
     {\min (\ul{m+1} \setminus S) = j
   } \alpha_S f (\sum_{i \in S} x_i).
   \]
   Hence
   \[
     g_j (x_1, \ldots, x_{m+1}) := \sumd{
     S \in \Pot_{\le m} (\ul{m+1})}
     {\min (\ul{m+1} \setminus S) = j
     } \alpha_S f (\sum_{i \in S} x_i)
   \]   
   does not depend on its $j$\,th argument, and the $g_j$'s satisfy~\eqref{eq:c3eq}.

   \eqref{it:c3}$\Rightarrow$\eqref{it:c1}:
   We prove that for each $m \in \N_0$ and for each $f : A \to B$, the existence of such $g_1, \ldots, g_{m+1}$ implies $\fundeg (f) \le m$.
   We proceed by induction on $m$.
   For $m = 0$, we observe that the identity $f (x_1) = g_1 (x_1)$ with $g_1$ not depending on $x_1$ implies that
   $f$ is constant, and therefore of functional degree $0$ by
   Lemma~\ref{lem:dprop}\eqref{it:dp3}.
   For the induction step, let $m \in \N$ and assume
   that $f(\sum_{i=1}^{m+1} x_i) = \sum_{i=1}^{m+1} g_i(x_1, \ldots, x_{m+1})$,
      where $g_i$ does not depend on its $i$\,th argument.
   The induction hypothesis is that every $f'$ with
   $f' (\sum_{i=1}^{m} x_i) =  \sum_{i=1}^m h_i(x_1, \ldots, x_{m})$ and
   $h_i$ not depending on its $i$\,th argument satisfies $\fundeg (f') \le
   m-1$.
   We want to prove $\fundeg (f) \le m$.
   If $\fundeg (f) = 0$, there is nothing to prove, hence we may assume that
   $\fundeg (f) \in \N \cup \{\infty\}$.
   We will use Lemma~\ref{lem:dprop}\eqref{it:dp2a} to compute
   $\fundeg (f)$. To this end, we let $b \in A$ and estimate
   $\fundeg ( (\tau_b - 1) * f )$.
   For each $\vb{x} \in A^m$, we have
   \[
   \begin{split}
     ((\tau_b - 1) * f) \, (\sum_{i = 1}^m x_i)  &=
     f (\sum_{i = 1}^m x_i + b) - f (\sum_{i=1}^m x_i + 0) \\
     &=  \sum_{i=1}^{m+1} (g_i (x_1, \ldots, x_m, b) - g_i (x_1, \ldots, x_m, 0)).
   \end{split}
   \]
   Now for $i \in \ul{m}$  and $\vb{x} \in A^m$,
   we define $h_i (x_1, \ldots, x_m) := g_i (x_1, \ldots, x_m, b) - g_i (x_1, \ldots, x_m, 0)$.
   Since $g_{m+1}$ does not depend on its $(m+1)$\,th argument, we have
   \[
          ((\tau_b - 1) * f) \, (\sum_{i = 1}^m x_i)  =
                  \sum_{i=1}^{m} h_i (x_1, \ldots, x_m)
   \]
   for all $\vb{x} \in A^m$. The function $h_i$ does not depend
   on its $i$\,th argument. Therefore,  the induction hypothesis
   yields $\fundeg ( (\tau_b - 1) * f ) \le m-1$.
   Now by Lemma~\ref{lem:dprop}\eqref{it:dp2a}, 
   $\fundeg (f) \le m$.
   \end{proof}
The interplay of the degree with functional composition will be
central in our further development.
   \begin{thm}[Composition]  \label{thm:comp}
     Let $\algop{A}{+}, \algop{B}{+}, \algop{C}{+}$ be abelian groups, let
       $f : A \to B$ and $g : B \to C$ with $\fundeg (f) < \infty$ and $\fundeg(g) < \infty$.
       Then $\fundeg (g \circ f) \le \fundeg (g) \cdot \fundeg (f)$.
   \end{thm}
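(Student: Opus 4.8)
The plan is to exhibit a decomposition of $g \circ f$ of the type described in Lemma~\ref{lem:char}\eqref{it:c3}, obtained by expanding first $f$ and then $g$ with the help of Lemma~\ref{lem:char}\eqref{it:c2}. Write $d := \fundeg(f)$ and $e := \fundeg(g)$, both finite by assumption, and set $N := de + 1$. It suffices to produce functions $H_1, \ldots, H_N \colon A^N \to C$, with $H_i$ not depending on its $i$-th argument, such that $(g\circ f)(\sum_{i=1}^N x_i) = \sum_{i=1}^N H_i(\vb{x})$ for all $\vb{x} \in A^N$; then the equivalence of \eqref{it:c1} and \eqref{it:c3} in Lemma~\ref{lem:char} yields $\fundeg(g\circ f) \le N - 1 = de$.

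First I would apply Lemma~\ref{lem:char}\eqref{it:c2} to $f$ with $k = N$: there is a family $\langle \alpha_S \mid S \in \Pot_{\le d}(\ul{N})\rangle$ of integers such that, writing $x_S := \sum_{i\in S} x_i$, we have $f(\sum_{i=1}^N x_i) = \sum_{S\in\Pot_{\le d}(\ul{N})} \alpha_S\, f(x_S)$ for all $\vb{x}$. Next I would regard the finitely many elements $y_S := \alpha_S f(x_S)$, indexed by $S \in \Pot_{\le d}(\ul{N})$, as new arguments and apply Lemma~\ref{lem:char}\eqref{it:c2} to $g$: there is a family $\langle \beta_{\mathcal{T}} \mid \mathcal{T} \in \Pot_{\le e}(\Pot_{\le d}(\ul{N}))\rangle$ of integers with $g(\sum_S y_S) = \sum_{\mathcal{T}} \beta_{\mathcal{T}}\, g(\sum_{S\in\mathcal{T}} y_S)$. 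Substituting $f(\sum_i x_i) = \sum_S y_S$ yields
\[
(g\circ f)\Big(\sum_{i=1}^N x_i\Big) = \sum_{\mathcal{T} \in \Pot_{\le e}(\Pot_{\le d}(\ul{N}))} \beta_{\mathcal{T}}\, g\Big(\sum_{S\in\mathcal{T}}\alpha_S f(x_S)\Big)
\]
for all $\vb{x} \in A^N$.

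The decisive observation is then a counting one: the summand indexed by $\mathcal{T}$ depends on $\vb{x}$ only through the variables $x_i$ with $i \in U(\mathcal{T}) := \bigcup_{S\in\mathcal{T}} S$, and $|U(\mathcal{T})| \le \sum_{S\in\mathcal{T}}|S| \le e\cdot d < N$. Hence $\ul{N} \setminus U(\mathcal{T})$ is nonempty for every $\mathcal{T}$, so each summand fails to depend on at least one argument. I would therefore assign to $\mathcal{T}$ the index $i(\mathcal{T}) := \min(\ul{N} \setminus U(\mathcal{T}))$ and set $H_i(\vb{x}) := \sum_{\mathcal{T} \,:\, i(\mathcal{T}) = i} \beta_{\mathcal{T}}\, g(\sum_{S\in\mathcal{T}}\alpha_S f(x_S))$. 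By construction each $H_i$ is a sum of terms none of which depends on $x_i$, so $H_i$ does not depend on its $i$-th argument, and $\sum_{i=1}^N H_i(\vb{x}) = (g\circ f)(\sum_i x_i)$, completing the required decomposition.

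I expect the main obstacle to be purely one of bookkeeping rather than of substance: one must carefully invoke Lemma~\ref{lem:char}\eqref{it:c2} for $g$ with the re-indexed arguments $y_S$, noting that the integers $\beta_{\mathcal{T}}$ are furnished by the characterization independently of the particular values $y_S \in B$, so that the substitution $y_S = \alpha_S f(x_S)$ is legitimate for all $\vb{x}$ simultaneously. The only genuinely quantitative input is the strict inequality $|U(\mathcal{T})| \le de < N$, which is exactly why $N$ is chosen to be $de + 1$; this is what forces every summand to omit a variable and thereby turns the two-fold expansion into a decomposition of the form required by Lemma~\ref{lem:char}\eqref{it:c3}.
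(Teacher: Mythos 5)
Your proposal is correct and is essentially the paper's own proof: both expand $f$ and then $g$ via Lemma~\ref{lem:char}\eqref{it:c2} over $N = \fundeg(f)\fundeg(g)+1$ variables, observe that each resulting summand depends on at most $\fundeg(f)\cdot\fundeg(g)$ of the $x_i$, and collect the summands into the decomposition required by Lemma~\ref{lem:char}\eqref{it:c3}. Your explicit assignment $i(\mathcal{T}) := \min(\ul{N}\setminus U(\mathcal{T}))$ just spells out the ``collecting'' step that the paper leaves implicit.
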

   \begin{proof}
     Let $m := \fundeg (f)$, $n := \fundeg (g)$. We show that
     $g \circ f$ satisfies condition~\eqref{it:c3} of Lemma~\ref{lem:char}.
     To this end, let $\vb{x} \in A^{mn + 1}$. Then from Lemma~\ref{lem:char}\eqref{it:c2}, applied first to $f$ and then to $g$,  we obtain
     two families
     $\langle \alpha_S \mid  S \in \Pot_{\le m} (\ul{mn+1}) \rangle$ and
     $\langle \beta_T \mid T \in \Pot_{\le n} (\Pot_{\le m} (\ul{mn+1})) \rangle$
     such that
     \[
     \begin{split}
       g ( f ( \sum_{i=1}^{mn+1} x_i)) &=
       g ( \sum_{S \in \Pot_{\le m} (\ul{mn+1})} \alpha_S f ( \sum_{i \in S} x_i)) \\
       &=
       \sum_{T \in \Pot_{\le n}
         (\Pot_{\le m} (\ul{mn+1}))} \beta_T
       g ( \sum_{S \in T} \alpha_S f ( \sum_{i \in S} x_i)).
     \end{split}
     \]
     Now for each $T$, the corresponding summand can only depend
     on those $x_i$ with $i \in \bigcup \{S \mid S \in T\}$, and hence
     on at most $mn$ arguments.
     Therefore we can write $g \circ f \,\, (\sum_{i=1}^{mn+1}x_i)$ as a sum of functions
     each of which depends on at most $mn$ arguments. Collecting these functions
     into $mn + 1$ summands, we obtain the functions $g_1, \ldots, g_{mn+1}$ that satisfy
     condition~\eqref{it:c3} of Lemma~\ref{lem:char}, and hence
     $\fundeg (g \circ f) \le mn$.
   \end{proof}
   Using Theorem~\ref{thm:comp} and items~\eqref{it:c1a} and~\eqref{it:c3}
   of Lemma~\ref{lem:char}, one can prove that
the functional degree coincides with the the degree defined in \cite[(3.9)]{Ma:TSMP} when 
$A = G^l$ and $B = G$ for some abelian group $G$ and some $l \in \N$.

 \section{Partial degree}

   We will now define the \emph{partial} degree of a function $f : \prod_{j=1}^k A_j \to C$ in each of
   its variables.
   Intuitively, $\partdeg_i (f)$ is the maximal degree of those
   functions from $A_i \to C$ that we obtain by setting all arguments
   except for the $i$\,th one to constants. More formally,
   we proceed as follows:
   For $\vb{a} = \tup{a}{1}{k} \in  \prod_{j=1}^k  A_j$ and $i \in \ul{k}$, we define
   the function $E^{(i)}_{\vb{a}} : A_i \to \prod_{j=1}^k A_j$ by
   $E^{(i)}_{\vb{a}} (x) \, (j) := a_j$ if $j \neq i$, and
   $E^{(i)}_{\vb{a}} (x) \, (i) := x$ for all $x \in A_i$.
    Hence 
   $E^{(i)}_{\vb{a}} (x) = (a_1, \ldots, a_{i-1}, x, a_{i+1}, \ldots, a_k)$.
    \begin{de} [Partial degree]
       Let $k \in \N$, let $\langle A_j \mid j \in \ul{k} \rangle$ be a family of abelian groups, and let $C$ be
       an abelian group. Let $f : \prod_{j=1}^k A_j \to C$, and let $i \in \ul{k}$.
       Then the \emph{partial degree of $f$ in its $i$\,th argument},  $\partdeg_i (f)$, is defined
     by
     \[
       \partdeg_i (f) := \sup \, (\{ \fundeg (f \circ E^{(i)}_{\vb{a}}) \mid
                                  \vb{a} \in \prod_{j = 1}^k A_j \}).
     \]
   \end{de}

    \begin{thm} \label{thm:partialtotal}
       Let $k \in \N$, let $\langle A_j \mid j \in \ul{k} \rangle$ be a family of abelian groups,
      let $C$ be an abelian group, and let $r \in \ul{k}$.
      Let $f : \prod_{j=1}^k A_j \to C$.
    Then $\partdeg_r (f) \le \fundeg (f) \le \sum_{j=1}^k \partdeg_j (f)$. 
   \end{thm}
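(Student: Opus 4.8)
The plan is to prove the two inequalities separately; the lower bound $\partdeg_r(f)\le\fundeg(f)$ is quick via the composition theorem, while the upper bound $\fundeg(f)\le\sum_j\partdeg_j(f)$ is the substantive part.

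\textbf{The lower bound.} If $\fundeg(f)=\infty$ there is nothing to prove, so assume $\fundeg(f)<\infty$. I would observe that each $E^{(r)}_{\vb{a}}$ is affine: writing $\iota_r\colon A_r\to\prod_{j=1}^k A_j$ for the coordinate embedding (a group homomorphism), we have $E^{(r)}_{\vb{a}}(x)=\iota_r(x)+\vb{c}$ for all $x$, where $\vb{c}=(a_1,\ldots,a_{r-1},0,a_{r+1},\ldots,a_k)$ is constant. By Lemma~\ref{lem:dprop}\eqref{it:dphom} and Lemma~\ref{lem:aprop}\eqref{it:d1}, $\fundeg(E^{(r)}_{\vb{a}})\le\max(1,0)=1$. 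Theorem~\ref{thm:comp} then gives $\fundeg(f\circ E^{(r)}_{\vb{a}})\le\fundeg(f)\cdot 1$ for every $\vb{a}$, and taking the supremum over $\vb{a}$ yields $\partdeg_r(f)\le\fundeg(f)$.

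\textbf{A commutation identity.} For the upper bound I would first record the elementary fact that for every $j\in\ul{k}$, every $b\in A_j$, every $\vb{a}$, and every $f$,
\[
(\tau_b-1)*(f\circ E^{(j)}_{\vb{a}})=\bigl((\tau_{\iota_j(b)}-1)*f\bigr)\circ E^{(j)}_{\vb{a}},
\]
which follows from $E^{(j)}_{\vb{a}}(x+b)=E^{(j)}_{\vb{a}}(x)+\iota_j(b)$. Iterating and using commutativity of $\Z[\prod_j A_j]$, for $b_1,\ldots,b_t\in A_j$ one gets
\[
\Bigl(\prod_{s=1}^t(\tau_{b_s}-1)\Bigr)*(f\circ E^{(j)}_{\vb{a}})=\Bigl(\prod_{s=1}^t(\tau_{\iota_j(b_s)}-1)*f\Bigr)\circ E^{(j)}_{\vb{a}}.
\]

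\textbf{The upper bound.} Write $m_j:=\partdeg_j(f)$ and $N:=\sum_j m_j$; if some $m_j=\infty$ there is nothing to prove, so assume all $m_j$ finite. Let $I_j$ be the ideal of $\Z[\prod_j A_j]$ generated by $\{\tau_{\iota_j(b)}-1\mid b\in A_j\}$ and put $I:=\Aug(\Z[\prod_j A_j])$. The key step is to upgrade each partial degree bound to an ideal annihilation. Since $\partdeg_j(f)\le m_j$, Lemma~\ref{lem:gen} applied to $f\circ E^{(j)}_{\vb{a}}\colon A_j\to C$ makes the left-hand side of the iterated identity with $t=m_j+1$ vanish for all $\vb{a}$, hence so does the right-hand side; evaluating at $x\in A_j$ and letting $\vb{a},x$ range (so that $E^{(j)}_{\vb{a}}(x)$ sweeps out all of $\prod_j A_j$) shows $\prod_{s=1}^{m_j+1}(\tau_{\iota_j(b_s)}-1)*f=0$ for all $b_s\in A_j$. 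As these products generate $I_j^{m_j+1}$, we obtain $I_j^{m_j+1}\subseteq\Ann(f)$. Finally, since $\prod_j A_j$ is generated by $\bigcup_j\iota_j(A_j)$, the argument in the proof of Lemma~\ref{lem:gen} gives $I=I_1+\cdots+I_k$, so in the commutative ring $\Z[\prod_j A_j]$ we have $I^{N+1}=\sum_{e_1+\cdots+e_k=N+1}I_1^{e_1}\cdots I_k^{e_k}$. In each summand, $\sum_j e_j=N+1>\sum_j m_j$ forces $e_j\ge m_j+1$ for some $j$ by pigeonhole, whence $I_1^{e_1}\cdots I_k^{e_k}\subseteq I_j^{e_j}\subseteq I_j^{m_j+1}\subseteq\Ann(f)$. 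Thus $I^{N+1}*f=0$ and $\fundeg(f)\le N$, as desired. I expect the main obstacle to be precisely the upgrade step: the partial degree controls $f$ only along the axis-parallel ``lines'' $E^{(j)}_{\vb{a}}$, and one must argue that pointwise vanishing along all such lines, for all base points $\vb{a}$, amounts to the \emph{global} operator identity $I_j^{m_j+1}*f=0$; the commutation identity together with the fact that these lines cover $\prod_j A_j$ is exactly what bridges the gap, after which the ideal-power expansion and pigeonhole are routine.
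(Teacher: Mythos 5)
Your proof is correct, and both halves run on the same engine as the paper's: the lower bound via $\fundeg(E^{(r)}_{\vb{a}})\le 1$ plus Theorem~\ref{thm:comp}, and the upper bound by converting each partial-degree bound into the ideal inclusion $I_j^{m_j+1}\subseteq\Ann(f)$ for the coordinate ideals $I_j$, observing $I=I_1+\cdots+I_k$, and expanding $I^{N+1}$. The one genuine difference is organizational: the paper proves the upper bound only for $k=2$ (where the pigeonhole is just $K^{m+n+1}\subseteq\hat I^{m+1}+\hat J^{n+1}$) and then handles general $k$ by induction, regrouping $\prod_{j=1}^k A_j$ as $(\prod_{j=1}^{k-1}A_j)\times A_k$ and checking that the partial degrees of the regrouped function are controlled by those of $f$; you instead run the multinomial expansion $I^{N+1}\subseteq\sum_{e_1+\cdots+e_k=N+1}I_1^{e_1}\cdots I_k^{e_k}$ and a $k$-fold pigeonhole directly, which eliminates that induction and its bookkeeping entirely. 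Your ``upgrade step'' --- the commutation identity $(\tau_b-1)*(f\circ E^{(j)}_{\vb{a}})=((\tau_{\iota_j(b)}-1)*f)\circ E^{(j)}_{\vb{a}}$ together with the observation that the lines $E^{(j)}_{\vb{a}}$ cover the whole product --- is exactly the content of the paper's identity~\eqref{eq:phiNew} and its use in establishing~\eqref{eq:sf0}, so nothing is missing; your version is, if anything, the cleaner write-up of the general case.
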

    \begin{proof}
      For proving the first inequality $\partdeg_r (f) \le \fundeg (f)$,
      we fix $\vb{a} \in \prod_{j=1}^k A_j$ and estimate the
      degree of the function $f \circ E^{(r)}_{\vb{a}}$ from $A_r$ to $C$.
      Since the degree of a constant function is $0$ and the degree
      of the identity mapping on $A_r$ is at most $1$ (it is $0$ if
      $|A_r|  = 1$ and $1$ if $|A_r| > 1$), Lemma~\ref{lem:combination}
      yields that the function $E^{(r)}_{\vb{a}}$ is of degree at most $1$.
      Hence by Theorem~\ref{thm:comp},
      $\fundeg (f \circ E^{(r)}_{\vb{a}}) \le \fundeg (f) \cdot 1$,
      which completes the proof of $\partdeg_r (f) \le \fundeg (f)$.
    
      We will now prove 
      \begin{equation} \label{eq:f2}
        \fundeg (f) \le \sum_{j=1}^k \partdeg_j (f).
      \end{equation}  
      We first consider the case $k = 2$.
      To this end, let $A := A_1$, $B := A_2$, and $f : A \times B \to C$.
      Let $m := \partdeg_1 (f)$ and $n := \partdeg_2 (f)$.
      We assume $m < \infty$ and $n < \infty$. 
  Let $I := \Aug (\Z[A])$, $J := \Aug (\Z[B])$, $K := \Aug (\Z [A \times B])$.
  Let $\varphi_A : \Z[A] \to \Z[A \times B]$ be the ring homomorphism defined
  by $\varphi_A (\tau_a) := \tau_{(a,0)}$ for all $a \in A$, and let
  $\varphi_B : \Z[B] \to \Z [A \times B]$ be defined by $\varphi_B (\tau_b) := \tau_{(0,b)}$.
  We observe that
  \[
      \begin{array}{rcl}
        (\tau_{a} * (f \circ E^{(1)}_{(a',  b)}))  \, (x) & = & (\varphi_A (\tau_{a}) * f) \, (x, b) \text{ and } \\
        (\tau_{b} * (f \circ E^{(2)}_{(a,  b')}))  \, (y) & = & (\varphi_B (\tau_{b}) * f) \, (a, y)
      \end{array}
      \]
    for all $a, a', x \in A$ and $b, b', y \in B$.
    Therefore   for all elements $r = \sum_{a \in A} z_a \tau_a \in \Z[A]$
    and $s = \sum_{b \in B} z'_b \tau_b \in \Z[B]$,
    we have
    \begin{equation} \label{eq:phiNew}
      \begin{array}{rcl}
       (\varphi_A (r) * f) \, (x, b)  & = &  (r * (f \circ E^{(1)}_{(a, b)}))  \, (x)   \text{ and}  \\
       (\varphi_B (s) * f) \, (a, y)  & = &  (s * (f \circ E^{(2)}_{(a, b)}))  \, (y)
      \end{array}
    \end{equation}
      for all $x,a \in A$ and $y, b \in B$.

  Since $\varphi_A (I) \subseteq K$, we have $\varphi_A (I^l) \subseteq K^l$, and similarly
  $\varphi_B (J^l) \subseteq K^l$ for all $l \in \N_0$.
  We let $\hat{I} := \langle \varphi_A (I) \rangle$ be the ideal of $\Z[A \times B]$ that
  is generated by $\varphi_A (I)$, and similarly $\hat{J} := \langle \varphi_B (J) \rangle$
  denotes the ideal generated by $\varphi_B (J)$.
  We then have
  \begin{equation} \label{eq:K1}
    K \subseteq \hat{I} + \hat{J}.
  \end{equation}  
  To see this, we notice that $\tau_{(a,b)} - 1 =
  \tau_{(0,b)} \cdot (\tau_{(a,0)} - 1) + (\tau_{(0,b)} - 1)  =
  \tau_{(0,b)} \cdot \varphi_A (\tau_a - 1) + \varphi_B (\tau_b -1)$, completing the
  proof of~\eqref{eq:K1}.
  Now we are ready to prove $\fundeg (f) \le \partdeg_1 (f) + \partdeg_2 (f)$.
  Let $m := \partdeg_1 (f)$ and $n := \partdeg_2 (f)$, and assume that $m,n \in \N_0$.
  We prove $K^{m+n+1} * f = 0$.
  We know $K^{m+n+1} \subseteq (\hat{I} + \hat{J})^{m+n+1}
  \subseteq \sum_{i = 0}^{m+n+1} \hat{I}^i \hat{J}^{m+n+1-i}
  \subseteq \hat{I}^{m+1} + \hat{J}^{n+1}$.
  We first prove
  \begin{equation} \label{eq:I} 
    \hat{I}^{m+1} * f  = 0.
  \end{equation}
  The ideal $\hat{I}$ is generated as an ideal by
  $\varphi_A (I)$. Therefore, $\hat{I}^{m + 1}$ is generated
  (as an ideal of $\Z[A \times B]$) by
  $P := \{ \varphi_A (i) \mid i \in I^{m+1} \}$.
  Next, we show that  for each $\sigma \in I^{m+1}$, we have
  \begin{equation} \label{eq:sf0}
    \varphi_A (\sigma) * f  = 0.
  \end{equation}
  For this purpose, we observe that
  for all $x,y \in A$, we have
  $(\varphi_A (\sigma) * f) \,  (x, y) = (\sigma * (f \circ E^{(1)}_{(a,y)})) \,(x) $. Since
  $\sigma \in I^{m+1}$ and $\fundeg  (f \circ E^{(1)}_{(a,y)})  \le \partdeg_1 (f) = m$,
  we have $(\sigma * (f \circ E^{(1)}_{(a,y)})) \,( x ) = 0$, completing the proof of~\eqref{eq:sf0}.
   Since $L := \{\psi \in \Z[A \times B] \mid \psi * f \} = 0$ is an ideal
   of $\Z[A \times B]$ and since by~\eqref{eq:sf0}, $P \subseteq L$,
   we have  $\hat{I}^{m+1} \subseteq L$,
  which implies~\eqref{eq:I}.
  Similarly, $\hat{J}^{n+1} * f = 0$.
  Hence $\hat{I}^{m+1} \subseteq L$ and $\hat{J}^{n+1} \subseteq L$, and therefore
  $\hat{I}^{m+1} + \hat{J}^{n+1} \subseteq L$.
  Thus $K^{m+n+1} \subseteq L$, which implies $K^{m+n+1} * f = 0$.
  This proves that the functional degree of $f$ is at most $m + n$, which completes the proof of~\eqref{eq:f2} for the  case $k = 2$.

  For an arbitrary $k \in \N$, we proceed by induction on $k$. In the case
  $k = 1$, the assertion is obvious, and the case $k = 2$ has been treated above.
  Let us now assume $k > 2$. Let $A := \prod_{i = 1}^{k-1} A_i$ and $B := A_k$, and
  let $g : A \times B \to C$ be defined by
  \[
    g ( (x_1, \ldots, x_{k-1}),   x_k)) := f (x_1,\ldots, x_k)
    \]
    for all $(x_1, \ldots, x_{k-1}) \in A$ and $x_k \in B$.
    This allows us to view $f$ as a function in $k$ and $g$
    as a function in $2$ arguments.
    Let $\varphi$ be the isomorphism from $A \times B$
    to $\prod_{i=1}^k A_i$ defined by $\varphi ((x_1, \ldots, x_{k-1}), x_k)
    = (x_1, \ldots, x_k)$.
    Applying Theorem~\ref{thm:comp} and Lemma~\ref{lem:dprop}\eqref{it:dphom}
    to $f = g \circ  \varphi^{-1}$ and $g = f \circ \varphi$, we see
     $\fundeg (f) = \fundeg (g)$.
    The domain of $g$ is the product of the $2$ groups
  $A$ and $B$, and so we can apply the case above and obtain
  \[
  \fundeg (g) \le \partdeg_1 (g) + \partdeg_2 (g).
  \]
  We have
  \[
     \partdeg_1 (g) =
     \sup \, (\{ \fundeg (g \circ E^{(1)}_{(\vb{a}, b)}) \mid (\vb{a}, b) \in A \times B \}).
  \]
  Let $(\vb{a}, b) \in A \times B$. The function $g \circ E^{(1)}_{(\vb{a}, b)}$ is a function from
  $A$ to $C$, and it satisfies
  \[
  g \circ E^{(1)}_{(\vb{a}, b)} ( (x_1, \ldots, x_{k-1}) )
  =
  g ( (x_1, \ldots, x_{k-1}), b)
  =
  f (x_1, \ldots, x_{k-1}, b)
  \]
  for all $(x_1, \ldots, x_{k-1}) \in A$.
  Let $h : A \to C$ be defined by
  \[
  h  (x_1, \ldots, x_{k-1}) := f (x_1, \ldots, x_{k-1}, b).
  \]
  By the induction hypothesis, we have
  $\fundeg (h) \le \sum_{i=1}^{k-1} \partdeg_i (h)$.
  For $i \in \ul{k-1}$, we have
  \[
  \{ h \circ E^{(i)}_{\vb{a}'} \mid \vb{a}' \in A \} =
  \{ f \circ E^{(i)}_{\vb{a}'} \mid \vb{a}' \in \prod_{j=1}^k A_j, a'_k = b \} \subseteq
  \{ f \circ E^{(i)}_{\vb{a}'} \mid \vb{a}' \in \prod_{j=1}^k A_j \}.
  \]
  Taking the suprema of the degrees of the functions in these sets, we obtain
  $\partdeg_i (h) \le \partdeg_i (f)$. Altogether, we obtain
  $\partdeg_1 (g) \le \sum_{i=1}^{k-1} \partdeg_i (f)$.
  Since $g \circ E^{(2)}_{(\vb{a}, b)} = f \circ E^{(k)}_{ (a_1, \ldots, a_{k-1}, b)}$ for
  all $\vb{a} = (a_1, \ldots, a_{k-1}) \in A$ and $b \in A_{k}$, we have
  $\partdeg_2 (g) = \partdeg_k (f)$. 
  Hence $ \fundeg (f) \le \sum_{i=1}^k \partdeg_i (f)$. 
   \end{proof}     
    
    \begin{thm} \label{thm:gfs}
     Let $k \in \N$, $\langle B_i \mid i \in \ul{k} \rangle$ be a family of abelian groups,
     let $A$ and $C$ be abelian groups, and for each $i \in \ul{k}$, let
     $f_i : A \to B_i$.
     Let $g : \prod_{i=1}^k B_i \to C$, and let $G : A \to C$ be defined
     by
     \[
           G (a) := g (f_1  (a) , \ldots, f_k (a))
     \]
     for all $a \in A$.
     Then $\fundeg (G) \le \sum_{i=1}^k \partdeg_i (g) \cdot \fundeg (f_i)$.
   \end{thm}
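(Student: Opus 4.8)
The plan is to decouple the $k$ occurrences of the argument $a$ by passing to a function on the product $A^k$ and then invoking the two main results of the previous sections. Concretely, I would introduce $\tilde{G} : \prod_{i=1}^k A \to C$ (taking all factors equal to $A$) defined by $\tilde{G}(a_1, \ldots, a_k) := g(f_1(a_1), \ldots, f_k(a_k))$, together with the diagonal map $\Delta : A \to A^k$, $\Delta(a) := (a, \ldots, a)$. Then $G = \tilde{G} \circ \Delta$. Since $\Delta$ is a group homomorphism, Lemma~\ref{lem:dprop}\eqref{it:dphom} gives $\fundeg(\Delta) \le 1$, so once we know $\fundeg(\tilde{G}) < \infty$, Theorem~\ref{thm:comp} yields $\fundeg(G) \le \fundeg(\tilde{G}) \cdot 1 = \fundeg(\tilde{G})$.

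The heart of the argument is to bound $\fundeg(\tilde{G})$ by the claimed sum. First I would apply Theorem~\ref{thm:partialtotal} to get $\fundeg(\tilde{G}) \le \sum_{i=1}^k \partdeg_i(\tilde{G})$, and then identify each partial degree. Fix $i \in \ul{k}$ and $\vb{a} = (a_1, \ldots, a_k) \in A^k$, and put $b_j := f_j(a_j)$ for $j \neq i$, collecting these into a tuple $\vb{b}$ (whose $i$\,th entry is irrelevant). The slice $\tilde{G} \circ E^{(i)}_{\vb{a}} : A \to C$ sends $x$ to $g(b_1, \ldots, b_{i-1}, f_i(x), b_{i+1}, \ldots, b_k)$, hence equals $(g \circ E^{(i)}_{\vb{b}}) \circ f_i$. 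Applying Theorem~\ref{thm:comp} to this composition and using the definition of the partial degree of $g$ gives
\[
\fundeg(\tilde{G} \circ E^{(i)}_{\vb{a}}) \le \fundeg(g \circ E^{(i)}_{\vb{b}}) \cdot \fundeg(f_i) \le \partdeg_i(g) \cdot \fundeg(f_i).
\]
Taking the supremum over $\vb{a} \in A^k$ yields $\partdeg_i(\tilde{G}) \le \partdeg_i(g) \cdot \fundeg(f_i)$, and summing over $i$ gives the bound on $\fundeg(\tilde{G})$; combined with the paragraph above this produces $\fundeg(G) \le \sum_{i=1}^k \partdeg_i(g) \cdot \fundeg(f_i)$.

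The step requiring the most care is the repeated appeal to Theorem~\ref{thm:comp}, which is stated only for functions of finite functional degree. I would therefore first dispose of the degenerate cases: if some $\fundeg(f_i)$ or $\partdeg_i(g)$ is infinite while the corresponding cofactor is positive, the right-hand side is $\infty$ and the assertion is trivial; and if $\partdeg_i(g) = 0$ for some $i$, then every slice $g \circ E^{(i)}_{\vb{b}}$ is constant by Lemma~\ref{lem:dprop}\eqref{it:dp3}, so $g$ does not depend on its $i$\,th coordinate and $f_i$ may be dropped from the composition entirely. Carrying out the main argument under the assumption that all $\fundeg(f_i)$ and $\partdeg_i(g)$ are finite then keeps every invocation of Theorem~\ref{thm:comp} legitimate, since the slice bound above already certifies $\fundeg(\tilde{G}) < \infty$ before $G = \tilde{G} \circ \Delta$ is decomposed. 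A minor secondary point to verify is that the supremum defining $\partdeg_i(\tilde{G})$ ranges precisely over the tuples $\vb{b}$ realizable as $(f_j(a_j))_j$, a subset of the tuples allowed in the supremum defining $\partdeg_i(g)$, so the inequality points the right way.
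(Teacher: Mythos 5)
Your proposal is correct and follows essentially the same route as the paper: introduce the decoupled function on $A^k$, bound its partial degrees by writing each slice as $(g \circ E^{(i)}_{\vb{b}}) \circ f_i$ and applying Theorem~\ref{thm:comp}, sum via Theorem~\ref{thm:partialtotal}, and precompose with the diagonal map of degree at most $1$. Your explicit treatment of the infinite and degenerate cases (where Theorem~\ref{thm:comp} as stated would not apply) is a careful addition that the paper's proof leaves implicit, but it does not change the argument.
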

   \begin{proof}
     We first define a function
     $h : A^k  \to C$ by
     \[
     h (x_1, \ldots, x_k) := g (f_1 (x_1), \ldots, f_k (x_n))
     \]
     for all $\vb{x} \in A^k$.
     We will first estimate $\fundeg (h)$. By Theorem~\ref{thm:partialtotal},
     we have
     $\fundeg (h) \le \sum_{i = 1}^k \partdeg_i (h)$. Now let $i \in \ul{k}$.
     For giving an upper bound for $\partdeg_i (h)$, we fix $\vb{a} \in A^k$
     and compute $\fundeg (h \circ E^{(i)}_{\vb{a}})$.
     We have
     $h \circ E^{(i)}_{\vb{a}} = (g \circ E^{(i)}_{(f_1 (a_1), \ldots, f_k (a_k))}) \circ f_i$.
     The functional degree of $g \circ E^{(i)}_{(f_1 (a_1), \ldots, f_k (a_k))}$ is at most
     $\partdeg_i (g)$. Hence by Theorem~\ref{thm:comp},
     $\fundeg (h \circ E^{(i)}_{\vb{a}})  = \fundeg ((g \circ E^{(i)}_{(f_1 (a_1), \ldots, f_k (a_k))}) \circ f_i)
     \le \partdeg_i (g) \cdot \fundeg (f_i)$.
     Taking the supremum we obtain $\partdeg_i (h) \le \partdeg_i (g) \cdot \fundeg (f_i)$. Thus, we have
     \[
     \fundeg (h) \le \sum_{i=1}^k \partdeg_i (g) \cdot \fundeg (f_i).
     \]
     Let us now find the degree of $G$. To this end,
     let $e : A \to A^k$, $e(a) = (a,\ldots, a)$. Then
     by Lemma~\ref{lem:combination}, $\fundeg (e) \le 1$, and therefore
     $\fundeg (G) = \fundeg (h \circ e) \le \fundeg (h) \le \sum_{i=1}^k \partdeg_i (g) \cdot \fundeg (f_i)$. 
   \end{proof}     
     
   \section{Multiplicative properties of the functional degree}
   
\begin{lem}[Multiplicative property of the functional degree]   \label{lem:mprop}
  Let $\algop{A}{+}$ be an abelian group, let
  $\algop{R}{+,\cdot}$ be a (not necessarily commutative)
  ring, and
  let $f, g : A \to R$.
  Then $\fundeg (f \cdot g) \le \fundeg (f) + \fundeg (g)$.
\end{lem}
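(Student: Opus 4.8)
The plan is to realize the pointwise product $f \cdot g$ as a combination of $f$ and $g$ through the ring multiplication, and then to apply Theorem~\ref{thm:gfs}, which is exactly the tool that tracks the \emph{partial} degrees of the outer function rather than the cruder bound of Theorem~\ref{thm:comp}. If either $\fundeg (f)$ or $\fundeg (g)$ is infinite, then $\fundeg (f) + \fundeg (g) = \infty$ and the inequality holds trivially; hence I may assume that both degrees are finite.

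Let $\mu : R \times R \to R$ be the multiplication map of the ring, $\mu (r,s) := r \cdot s$, regarded as a function between the abelian groups $\algop{R \times R}{+}$ and $\algop{R}{+}$. Define $G : A \to R$ by $G (a) := \mu (f(a), g(a))$ for all $a \in A$; then $G = f \cdot g$. Applying Theorem~\ref{thm:gfs} with $k = 2$, $B_1 = B_2 = C = R$, $f_1 := f$, $f_2 := g$, and outer function $\mu$, I obtain
\[
\fundeg (f \cdot g) = \fundeg (G) \le \partdeg_1 (\mu) \cdot \fundeg (f) + \partdeg_2 (\mu) \cdot \fundeg (g).
\]
It therefore remains to show $\partdeg_1 (\mu) \le 1$ and $\partdeg_2 (\mu) \le 1$. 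For the first, fix $(r,s) \in R \times R$; the function $\mu \circ E^{(1)}_{(r,s)} : R \to R$ sends $x$ to $x \cdot s$, which is a homomorphism of the additive group $\algop{R}{+}$ by right distributivity. By Lemma~\ref{lem:dprop}\eqref{it:dphom} its functional degree is at most $1$, and taking the supremum over all $(r,s)$ gives $\partdeg_1 (\mu) \le 1$. Symmetrically, left distributivity shows that $x \mapsto r \cdot x$ is a homomorphism for each fixed $r$, so $\partdeg_2 (\mu) \le 1$. Substituting these bounds into the displayed inequality yields $\fundeg (f \cdot g) \le \fundeg (f) + \fundeg (g)$, as desired.

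The one point that requires care — and the step that makes the bound come out as a \emph{sum} rather than a product — is the use of Theorem~\ref{thm:gfs} in place of the plain composition estimate: writing $f \cdot g$ as $\mu$ composed with the map $a \mapsto (f(a), g(a))$ and bounding via Theorem~\ref{thm:comp} would only give $\fundeg (f \cdot g) \le \fundeg (\mu) \cdot \max\,(\fundeg (f), \fundeg (g)) \le 2 \max\,(\fundeg (f), \fundeg (g))$, which is too weak. The finiteness reduction at the start is what lets me invoke Theorem~\ref{thm:gfs} cleanly; beyond that no real obstacle arises, since the partial degrees of the bilinear map $\mu$ are immediate from the distributive laws.
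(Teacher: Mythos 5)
Your proof is correct and is essentially the paper's own argument: both realize $f \cdot g$ as the bilinear multiplication map applied to $(f,g)$, bound its partial degrees by $1$ via the distributive laws and Lemma~\ref{lem:dprop}\eqref{it:dphom}, and conclude with Theorem~\ref{thm:gfs}. The explicit finiteness reduction you add at the start is harmless extra care, not a difference in method.
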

\begin{proof}
Let $m : R \times R \to R$, $(x,y) \mapsto x \cdot y$.
Since for all $a, b \in R$, both mappings $y \mapsto a \cdot y$ and
$x \mapsto x \cdot b$ are group homomorphisms, Lemma~\ref{lem:dprop}\eqref{it:dphom}
yields $\partdeg_1 (m) \le 1$ and $\partdeg_2 (m) \le 1$.
Now Theorem~\ref{thm:gfs} implies
$\fundeg (f \cdot g) = \fundeg (m (f, g)) \le 1 \cdot \fundeg (f) + 1 \cdot \fundeg (g) =
\fundeg (f) + \fundeg (g)$. 
\end{proof}

A lower bound to the functional degree of certain functions is
provided by the next lemma.
  For two groups $\algop{A}{+}$ and $\algop{B}{+}$, a field $(F,+,\cdot)$,
  and $f : A \to F$ and $g : B \to F$, we let $f \otimes g : A \times B \to F$
  be defined by $(f \otimes g) \, (a, b) := f (a) \cdot g (b)$.
  \begin{lem} \label{lem:prodprop}
    Let $\algop{A}{+}$ and $\algop{B}{+}$
    be abelian groups, let
    $\algop{F}{+,\cdot}$ be a field, and let 
     $f : A \to F$ and $g : B \to F$. 
    We assume that
    none of $f$ and $g$ is the zero function.
    Then $\fundeg (f \otimes g) = \fundeg (f) + \fundeg (g)$.
  \end{lem}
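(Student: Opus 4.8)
The plan is to prove both inequalities separately. Lemma~\ref{lem:mprop} already gives, essentially, the ``$\le$'' direction, so the real content is the lower bound $\fundeg (f \otimes g) \ge \fundeg (f) + \fundeg (g)$. First I would dispose of the easy direction. The function $f \otimes g$ on $A \times B$ factors as a composition: writing $\pi_A, \pi_B$ for the two projections and $m : F \times F \to F$ for multiplication, we have $f \otimes g = m \circ ((f \circ \pi_A), (g \circ \pi_B))$. Using Lemma~\ref{lem:dprop}\eqref{it:dphom} to bound the partial degrees of $m$ by $1$ and Theorem~\ref{thm:gfs} (with the component functions $f \circ \pi_A$ and $g \circ \pi_B$, whose degrees are $\fundeg(f)$ and $\fundeg(g)$ respectively, since composing with a surjective homomorphism does not decrease degree) yields $\fundeg (f \otimes g) \le \fundeg (f) + \fundeg (g)$. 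Here I must allow one of the degrees to be $\infty$; in that case the inequality is trivial, so I may assume $m := \fundeg(f)$ and $n := \fundeg(g)$ are both finite.

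**The lower bound.**
For ``$\ge$'', I would exploit the product structure of the group ring, $\Z[A \times B] \cong \Z[A] \otimes_{\Z} \Z[B]$, under which $\tau_{(a,b)} \mapsto \tau_a \otimes \tau_b$. Let $I := \Aug(\Z[A])$, $J := \Aug(\Z[B])$, $K := \Aug(\Z[A \times B])$. The key algebraic fact I would establish is that $K^{m+n}$ contains an element of the shape $\varphi_A(r)\,\varphi_B(s)$, where $\varphi_A, \varphi_B$ are the embeddings from the proof of Theorem~\ref{thm:partialtotal}, $r \in I^m$, and $s \in J^n$. Indeed $\varphi_A(r)\,\varphi_B(s) \in \hat I^{m} \hat J^{n} \subseteq K^{m+n}$, since $\varphi_A(I) \subseteq K$ and $\varphi_B(J) \subseteq K$. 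The separated-variable identity I want is
\[
(\varphi_A(r)\,\varphi_B(s)) * (f \otimes g) \, (a,b) = \big((r * f)(a)\big) \cdot \big((s * g)(b)\big),
\]
which follows because $\varphi_A(r)$ shifts only the $A$-coordinate and $\varphi_B(s)$ only the $B$-coordinate, and the product $f(a)g(b)$ separates.

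**Choosing the witnesses.**
Since $\fundeg(f) = m$, by definition $I^m * f \neq 0$, so there exist $r \in I^m$ and $a_0 \in A$ with $(r * f)(a_0) \neq 0$; likewise there are $s \in J^n$ and $b_0 \in B$ with $(s * g)(b_0) \neq 0$. Because $F$ is a \emph{field} and hence an integral domain, the product $(r*f)(a_0) \cdot (s*g)(b_0)$ is nonzero. Evaluating the separated-variable identity at $(a_0, b_0)$ gives $(\varphi_A(r)\,\varphi_B(s)) * (f \otimes g) \neq 0$, and since this element lies in $K^{m+n}$, we conclude $K^{m+n} * (f \otimes g) \neq 0$, i.e.\ $\fundeg(f \otimes g) \ge m + n$. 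Combined with the upper bound, this gives equality.

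**Main obstacle.**
The step I expect to require the most care is the separated-variable identity for $(\varphi_A(r)\,\varphi_B(s)) * (f \otimes g)$: I must verify that the two embedded shift operators commute and act on disjoint coordinates, so that applying $\varphi_A(r)$ reproduces $(r*f)$ in the first coordinate while leaving $g$ untouched, and symmetrically for $\varphi_B(s)$. This is exactly where the tensor-product description of $\Z[A \times B]$ and the relations in~\eqref{eq:phiNew} from Theorem~\ref{thm:partialtotal} do the work. The only genuinely essential hypothesis beyond that is that $F$ has no zero divisors, which is what converts ``each factor nonzero at some point'' into ``the product is nonzero at the combined point''; this is why the statement is false for general codomains $B$ but holds for fields.
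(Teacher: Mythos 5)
Your proposal is correct and follows essentially the same route as the paper: the same embeddings $\varphi_A,\varphi_B$ and separated-variable identity $(\varphi_A(r)\varphi_B(s))*(f\otimes g)=(r*f)\otimes(s*g)$ for the lower bound (with the field's lack of zero divisors doing the key work), and the same multiplicative upper bound via Theorem~\ref{thm:gfs}/Lemma~\ref{lem:mprop}. The only loose end is that your reduction to finite $m:=\fundeg(f)$, $n:=\fundeg(g)$ is justified only for the upper bound, whereas the lower bound in the infinite case needs the paper's formulation with arbitrary finite $m\le\fundeg(f)$ and $n\le\fundeg(g)$ (and note that for $m=0$ the nonvanishing of $I^0*f$ is exactly where the hypothesis $f\neq 0$ enters).
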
  
  \begin{proof}
  Let $I := \Aug (\Z[A])$, $J := \Aug (\Z[B])$, $K := \Aug (\Z [A \times B])$.
  Let $\varphi_A : \Z[A] \to \Z[A \times B]$ be the ring homomorphism defined
  by $\varphi_A (\tau_a) := \tau_{(a,0)}$ for all $a \in A$, and let
  $\varphi_B : \Z[B] \to \Z [A \times B]$ be defined by $\varphi_B (\tau_b) := \tau_{(0,b)}$.
  We see that $\varphi_A (\tau_a) * (f \otimes g) = (\tau_a * f) \otimes g$ and
  $\varphi_B (\tau_b) * (f \otimes g) = f \otimes (\tau_b * g)$.
  Next, we show that for all elements $r = \sum_{a \in A} z_a \tau_a \in \Z[A]$
  and $s = \sum_{b \in B} z'_b \tau_b \in \Z[B]$, 
  we have
  \begin{equation} \label{eq:phi}
    (\varphi_A (r) \cdot \varphi_B (s)) * (f \otimes g) = (r * f) \otimes (s * g).
  \end{equation}
  We have
  \[
  \begin{split}
   (r * f) \otimes (s * g) \, (x,y) &= (\sum_{a \in A} z_a f (x+a))(\sum_{b \in B} z'_b g (y + b))
   \\ &  = \sum_{a \in A} \sum_{b \in B} z_a z'_b f(x + a) g (y + b) 
   \\ & = \sum_{a \in A} \sum_{b \in B} z_a z'_b \, \big(
       (\tau_{(a,b)} * (f \otimes g)) \, (x, y) \big) 
   \\ & = \sum_{a \in A} \sum_{b \in B} z_a z'_b \, \big(
   ((\tau_{(a,0)} \cdot \tau_{(0,b)}) * (f \otimes g)) \, (x, y) \big)   \\ & =
   \big( (\sum_{a \in A} \sum_{b \in B} z_a z'_b \tau_{(a,0)} \cdot \tau_{(0,b)}) * (f \otimes g) \big) \,\, (x, y)
   \\ & =
    \Big( \big((\sum_{a \in A} z_a \varphi_A (\tau_a)) \cdot (\sum_{b \in B} z'_b \varphi_B (\tau_b)) \big) * (f \otimes g) \Big) \, (x,y)
   \\ & =
   ((\varphi_A (r) \cdot \varphi_B (s)) * (f \otimes g)) \, (x , y),
  \end{split}
  \]
  which proves~\eqref{eq:phi}.
  Since $\varphi_A (I) \subseteq K$, we have $\varphi_A (I^k) \subseteq K^k$, and similarly
  $\varphi_B (J^k) \subseteq K^k$ for all $k \in \N_0$.

  Let $m,n \in \N_0$ be such that
  $m \le \fundeg (f)$ and $n \le \fundeg (g)$. Since $f \neq 0$ and $g \neq 0$, 
  we have $I^m * f \neq 0$ and $J^n * g \neq 0$, and thus
  there are $i \in I^m$ and
  $j \in J^n$ such that
  $i * f \neq 0$ and $j * g \neq 0$.
  We have $(\varphi_A (i) \cdot \varphi_B (j)) * (f \otimes g) =
  (i * f) \otimes (j * g)$. Now if $a \in A$ and $b \in B$ are
  such that $(i*f) \, (a) \neq 0$ and $(j * g) \, (b) \neq 0$,
  then $((i * f) \otimes (j * g))\, (a,b) \neq 0$, and therefore
  $(i * f) \otimes (j * g) \neq 0$.
  Since $\varphi_A (i) \cdot \varphi_B (j) \in K^{m + n}$, we obtain
  $K^{m + n} * (f \otimes g) \neq 0$, which implies
  $\fundeg (f \otimes g) \ge m + n$. Hence we have $\fundeg (f \otimes g) \ge \fundeg (f) + \fundeg (g)$.

  For the other inequality,
  we let $p_1 : A \times B \to A$, $(a,b) \mapsto a$ and
  $p_2 : A \times B \to B$, $(a,b) \mapsto b$.
  By Lemma~\ref{lem:dprop}\eqref{it:dphom},
  $\fundeg (p_1) \le 1$ and
  $\fundeg (p_2) \le 1$.
    Let $\hat{f} : A \times B \to R$, $(a, b) \mapsto f(a)$, and
  $\hat{g} : A \times B \to R$, $(a, b) \mapsto g(b)$. Then
  $f \otimes g = \hat{f} \cdot \hat{g}$.
  Now $\fundeg (f \otimes g) = \fundeg (\hat{f} \cdot \hat{g})
               \le \fundeg (\hat{f}) + \fundeg (\hat{g}) =
               \fundeg (f \circ p_1) + \fundeg (g \circ p_2) \le
               \fundeg (f) \cdot 1 + \fundeg (g) \cdot 1$.
  \end{proof}

\section{Functions of maximal degree} \label{sec:genbounds}
  We will need upper bounds for the degrees
  of functions between two finite abelian groups,
  and we will also need examples of functions for which
  these bounds are attained.
  For two abelian groups $A, B$, we define
  \[
  \delta (A, B) := \sup \, (\{ \fundeg (f) \mid f \in B^A \}).
  \]
  For $a \in A$ and $b \in B$, we define the
  characteristic function $\chi_{a}^{A,b}$ of $a$ with value $b$ by
  \[
     \chi_a^{A,b} (a) = b \text{ and }
     \chi_a^{A,b} (x) = 0 \text{ for all }x \in A \setminus\{a\}.
  \]    
  If $A$ and $B$ have nonzero elements of coprime order,
  then $\delta(A,B) = \infty$. To this end, we first show:
  \begin{lem} \label{lem:pq}
    Let $p,q$ be different primes, let $P$ be
    an abelian $p$-group, and let $Q$ be an abelian
    $q$-group.
    Then every function $f:P \to Q$ of finite functional degree
    is constant.
  \end{lem}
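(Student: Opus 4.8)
The plan is to show that finite functional degree forces $\fundeg(f)=0$, i.e.\ that $f$ is constant by Lemma~\ref{lem:dprop}\eqref{it:dp3}. Since $\Aug(\Z[P])$ is generated as an ideal by $\{\tau_a-1\mid a\in P\}$ and $\Ann(f)=\{r\in\Z[P]\mid r*f=0\}$ is an ideal (as in the proof of Lemma~\ref{lem:gen}), it suffices to prove that $(\tau_a-1)*f=0$ for every $a\in P$: this gives $\Aug(\Z[P])\subseteq\Ann(f)$, hence $\Aug(\Z[P])*f=0$ and $\fundeg(f)=0$. So I would fix $a\in P$ and abbreviate $t:=\tau_a$. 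Since $P$ is a $p$-group, $a$ has finite order $p^k$, so $t^{p^k}=\tau_{p^k a}=1$ in $\Z[P]$.

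Write $m:=\fundeg(f)<\infty$. Then $(t-1)^{m+1}\in\Aug(\Z[P])^{m+1}$ annihilates $f$, so $(t-1)^{m+1}*f=0$. The heart of the argument is a descent showing that $(t-1)^{n}*f=0$ already implies $(t-1)*f=0$. The tool is the ``norm'' element $s:=\sum_{j=0}^{p^k-1}t^{j}\in\Z[P]$. Because $t^{p^k}=1$ we have $t\cdot s=s$, hence $(t-1)s=0$; and since $t^{j}-1\in(t-1)$ for every $j$, we get $s-p^k=\sum_{j=0}^{p^k-1}(t^{j}-1)\in(t-1)$, say $s=p^k+(t-1)u$ with $u\in\Z[P]$.

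With these identities I would prove by induction on $n\ge1$ that $(t-1)^{n}*f=0$ implies $(t-1)*f=0$. The case $n=1$ is trivial. For $n\ge2$, multiply $s=p^k+(t-1)u$ by $(t-1)^{n-1}$ and apply the result to $f$: commutativity of $\Z[P]$ gives $p^k\,(t-1)^{n-1}*f=(t-1)^{n-1}s*f-u*\big((t-1)^{n}*f\big)$. The first summand vanishes because $(t-1)^{n-1}s=(t-1)^{n-2}\big((t-1)s\big)=0$, and the second vanishes by hypothesis, so $p^k\cdot\big((t-1)^{n-1}*f\big)=0$. Now $(t-1)^{n-1}*f$ is a function into the $q$-group $Q$, so each of its values has $q$-power order; being also killed by $p^k$ with $\gcd(p,q)=1$, every such value is $0$, i.e.\ $(t-1)^{n-1}*f=0$. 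The induction hypothesis then yields $(t-1)*f=0$.

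Applying this with $n=m+1$ gives $(\tau_a-1)*f=0$ for the chosen $a$, and since $a\in P$ was arbitrary I conclude as above that $f$ is constant. I expect the main obstacle to be isolating the right descent mechanism: the naive hope that $t-1$ acts nilpotently on $Q$-valued functions fails, so one must instead exploit the coprimality of $p$ and $q$ through the averaging element $s$, whose two faces, $(t-1)s=0$ and $s\equiv p^k\bmod(t-1)$, convert a high power of $t-1$ into multiplication by the $p$-power $p^k$, which annihilates nothing nonzero in a $q$-group. Everything else is routine manipulation in the commutative ring $\Z[P]$ together with the elementary properties collected in Lemma~\ref{lem:dprop}.
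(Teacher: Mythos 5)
Your proof is correct, and it takes a genuinely different route from the paper's. The paper argues by induction on the degree itself: it first shows that a function of functional degree $1$ would give rise (after subtracting $f(0)$) to a group homomorphism $P\to Q$, which must be trivial because $p\neq q$, and then rules out degree $n+1$ by producing a degree-$n$ function via Lemma~\ref{lem:dprop}\eqref{it:dp2a}. You instead work entirely inside the group ring $\Z[P]$: for each generator $\tau_a$ of order $p^k$ you use the norm element $s=\sum_{j=0}^{p^k-1}\tau_a^j$, whose two properties $(\tau_a-1)s=0$ and $s\equiv p^k \pmod{(\tau_a-1)}$ yield $p^k\,(\tau_a-1)^{n-1}*f = -u*\bigl((\tau_a-1)^{n}*f\bigr)$, and coprimality of $p^k$ with the exponent of elements of $Q$ lets you descend from $(\tau_a-1)^{m+1}*f=0$ all the way to $(\tau_a-1)*f=0$; combined with the fact (from the proof of Lemma~\ref{lem:gen}) that $\Aug(\Z[P])$ is generated by the $\tau_a-1$ and that $\Ann(f)$ is an ideal, this gives $\fundeg(f)=0$. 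All steps check out. The paper's argument is shorter given the machinery of Lemma~\ref{lem:dprop}, while yours isolates a reusable ring-theoretic mechanism (the congruence $s\equiv p^k$ modulo $(\tau_a-1)$ converting a high power of $\tau_a-1$ into multiplication by $p^k$) and avoids any appeal to the classification of homomorphisms between $p$- and $q$-groups.
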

  \begin{proof}
    We will show that there is no function of
    degree $n$ with $n \in \N$.
    We proceed by induction. If the degree
    of $f$ is~$1$, then
    for all $a,b \in P$, we have
    $0 = (((\tau_b - 1) (\tau_{a} -1)) * f) \, (0) =
      f(a+b) - f(a) + f(0) - f (b)$, and therefore
      $g$ defined by $g(x)= f(x) - f(0)$ is a group
      homomorphism from $P$ to $Q$.
      Now for every $x \in P$, the order of $x$ is $p^m$ for some
      $m \in \N_0$. Hence if $g(x) \neq 0$,
      the order of $g(x)$ is $p^{m_1} = q^{m_2}$ for some
      $m_1, m_2 \in \N_0$. Thus $m_1= m_2 = 0$, 
          which implies $g(x) = 0$. 
      Therefore, $f$ is a constant mapping, and thus its
      degree is $0$. Hence there is no $f$ of degree $1$.
      For the induction step, let $n \in \N$. Suppose that
      there is a function $f$ of degree $n + 1$.
      Then by Lemma~\ref{lem:dprop}\eqref{it:dp2a},
      there also exists a function $f'$ of degree $n$. But no such
      function exists by the induction hypothesis. Hence
      there is no function $f$ of degree $n+1$ either.
   \end{proof}   
  \begin{thm} \label{thm:deltafinite}
    Let $A$ and $B$ be periodic abelian groups.
    If $\delta (A, B)$ is finite, then there is
    a prime $p$ such that both $A$ and $B$ are $p$-groups.
  \end{thm}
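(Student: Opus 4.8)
The plan is to prove the contrapositive: assuming there is \emph{no} prime $p$ for which both $A$ and $B$ are $p$-groups, I will produce a single function $f \colon A \to B$ of infinite functional degree, so that $\delta(A,B) = \infty$. We may assume $A$ and $B$ are both nontrivial, since otherwise every function $A \to B$ is constant. For a periodic abelian group $G$ let $\pi(G)$ denote the set of primes $p$ for which $G$ has an element of order $p$; since $G$ is periodic, $p \in \pi(G)$ already forces an element of order exactly $p$ (multiply an element whose order is divisible by $p$ by a suitable integer). A nontrivial periodic abelian group $G$ is a $p$-group precisely when $\pi(G) = \{p\}$.

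First I would carry out the combinatorial translation of the hypothesis. The condition ``both $A$ and $B$ are $p$-groups for a common $p$'' is equivalent to $\pi(A) = \pi(B) = \{p\}$ for a single prime $p$. Negating this, with $\pi(A)$ and $\pi(B)$ nonempty, yields primes $p \in \pi(A)$ and $q \in \pi(B)$ with $p \neq q$: indeed, if every element of $\pi(A)$ coincided with every element of $\pi(B)$, then both sets would be the same singleton. Fixing such $p \neq q$, choose $a \in A$ of order $p$ and $b \in B$ of order $q$. Then $\langle a \rangle \cong \Z/p\Z$ is a $p$-group and $\langle b \rangle \cong \Z/q\Z$ is a $q$-group.

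Next I would construct the function. Let $h \colon \langle a \rangle \to \langle b \rangle$ be any nonconstant function, for instance $h(a) = b$ and $h(x) = 0$ otherwise. By Lemma~\ref{lem:pq}, applied with $P = \langle a \rangle$ and $Q = \langle b \rangle$, a function between a $p$-group and a $q$-group of finite functional degree must be constant; hence $\fundeg(h) = \infty$. I then extend $h$ to $f \colon A \to B$ by setting $f(x) := h(x)$ for $x \in \langle a \rangle$ and $f(x) := 0$ otherwise, so that $f|_{\langle a \rangle} = h$. By Lemma~\ref{lem:rest}, $\fundeg(f) \ge \fundeg(f|_{\langle a \rangle}) = \fundeg(h) = \infty$, whence $\delta(A,B) = \infty$, contradicting finiteness.

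The step that requires care, and which I expect to be the main obstacle, is that Lemma~\ref{lem:pq} computes the degree of $h$ into the $q$-group $\langle b \rangle$, whereas for the final function the codomain is the possibly non-$p$-group $B$. To bridge this I would observe that the inclusion $\iota \colon \langle b \rangle \hookrightarrow B$ commutes with the module action: for every $\sigma = \sum_{c} z_c \tau_c \in \Z[\langle a \rangle]$ one has $(\sigma * (\iota \circ h))(x) = \iota((\sigma * h)(x))$, since $\iota$ is additive and the action only adds shifted values. As $\iota$ is injective, $\sigma * (\iota \circ h) = 0$ if and only if $\sigma * h = 0$, so viewing $h$ as a map into $B$ leaves its functional degree unchanged. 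The remaining routine points are the reduction to nontrivial $A,B$ and the verification that extension by zero indeed restricts to $h$ on $\langle a \rangle$.
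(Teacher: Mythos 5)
Your proposal is correct and takes essentially the same route as the paper: produce distinct primes $p,q$ with $a\in A$ of order $p$ and $b\in B$ of order $q$, restrict a characteristic-type function to $\langle a\rangle$, and combine Lemma~\ref{lem:rest} with Lemma~\ref{lem:pq} to get infinite degree. Your explicit check that enlarging the codomain from $\langle b\rangle$ to $B$ does not change the functional degree is a point the paper's proof leaves implicit, but it is not a different argument.
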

  \begin{proof}
    Suppose that there are primes $p, q$ with
    $p \neq q$ such that $A$ has an element $a$ of order
    $p$ and $B$ has an element $b$ of order $q$. We claim
    that $\fundeg (\chi_0^{A, b}) = \infty$.
      Let $\langle a \rangle$ be the subgroup generated by $a$.
      Lemma~\ref{lem:rest} yields
      $\fundeg (\chi_0^{A, b}) \ge \fundeg (\chi_0^{A, b}|_{\langle a \rangle})$.
      This restriction is a nonconstant mapping from the $p$-group
      $\langle a \rangle$ into
      the $q$-group $\langle b \rangle$, which has infinite degree
      by Lemma~\ref{lem:pq}.
  \end{proof}
  Before we investigate $\delta (A, B)$ for finite abelian
  $p$-groups, we relate $\delta (A, B)$ to the nilpotency
  degree of the augmentation ideal in the group ring
  (cf. \cite[Corollary~3.10]{Ma:TSMP}).
  Let $R$ be a commutative ring with $1$, and let $R[A]$
  denote the group ring of $A$. Then we define
  \[
  \nu (R [A]) := \min \, \{ m \in \N \mid (\Aug (R[A]))^m = 0 \},
  \]
  with $\nu (R [A]) = \infty$ if no such $m$ exists.

  For elements $r \in \Z[A]$, the ideal generated by
  $r$ will be denoted by $(r)$; if $n \in \Z$, then
  $(n) = (n \tau_0)$.
  \begin{lem} \label{lem:deltanu}
    Let $A$ be an abelian group, and let $B$ be
    an abelian group of finite exponent~$n \ge 2$. Then
    $\delta (A, B) + 1 = \nu (\Z_n [A])$ (with the convention
    $\infty + 1 = \infty$).
  \end{lem}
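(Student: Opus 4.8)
The plan is to transport the whole question into the group ring $\Z_n[A]$ and there read off the functional degree directly from powers of the augmentation ideal, using characteristic functions to realize the extremal degrees. First I would exploit that $B$ has exponent $n$: every $f \in B^A$ satisfies $nf = 0$, so the ideal $n\Z[A]$ annihilates the $\Z[A]$-module $B^A$, and the module action factors through the quotient ring $\Z_n[A] = \Z[A]/n\Z[A]$. Writing $\pi : \Z[A] \to \Z_n[A]$ for the canonical surjection and $I := \AugZA$, I note $\pi(I) = \Aug(\Z_n[A]) =: \bar{I}$ and $\pi(I^m) = \bar{I}^m$, so that $I^m * f = 0$ if and only if $\bar{I}^m * f = 0$. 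By the definition of the functional degree this gives $\fundeg(f) \le m$ if and only if $\bar{I}^{m+1} * f = 0$, for every $f \in B^A$. Set $\nu := \nu(\Z_n[A]) = \min(\{m \in \N \mid \bar{I}^m = 0\})$.

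For the inequality $\delta(A,B) + 1 \le \nu$ I would assume $\nu < \infty$ (otherwise there is nothing to prove) and simply invoke $\bar{I}^\nu = 0$: then $\bar{I}^{(\nu-1)+1} * f = 0$ for every $f$, so $\fundeg(f) \le \nu - 1$, whence $\delta(A,B) \le \nu - 1$.

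For the reverse inequality I would show that whenever $\bar{I}^m \neq 0$ there is a function of functional degree at least $m$. The key ingredient is the sublemma that an abelian group of finite exponent $n$ has an element $b$ of order exactly $n$: decomposing $n = \prod_i p_i^{e_i}$, minimality of the exponent forces $(n/p_i)B \neq 0$, so there is $x_i$ with $(n/p_i)x_i \neq 0$, which together with $\mathrm{ord}(x_i) \mid n$ yields $p_i^{e_i} \mid \mathrm{ord}(x_i)$; suitable multiples $y_i$ of the $x_i$ have order $p_i^{e_i}$, and their sum has order $n$. Now, given a nonzero $r = \sum_a z_a \tau_a \in \bar{I}^m$, I would pick an index $a_0$ with $z_{a_0} \neq 0$ in $\Z_n$ and take $f := \chi_{a_0}^{A,b}$. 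Then $(r * f)(0) = z_{a_0}\, b \neq 0$, because $b$ has order $n$ and $0 \not\equiv z_{a_0} \pmod{n}$; hence $\bar{I}^m * f \neq 0$ and $\fundeg(f) \ge m$. If $\nu < \infty$, taking $m = \nu - 1$ (using $\bar{I}^{\nu-1} \neq 0$) gives $\delta(A,B) \ge \nu - 1$, so $\delta(A,B) + 1 \ge \nu$; if $\nu = \infty$, then $\bar{I}^m \neq 0$ for all $m$, producing functions of arbitrarily large degree and $\delta(A,B) = \infty$. In either case $\delta(A,B) + 1 = \nu$.

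The module-theoretic reduction and the two inequalities are a direct transcription of the definitions and the established characterizations of $\fundeg$. I expect the one genuinely non-formal point to be the sublemma furnishing an element of order exactly $n$ in $B$, together with the small bookkeeping observation that a single nonzero coefficient $z_{a_0}$ already lets the characteristic function detect $r * f \neq 0$; this is precisely what converts nonvanishing of $\bar{I}^m$ into the existence of a function of degree at least $m$, and everything else is routine.
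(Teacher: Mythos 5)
Your proof is correct and follows essentially the same route as the paper: both directions reduce the module action to $\Z_n[A]$ and use the characteristic function $\chi^{A,b}$ with $b$ of order $n$ to detect nonzero elements of powers of the augmentation ideal (your ``a single nonzero coefficient $z_{a_0}$ detects $r*f\neq 0$'' is exactly the paper's faithfulness argument, stated contrapositively). The only addition is that you spell out the existence of an element of order exactly $n$ in $B$, which the paper uses without comment.
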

  \begin{proof}
     We first prove $\delta (A, B) + 1 \le \nu (\Z_n [A])$.
     If $\nu (\Z_n [A]) = \infty$,  there is nothing to prove,
     so we assume $\nu (\Z_n [A]) = m \in  \N$. We then have
    $(\Aug (\Z_n [A]))^m = 0$. From this, we conclude
    that $(\AugZA)^m \subseteq (n)$ in $\Z[A]$.
    Hence $(\AugZA)^m * f = 0$ for every $f \in B^A$, and therefore
    $\fundeg (f) \le m-1$. This implies $\delta (A, B) + 1 \le m$.

    For the inequality $\delta (A, B) + 1 \ge \nu (\Z_n [A])$,
    we assume $\delta (A, B) < \infty$. Then
    $\AugZA^{\delta (A, B) + 1} * f = 0$ for every $f \in B^A$.
    We have $\Z_n [A] = \Z [A] / (n)$, and it acts faithfully
    on $B^A$ by $(r + (n))*  f := r*f$. To prove faithfulness,
    we fix an element
    $b \in B$ of order $n$.
    Now suppose that $(r + (n)) * \chi_0^{A, b} = 0$.
    Let $r = \sum_{a \in A} z_a \tau_a$.
    Then $0 = r * \chi_0^{A,b} =  
        \sum_{a \in A} z_a (\tau_a * \chi_{0}^{A, b})
          = \sum_{a \in A} z_a \chi_{-a}^{A, b}$.
            The value of this function at $x \in A$ is
            $z_{-x} b$. From $0 = z_{-x} b$, we obtain
            that $n$ divides $z_{-x}$. Altogether, $r \in (n)$,
            completing the proof that $\Z_n [A]$ acts faithfully
            on $B^A$.
            Hence $\AugZA^{\delta (A, B) + 1} \subseteq (n)$,
            and therefore $\Aug (\Z_n [A])^{\delta (A, B) + 1} = 0$.
            Thus $\nu (\Z_n [A]) \le \delta (A, B) + 1$. \qed

            Often, certain characteristic functions attain
            the maximal possible degree.
               \begin{lem} \label{lem:deltachi}
       Let $A$ be an abelian group, let $B$ be
       an abelian group of finite exponent~$n$, and let
       $b \in B$ be an element of order $n$.
       Then
       $\delta (A,B) = \fundeg (\chi_{0}^{A,b})$.
   \end{lem}
   Since $\delta (A, B) \ge \fundeg (\chi_{0}^{A,b})$ follows
   from the definition of $\delta (A, B)$, we only have
   to prove $\delta (A, B) \le \fundeg (\chi_0^{A, b})$.
   We first show that for every $a \in A$ and $b' \in B$, we have
   \begin{equation} \label{eq:chis}
   \fundeg (\chi_0^{A, b}) \ge  \fundeg (\chi_a^{A, b'}).
   \end{equation}
   We first construct
   a group endomorphism $h$ of $B$
   such that $h (b) = b'$. Such an endomorphism can be constructed
   by constructing an endomorphism $h_p$ in each $p$-component $B_p$
   of $B$ \cite[4.1.1]{Ro:ACIT}. We will now show that
   in an abelian $p$-group $B_p$ of
   finite exponent, an element $b_p$ of
   maximal order $p^{\beta}$ can be mapped to each element $b_p' \in B_p$.
   By \cite[4.2.7]{Ro:ACIT}, $b_p$ generates a direct factor of $B_p$,
   thus
   there is an epimorphism $e : B_p \to \Z_{p^{\beta}}$ with
   $e (b_p) = 1$. We define
   $i : \Z_{p^{\beta}} \to  B_p$ by $i (z) = z b_p'$,
   and $h_p := i \circ e$.
   From these $h_p$'s, we obtain a group endomorphism of $B$ with
   $h(b) = b'$.
    Since
   $\chi_a^{A, b'} = h \circ (\tau_{-a} * \chi_0^{A, b})$,
    Theorem~\ref{thm:comp} and Lemma~\ref{lem:dprop} yield
   $\fundeg (\chi_a^{A, b'} ) \le 1 \cdot \fundeg (\chi_0^{A, b})$,
   which completes the proof of~\eqref{eq:chis}.
   Now let $f \in B^A$ such that $\fundeg (f) \ge m \in \N_0$.
   We will show that then $\fundeg (\chi_0^{A, b}) \ge m$.
   Since $\fundeg (f) \ge m$, there are
   $a_1, \ldots, a_m \in A$ and there is $y \in A$ such that
   $((\prod_{i = 1}^m (\tau_{a_i} - 1)) * f) \, (y)  \neq 0$.
   Let $A'$ be a finite subset of $A$ and $(z_a)_{a \in A'} \in \Z^{A'}$
   be such that
   $\prod_{i = 1}^m (\tau_{a_i} - 1)
    = \sum_{a \in A'} z_a \tau_a$.
    Let $g := \sum_{a \in A'} \chi_{y + a}^{A, f(y + a)}$.
    Then for $S = \{ y + a \mid a \in A' \}$, we have
    $g|_S = f|_S$.
    Hence $0 \neq ((\prod_{i = 1}^m (\tau_{a_i} - 1)) * f) \, (y)  =
           ((\sum_{a \in A'} z_a \tau_a) * f) \, (y) =
            \sum_{a \in A'} z_a f (y + a) =
            \sum_{a \in A'} z_a g (y + a) =
            ((\sum_{a \in A'} z_a \tau_a) * g) \, (y) =
            ((\prod_{i = 1}^m (\tau_{a_i} - 1)) * g) \,  (y)$.
            Thus $\fundeg (g) \ge m$. Hence there is
            an $a \in A'$ such that
            $\fundeg (\chi_{y + a}^{A, f(y + a)}) \ge m$,
      and thus by \eqref{eq:chis}, $\fundeg (\chi_{0}^{A, b}) \ge m$.
      This implies $\fundeg (\chi_0^{A, b}) \ge \delta (A, B)$.
 \end{proof}      

  \section{Bounds for the functional degree in finite $p$-groups} \label{sec:bounds}
  We will now determine upper bounds for
  $\delta (A, B)$, where
  $A = \prod_{i=1}^k \Z_{p^{\alpha_i}}$ and $B$ is an abelian group of exponent
  $p^{\beta}$.
  In \cite[Corollary~2.5]{Ka:TJRO}, we find that
  $\nu (\Z_p [A]) = 1 + \sum_{i=1}^k (p^{\alpha_i} - 1)$, and
  therefore Lemma~\ref{lem:deltanu} yields 
  $\delta (A, \Z_p) = \sum_{i=1}^k (p^{\alpha_i} - 1)$.
  For computing an upper bound for $\delta (A, B)$, we
  let $m := \delta (A, \Z_p)$ and observe
  that the augmentation ideal $J$ of $\Z_p [A]$
  satisfies $J^{m+1} = 0$. Therefore
  in $\Z[A]$, $(\AugZA)^{m + 1} \subseteq (p)$, and thus
  $(\AugZA)^{\beta (m+1)} \subseteq (p^{\beta})$, which implies
  $(\AugZA)^{\beta (m+1)} * B^A\ = 0$. Thus every function from $A$ to $B$
  is of functional degree at most $\beta (m+1) - 1$.

  We include a self-contained derivation of these results
  without resorting to the results on the nilpotency degree of the augmentation
  ideal from \cite{Ka:TJRO}, and without the results
  from Section~\ref{sec:genbounds}. In the
  case $\beta \ge 2$, we obtain a bound which is lower than the one obtained
  through the ring theoretic considerations given above.
      \begin{lem} \label{lem:pgroupupperbound}
    Let $p$ be a prime, let $k \in \N$,
    let $\alpha_1, \ldots, \alpha_k \in \N_0$,
    and let $\beta \in \N_0$.
    Let $A := \prod_{i=1}^k \Z_{p^{\alpha_i}}$, and let
    $B$ be an abelian group of exponent $p^{\beta}$,
    and let $f :A \to B$. Then 
    $\fundeg (f) \le \beta \sum_{i=1}^k (p^{\alpha_i} - 1)$.
  \end{lem}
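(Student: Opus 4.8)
The plan is to reduce the statement to the one-variable cyclic case by means of the partial degree, and then to settle that case by an explicit nilpotency computation in the group ring $\Z_{p^\beta}[\Z_{p^\alpha}]$.

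First I would apply Theorem~\ref{thm:partialtotal}, which gives $\fundeg(f) \le \sum_{j=1}^k \partdeg_j(f)$. By definition $\partdeg_j(f)$ is the supremum of the functional degrees of the functions $f \circ E^{(j)}_{\vb{a}} : \Z_{p^{\alpha_j}} \to B$ obtained by fixing all coordinates except the $j$th. Thus it suffices to establish the one-variable statement that every $g : \Z_{p^\alpha} \to B$, with $B$ of exponent $p^\beta$, satisfies $\fundeg(g) \le \beta(p^\alpha - 1)$; summing the resulting bounds $\partdeg_j(f) \le \beta(p^{\alpha_j}-1)$ over $j$ then yields $\fundeg(f) \le \beta \sum_{j=1}^k (p^{\alpha_j}-1)$.

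To treat the one-variable case, I would put $t := \tau_1$, so that $\Z[\Z_{p^\alpha}] \cong \Z[t]/(t^{p^\alpha}-1)$ and $t-1$ generates the augmentation ideal. Since $\Z_{p^\alpha}$ is generated by the single element $1$, Lemma~\ref{lem:gen} reduces the goal to showing $(t-1)^{\beta(p^\alpha-1)+1} * g = 0$. As $B$ has exponent $p^\beta$, the scalar $p^\beta$ annihilates $B^{\Z_{p^\alpha}}$, so the action factors through $\Z[\Z_{p^\alpha}]/(p^\beta) = \Z_{p^\beta}[t]/(t^{p^\alpha}-1)$, and it is enough to prove that $(t-1)^{\beta(p^\alpha-1)+1} = 0$ in that quotient.

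The main work, and the step I expect to need the most care, is this ring computation. I would set $\sigma := \sum_{i=0}^{p^\alpha-1} t^i$, so that $(t-1)\sigma = t^{p^\alpha}-1 = 0$ in $\Z[t]/(t^{p^\alpha}-1)$, while the congruence $t^{p^\alpha}-1 \equiv (t-1)^{p^\alpha} \pmod p$ shows $(t-1)^{p^\alpha-1} \equiv \sigma \pmod p$. Writing $(t-1)^{p^\alpha-1} = \sigma + pw$ and multiplying by $t-1$ yields the key relation $(t-1)^{p^\alpha} = p(t-1)w$. A short induction on $k$ then gives $(t-1)^{k(p^\alpha-1)+1} = p^k(t-1)w^k$ for all $k \ge 1$: multiplying the identity for $k$ by $(t-1)^{p^\alpha-1}$ and using $(t-1)^{p^\alpha} = p(t-1)w$ produces the identity for $k+1$. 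Taking $k = \beta$ and reducing modulo $p^\beta$ gives $(t-1)^{\beta(p^\alpha-1)+1} = 0$, which finishes the one-variable case and hence, through the partial-degree reduction, the lemma. The degenerate cases, where $\beta = 0$ (so $B$ is trivial) or some $\alpha_i = 0$ (so the corresponding factor is trivial), contribute $0$ on both sides and are immediate.
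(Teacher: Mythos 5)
Your proof is correct, and while the outer reduction is identical to the paper's (both use Theorem~\ref{thm:partialtotal} to reduce to the one-variable case $A=\Z_{p^\alpha}$, and both then invoke Lemma~\ref{lem:gen} for the single generator), your treatment of the one-variable case is genuinely different. The paper proceeds by induction on $\beta$ on the \emph{codomain} side: it passes to the quotient $B/C$ by the subgroup $C$ of elements of order dividing $p$, deduces that $(\tau_a-1)^{(\beta-1)(p^\alpha-1)}*f$ differs from a constant by a function into $C$, and then applies the exponent-$p$ case to that function. You instead prove a purely ring-theoretic nilpotency statement: from $(t-1)^{p^\alpha-1}=\sigma+pw$ with $(t-1)\sigma=0$ in $\Z[t]/(t^{p^\alpha}-1)$ you get the key relation $(t-1)^{p^\alpha}=p(t-1)w$, and iterating it gives $(t-1)^{\beta(p^\alpha-1)+1}=p^\beta(t-1)w^\beta$. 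This is worth noting because the paper explicitly remarks (at the start of Section~\ref{sec:bounds}) that the ``naive'' ring-theoretic argument $(\AugZA)^{m+1}\subseteq(p)\Rightarrow(\AugZA)^{\beta(m+1)}\subseteq(p^\beta)$ only yields the weaker bound $\beta(m+1)-1$, and that the function-theoretic induction is included precisely to shave off the extra $\beta-1$; your computation shows that the sharper bound is in fact also attainable ring-theoretically in the cyclic case, because the factor $(t-1)$ surviving in $(t-1)^{p^\alpha}=p(t-1)w$ can be reused in each round. The one step you pass over quickly, the divisibility of $(t-1)^{p^\alpha-1}-\sigma$ by $p$, follows by cancelling $t-1$ in the integral domain $\Z_p[t]$, exactly as the paper does in the proof of Lemma~\ref{lem:dirac}, so there is no gap.
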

  \begin{proof}
    We first prove the result for the case
    $k = 1$ and $\beta = 1$, and we set $\alpha := \alpha_1$.
    In this case, $A$ is cyclic, and we let $a$
  be a generator of $A$.
  By Lemma~\ref{lem:gen}, it is sufficient to
  show $(\tau_a - 1)^{p^{\alpha}} * f = 0$. We assume $f \neq 0$.
  Let $\Ann (f)$ be the ideal of $\Z[A]$ defined
  by $\Ann(f) := \{r \in \Z[A] \mid r * f  = 0\}$.
  Since $\exp (B) = p$, we have $p \in \Ann (f)$, and therefore
  the quotient ring $\Z[A]/\Ann (f)$ is a commutative ring of characteristic~$p$.
  Hence $(\tau_a - 1)^{p^{\alpha}} \equiv_{\Ann(f)}
  \tau_a^{p^{\alpha}} - 1 =
   \tau_{p^{\alpha} a} - 1 = \tau_0 - 1 = 1 - 1 = 0$. Thus 
   $(\tau_a - 1)^{p^{\alpha}} * f = 0$, finishing the case
   $k = 1$ and $\beta = 1$.

   Next, we consider the case $k = 1$ with $\beta \in \N_0$ arbitrary.
   Again, we set $\alpha := \alpha_1$, and we proceed by induction on $\beta$.
  If $\beta = 0$, then $|B|=1$ and hence $f$ is constant
  and therefore of degree $0$.
  Suppose $\beta \ge 1$. The group $A$ is cyclic.
  Hence by Lemma~\ref{lem:gen}, 
  it is sufficient to
  show $(\tau_a - 1)^{\mu + 1} * f = 0$, where
  $\mu = \beta (p^{\alpha} - 1)$ and $a$ is a generator of the group $A$.
  We let $C := \{b \in B \mid p\, b = 0\}$
  be the subgroup consisting of all elements of order $p$, and let
  $\pi$ be the projection of $B$ to $B/C$. $B/C$ has exponent
  $p^{\beta - 1}$, and
  $\pi \circ f$ is a function from $A$ to $B/C$.
  Therefore, $\fundeg (\pi \circ f) \le (\beta - 1) (p^{\alpha} - 1)$
  by the induction hypothesis. Thus $0 = (\tau_a - 1)^{(\beta - 1) (p^{\alpha} - 1) + 1} * (\pi \circ f) =
  (\tau_a - 1) * ((\tau_a - 1)^{(\beta - 1) (p^{\alpha} - 1)} * (\pi \circ f))$.
  Hence by Lemma~\ref{lem:gen}, 
  $(\tau_a - 1)^{(\beta - 1) (p^{\alpha} - 1)} * (\pi \circ f)$ is constant.
  Since for every $r \in \Z[A]$, $r * (\pi \circ f) =
   \pi \circ (r * f)$,
    this implies that there exists a $b \in B$ and a function
    $g : A \to C$ such that
  \[
  ((\tau_a - 1)^{(\beta - 1) (p^{\alpha} - 1)}) * f \,\, (a) = b + g(a)
  \]
  for all $a \in A$. 
  Since $C$ is of exponent $p$, we can use the case
  $\beta=1$ above to obtain  $(\tau_a - 1)^{p^{\alpha}} * g = 0$.
  Hence, for $e := p^{\alpha} + (\beta - 1) (p^{\alpha} - 1)$,
  we have $(\tau_a - 1)^e * f = 0$. Since $e = \mu + 1$, we
  have completed the proof of $\fundeg (f) \le \mu$ in the case $k=1$ for
  arbitrary $\beta \in \N_0$.

  For the case $k \ge 1$, we observe that by Theorem~\ref{thm:partialtotal},
  $\fundeg(f) \le \sum_{i=1}^k \partdeg_i (f)$.
  From its definition and the case $k=1$ , we see that $\partdeg_i (f)$
  is at most $\beta (p^{\alpha_i} - 1)$, which implies 
  $\fundeg(f) \le \sum_{i=1}^k \beta (p^{\alpha_i} - 1)$.
  \end{proof}

  This upper bound is sometimes actually reached by
  the characteristic function of an element. Although the following
  result also follows from \cite{Ka:TJRO}, we include a direct proof.
  \begin{lem} \label{lem:degchar} \label{lem:dirac}
    Let $p$ be a prime, let $\alpha_1, \ldots, \alpha_k \in \N$,
    let $A := \prod_{i=1}^k \Z_{p^{\alpha_i}}$,
       let $a \in A$, and let
   $\chi_a : A \to \Z_p$ with $\chi_a (a) = 1$ and $\chi_a (x) = 0$
   for $x \neq a$.
   Then $\fdeg (\chi_a) = \sum_{i=1}^k (p^{\alpha_i} - 1)$.
  \end{lem}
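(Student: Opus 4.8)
The plan is to prove the two inequalities separately. The upper bound $\fdeg(\chi_a) \le \sum_{i=1}^k (p^{\alpha_i}-1)$ is already available for free: since $\chi_a$ takes values in $\Z_p$, which has exponent $p$ (the case $\beta = 1$), Lemma~\ref{lem:pgroupupperbound} applies verbatim and yields exactly this bound. So the whole content lies in the matching lower bound. I would first reduce to the case $a = 0$: since $\chi_a = \tau_{-a} * \chi_0$, Lemma~\ref{lem:dprop}\eqref{it:dp1} gives $\fdeg(\chi_a) = \fdeg(\chi_0)$, so it suffices to show $\fdeg(\chi_0) \ge \sum_{i=1}^k (p^{\alpha_i}-1)$.

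Write $I := \AugZA$ and set $n := \sum_{i=1}^k (p^{\alpha_i}-1)$. Let $e_i$ be the canonical generator of the $i$-th factor $\Z_{p^{\alpha_i}}$ of $A$, and consider
\[
\sigma := \prod_{i=1}^k (\tau_{e_i} - 1)^{p^{\alpha_i}-1} \in \Z[A].
\]
This is a product of $n$ factors from $I$, hence $\sigma \in I^n$. By the remark following the definition of the functional degree, $\fdeg(\chi_0) \ge n$ as soon as $I^n * \chi_0 \neq 0$, and for this it is enough to exhibit one nonzero value of $\sigma * \chi_0$. I would compute the value at $0$.

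Expanding each factor by the binomial theorem gives $\sigma = \sum_{\vb{j}} c_{\vb{j}}\, \tau_{\sum_i j_i e_i}$, where $\vb{j} = (j_1,\dots,j_k)$ ranges over $\prod_i \{0,\dots,p^{\alpha_i}-1\}$ and $c_{\vb{j}} = \prod_i (-1)^{p^{\alpha_i}-1-j_i}\binom{p^{\alpha_i}-1}{j_i}$. The decisive observation is that there is no cancellation among the group-ring monomials: as $\vb{j}$ runs over $\prod_i \{0,\dots,p^{\alpha_i}-1\}$, the group elements $\sum_i j_i e_i$ are pairwise distinct, since $j_i$ is a representative of the $i$-th coordinate in $\Z_{p^{\alpha_i}}$. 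Consequently the coefficient of $\tau_0$ in $\sigma$ is exactly $c_{\vb{0}} = \prod_i (-1)^{p^{\alpha_i}-1} = (-1)^n$, and therefore $(\sigma * \chi_0)(0) = c_{\vb{0}} = (-1)^n$, which is $\pm 1$ and hence nonzero in $\Z_p$ for every prime $p$. Thus $\sigma * \chi_0 \neq 0$, so $I^n * \chi_0 \neq 0$, giving $\fdeg(\chi_0) \ge n$; combined with the upper bound this yields the asserted equality.

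The main obstacle is precisely this no-cancellation step: one must be certain that the single surviving coefficient is a unit modulo $p$. Choosing $\sigma$ as a product of ``one-factor'' powers $(\tau_{e_i}-1)^{p^{\alpha_i}-1}$ localizes the expansion to each cyclic component and makes the constant term transparent, and the range $0 \le j_i \le p^{\alpha_i}-1$ guarantees the distinctness that prevents collapse. The prime $p = 2$ (where $(-1)^n \equiv 1$) requires no separate treatment.
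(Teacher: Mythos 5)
Your proof is correct, and for the lower bound it takes a more direct route than the paper. The paper's argument first settles the cyclic case $k=1$ by working in $\Z_p[t]$: from $(t-1)^{p^{\alpha}}=t^{p^{\alpha}}-1$ and the integral-domain property it extracts the identity $(t-1)^{p^{\alpha}-1}=\sum_{i=0}^{p^{\alpha}-1}t^i$, pushes this through a ring homomorphism into $\Z[A]/\Ann(\chi_a)$, and evaluates to get the value $1$; it then assembles the general case as a tensor product $\chi_a=\chi_{a_1}\otimes\cdots\otimes\chi_{a_k}$ and invokes Lemma~\ref{lem:prodprop}, which requires the codomain to be a field. (The paper also records a second, non-self-contained proof via $\nu(\Z_p[A])$ from the literature together with Lemmas~\ref{lem:deltanu} and~\ref{lem:deltachi}.) You instead exhibit the single element $\sigma=\prod_{i=1}^k(\tau_{e_i}-1)^{p^{\alpha_i}-1}\in I^n$ for all $k$ at once, expand it by the binomial theorem, and observe that because the exponent ranges $0\le j_i\le p^{\alpha_i}-1$ are complete residue systems, the group-ring monomials do not collide, so $(\sigma*\chi_0)(0)$ equals the constant coefficient $(-1)^n$, a unit mod $p$. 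This buys you a uniform treatment of all $k$ without Lemma~\ref{lem:prodprop} and without the detour through $\Z_p[t]$; what you give up is the reusable multiplicativity statement of Lemma~\ref{lem:prodprop}, which the paper needs elsewhere anyway (e.g.\ in Lemma~\ref{lem:mondeg}). The reduction to $a=0$ via $\chi_a=\tau_{-a}*\chi_0$ and Lemma~\ref{lem:dprop}\eqref{it:dp1}, and the upper bound via Lemma~\ref{lem:pgroupupperbound} with $\beta=1$, match the paper's usage exactly.
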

  \begin{proof}
    From~\cite[Corollary~2.5]{Ka:TJRO}, we obtain
    $\nu (\Z_p [A]) = 1 + \sum_{i=1}^k (p^{\alpha_i} - 1)$, and
    thus by Lemma~\ref{lem:deltanu}, $\delta (A, \Z_p) =
    \sum_{i=1}^k (p^{\alpha_i} - 1)$. By Lemma~\ref{lem:deltachi},
    $\delta (A, \Z_p) = \fundeg (\chi_0^{A,1}) = \fundeg
    (\tau_{-a} * \chi_0^{A, 1}) = \fundeg (\chi_a)$.

    For a direct proof avoiding Section~\ref{sec:genbounds} and
    \cite{Ka:TJRO},
    we start with the case $k = 1$ and set $\alpha := \alpha_1$.
    In the polynomial ring $\Z_p [t]$, we have
    $(t-1) \cdot (t-1)^{p^{\alpha} - 1} =
     (t-1)^{p^{\alpha}} =
    t^{p^{\alpha}} - 1 = (t-1) \cdot \sum_{i=0}^{p^{\alpha} - 1} t^i$, and
    since $\Z_p [t]$ is an integral domain, we
    obtain $(t-1)^{p^{\alpha} - 1} = \sum_{i=0}^{p^{\alpha} - 1} t^i$.
    Let $\Ann (\chi_a) := \{r \in \Z[A] \mid r*\chi_a \} = 0$.
    Then $p \in \Ann (\chi_a)$, and thus
    $\varphi : \Z_p [t] \to  \Z  [A] / \Ann (\chi_a)$,
    $\sum_{i \in \N_0} (\gamma_i + p\Z) \, t^i \mapsto (\sum_{i \in \N_0} \gamma_i \tau_1^i) + \Ann (\chi_a)$
    is a well defined ring homomorphism.
    Hence we have
    $(\tau_1 - 1)^{p^{\alpha} - 1} * \chi_a = (\sum_{i=0}^{p^{\alpha} - 1} \tau_1^i) * \chi_a$,
    and therefore $((\tau_1 - 1)^{p^{\alpha} - 1} * \chi_a) \, (0)
    = ((\sum_{i=0}^{p^{\alpha} - 1} \tau_1^i) * \chi_a) \, (0) =
       \sum_{i = 0}^{p^{\alpha} - 1} \chi_a (0 + i) = 1$.
       Hence $\fundeg (\chi_a) \ge p^{\alpha} - 1$, and
       thus by Lemma~\ref{lem:pgroupupperbound},
       $\fundeg (\chi_a) = p^{\alpha} - 1$, which finishes the
       case $k=1$.
       For $k \ge 2$, we write $a = (a_1, \ldots, a_k)$ and
       let  $\chi_{a_i} : \Z_{p^{\alpha_i}} \to \Z_p$ be the
       characteristic function of $a_i$.
       Then 
       with the notation of Lemma~\ref{lem:prodprop},
       $\chi_a = \chi_{a_1} \otimes \cdots \otimes \chi_{a_k}$.
       Now this
        Lemma yields
        $\fundeg (\chi_a) = \sum_{i = 1}^k \fundeg (\chi_{a_i})$.
        From the case $k=1$, we infer that the last sum is
        equal to $\sum_{i = 1}^k (p^{\alpha_i}-1)$.
    \end{proof}

  For two finite abelian $p$-groups $A$ and $B$, it would be interesting to
  know the exact value of $\delta (A, B) := \max \, \{ \fundeg (f) \mid f : A \to B\}$.
  By Lemma~\ref{lem:combination} (or Lemma~\ref{lem:deltanu}),
  we may restrict ourselves that
  $B$ is a cyclic group $\Z_{p^\beta}$.
  By Lemma~\ref{lem:deltanu}, finding $\delta (A, \Z_{p^{\beta}}) + 1$
  is equivalent to the following problem:
  \begin{prb} \label{prb:nd}
    For a finite abelian $p$-group $A = \prod_{i=1}^k \Z_{p^{\alpha_i}}$ and $\beta \in \N$, find the nilpotency degree $\nu$ of the augmentation ideal of $\Z_{p^{\beta}} [A]$.
  \end{prb}
  For $\beta = 1$, this problem is solved in Lemmas~\ref{lem:pgroupupperbound}~and~\ref{lem:dirac} (and in \cite{Ka:TJRO}) with the result
  $\nu = 1 + \sum_{i = 1}^k (p^{\alpha_i} - 1)$.
  For $\beta \ge 2$, 
  the considerations at the beginning of Section~\ref{sec:bounds}
  yield
  $\nu \le \beta (1 +(\sum_{i=1}^k (p^{\alpha_i} - 1))$.
  In 
  Lemma~\ref{lem:pgroupupperbound}, this upper bound is lowered
  by $\beta - 1$ to
  $\nu \le 1 + \beta \sum_{i=1}^k (p^{\alpha_i} - 1)$.
    In the case that
    $A = \Z_{p^{\alpha}}$ is cyclic (this is the case $k=1$, and we set $\alpha := \alpha_1$),
    $\nu$ can be computed inside $\Z[x]$ as
    the smallest number such that
    $(x-1)^\nu$ lies in  $(p^\beta, x^{p^{\alpha}} - 1)$.
    From a few experiments, we hypothesize that
    then $\nu = \beta p^{\alpha} - (\beta - 1) p^{\alpha-1}$,
    which would tell
    $\delta (\Z_{p^{\alpha}}, \Z_{p^{\beta}}) =
    \beta p^{\alpha} - (\beta - 1) p^{\alpha-1} -1 $.
    For noncyclic $A$ and $\beta \ge 2$, we note that,
    unlike the case $\beta = 1$, 
    we cannot reduce the problem from arbitrary $A$ to cyclic $A$ because
    using Lemma~\ref{lem:prodprop} requires that $\Z_{p^{\beta}}$ is
    a field, i.e., $\beta = 1$.

  \section{Functions of finite degree} \label{sec:findeg}
  If $A, B$ are periodic abelian groups that are not both $p$-groups for
  the same $p$, then by Theorem~\ref{thm:deltafinite} there exist
  functions from $A$ to $B$ of infinite degree.
  Still, there are functions of finite degree, and they
  have the interesting property that they can be decomposed
  into functions on each $p$-component of $A$ and $B$.
  \begin{lem} \label{lem:fdegsplits}
    Let $K \in \N$, and let $p_1,\ldots,p_K$ be
    pairwise distinct primes.
    For each $i \in \ul{K}$, let $A_i$ and
    $B_i$ be abelian $p_i$-groups.
    Let $A := \prod_{i=1}^K A_i$ and
    $B := \prod_{i=1}^K B_i$, and let $f : A \to B$,
    $f (\vb{a}) = (f_1 (\vb{a}),\ldots, f_K (\vb{a}))$ 
    be a function of finite degree.
    Then there is a family of
    functions $(g_i)_{i \in \ul{K}}$ with
    $g_i : A_i \to B_i$ such that
    \begin{equation} \label{eq:fg}
    f (a_1,\ldots, a_K) =
    (g_1 (a_1), \ldots, g_K (a_K))
    \end{equation}
    for
    all $(a_1,\ldots, a_K) \in A$.
  \end{lem}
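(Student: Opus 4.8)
The plan is to show that each component function $f_i \colon A \to B_i$ of $f$ (so that $f = (f_1, \ldots, f_K)$) depends only on its $i$-th argument. Granting this, fix arbitrary constants — say $0$ — in all coordinates other than the $i$-th and define $g_i \colon A_i \to B_i$ by $g_i(x) := f_i(0, \ldots, 0, x, 0, \ldots, 0)$ with $x$ in the $i$-th slot; then $f_i(a_1, \ldots, a_K) = g_i(a_i)$ for every tuple, which is precisely~\eqref{eq:fg}. First I would record that each $f_i$ has finite functional degree: writing $\pi_i \colon B \to B_i$ for the $i$-th projection, which is a homomorphism and hence of degree at most $1$ by Lemma~\ref{lem:dprop}\eqref{it:dphom}, Theorem~\ref{thm:comp} gives $\fundeg(f_i) = \fundeg(\pi_i \circ f) \le \fundeg(f) < \infty$ (alternatively this is immediate from Lemma~\ref{lem:combination}).

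Next, fix $i \in \ul{K}$ and a coordinate $j \neq i$. Applying the first inequality of Theorem~\ref{thm:partialtotal} to $f_i \colon \prod_{l=1}^K A_l \to B_i$ yields $\partdeg_j(f_i) \le \fundeg(f_i) < \infty$, so every coordinate-frozen function $f_i \circ E^{(j)}_{\vb{a}} \colon A_j \to B_i$ has finite functional degree. Since $A_j$ is a $p_j$-group, $B_i$ is a $p_i$-group, and $p_j \neq p_i$, Lemma~\ref{lem:pq} forces each such $f_i \circ E^{(j)}_{\vb{a}}$ to be constant; equivalently, $\partdeg_j(f_i) = 0$. In other words, $f_i$ is independent of its $j$-th argument, and this holds for every $j \neq i$.

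Finally, having established that $f_i$ is independent of every coordinate other than the $i$-th, I would assemble these independences by changing the coordinates $a_j$ (for $j \neq i$) to $0$ one at a time: each such change leaves the value of $f_i$ unaltered, so $f_i(a_1, \ldots, a_K) = f_i(0, \ldots, 0, a_i, 0, \ldots, 0) = g_i(a_i)$, completing the proof. The conceptual heart of the argument is the application of Lemma~\ref{lem:pq} to the coordinate-frozen functions; this is exactly where both hypotheses are used — the primes being pairwise distinct makes $A_j$ and $B_i$ groups of coprime prime-power type for $j \neq i$, and the finiteness of $\fundeg(f)$ (via $\partdeg_j(f_i) \le \fundeg(f_i) < \infty$) is what licenses invoking Lemma~\ref{lem:pq} at all. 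The remaining ``change one coordinate at a time'' step is elementary and needs no further hypotheses, so I do not anticipate a genuine obstacle there.
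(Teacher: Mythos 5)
Your proposal is correct and follows essentially the same route as the paper: both arguments reduce to showing that the coordinate-frozen function from $A_j$ to $B_i$ (for $j \neq i$) has finite functional degree — via projections, insertions of constants, Theorem~\ref{thm:comp}, and Lemma~\ref{lem:dprop}\eqref{it:dphom} — and then invoke Lemma~\ref{lem:pq} to conclude it is constant. Your use of Theorem~\ref{thm:partialtotal} is only a cosmetic repackaging of the paper's direct composition $\pi_i \circ f \circ \alpha_j$, so there is nothing substantively different to compare.
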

  \begin{proof}
    We show that for all $i, j \in \ul{K}$ with $i \neq j$, $f_i$
    does not depend on its $j$th argument.
    To this end, we show that
    \begin{equation} \label{eq:fi}
      f_i (a_1,\ldots, a_K) =
      f_i (a_1,\ldots, a_{j-1}, b, a_{j+1}, \ldots, a_K)
     \end{equation}
    for all $(a_1,\ldots, a_K) \in A$ and $b \in A_j$.
    We fix $(a_1,\ldots, a_K) \in A$ and $b \in A_j$,
    and define
    the functions
    $\alpha_j : A_j \to A$,
    $x \mapsto (a_1,\ldots, a_{j-1}, x, a_{j+1}, \ldots, a_K)$,
    and $\pi_i : B \to B_i$, $(b_1,\ldots, b_K) \mapsto
    b_i$. The function $\alpha_j$ is of degree at most~$1$ by Lemma~\ref{lem:combination} and items~\eqref{it:dp3} and \eqref{it:dphom} of Lemma~\ref{lem:dprop}.
    The degree of $\pi_i$ is at most $1$
    by Lemma~\ref{lem:dprop}\eqref{it:dphom}.
    Hence by Theorem~\ref{thm:comp},
    the degree of $h := \pi_i \circ f \circ \alpha_j$ is
    finite. Since $h : A_j \to B_i$,
    Lemma~\ref{lem:pq} implies that $h$ is constant.
    Hence $h(a_j) = h(b)$, which implies~\eqref{eq:fi}.
    Thus $f_i$ does not depend on its $j$\,th argument.
    This implies that the function $f$ can be
    written in the form given in~\eqref{eq:fg}.
  \end{proof}

  Hence for finite abelian groups $A, B$, we can explicitly
  give an
  $n \in \N$ (depending on $A$ and $B$)
  such that $\fundeg (g) \le n$ or $\fundeg (g) = \infty$
  for all $g : A \to B$.
    \begin{thm}
     Let $K \in \N$, and let $p_1,\ldots,p_K$ be
    pairwise distinct primes.
    Let $A := \prod_{i=1}^K \prod_{j=1}^{L_i} \Z_{p_i^{\alpha (i,j)}}$
    and $B := \prod_{i=1}^K \prod_{j=1}^{M_i} \Z_{p_i^{\beta (i,j)}}$,
    and let $f:A \to B$.
    Then the following are equivalent:
    \begin{enumerate}
    \item \label{it:fd1} $f$ is of finite degree.
    \item \label{it:fd2} For all $i_1, i_2 \in \ul{K}$, $j_1 \in \ul{L_{i_1}}$,
      and $j_2 \in \ul{M_{i_2}}$ we have: if $i_1 \neq i_2$, then
      the $(i_2, j_2)$-th component of the 
      value
      $f (a)$ does not depend on the
      $(i_1, j_1)$-th component of $a$.
    \item \label{it:fd3} $\fundeg (f) \le \max_{i \in \ul{K}}
      (\,
        (\max_{j \in \ul{M_i}} (\beta (i,j))) \, \cdot \,
        (\sum_{j=1}^{L_i} (p^{\alpha (i,j)} - 1))
      \,)$.
    \end{enumerate}
  \end{thm}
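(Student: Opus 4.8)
The plan is to establish the cycle of implications \eqref{it:fd3}$\Rightarrow$\eqref{it:fd1}$\Rightarrow$\eqref{it:fd2}$\Rightarrow$\eqref{it:fd3}. Throughout, I would abbreviate $A_i := \prod_{j=1}^{L_i} \Z_{p_i^{\alpha(i,j)}}$ and $B_i := \prod_{j=1}^{M_i} \Z_{p_i^{\beta(i,j)}}$, so that each $A_i$ and each $B_i$ is an abelian $p_i$-group and $A = \prod_{i=1}^K A_i$, $B = \prod_{i=1}^K B_i$. This is precisely the setup of Lemma~\ref{lem:fdegsplits}, which is what makes the three conditions interchangeable. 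The implication \eqref{it:fd3}$\Rightarrow$\eqref{it:fd1} is then immediate, since the right-hand side of~\eqref{it:fd3} is a finite natural number.

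For \eqref{it:fd1}$\Rightarrow$\eqref{it:fd2}, I would simply invoke Lemma~\ref{lem:fdegsplits}: a function $f$ of finite degree splits as $f(a_1,\ldots,a_K) = (g_1(a_1),\ldots,g_K(a_K))$ with $g_i : A_i \to B_i$. The $B_{i_2}$-block of the value $f(a)$ is $g_{i_2}(a_{i_2})$, which depends only on $a_{i_2}$; hence for $i_1 \neq i_2$ it does not depend on the $(i_1,j_1)$-th component of $a$. Reading this off coordinate by coordinate is exactly statement~\eqref{it:fd2}.

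The substantive direction is \eqref{it:fd2}$\Rightarrow$\eqref{it:fd3}. First I would note that \eqref{it:fd2} forces the very same block decomposition: for each $i$ the map $F_i : A \to B_i$ sending $a$ to the $B_i$-block of $f(a)$ depends only on $a_i$, so it factors as $F_i = g_i \circ \rho_i$, where $\rho_i : A \to A_i$ is the projection and $g_i : A_i \to B_i$. Viewing $f = (F_1,\ldots,F_K)$ and applying Lemma~\ref{lem:combination} (extended from two factors to $K$ factors by a routine induction) gives $\fundeg(f) = \max_{i \in \ul{K}} \fundeg(F_i)$. Since $\rho_i$ is a group homomorphism, Lemma~\ref{lem:dprop}\eqref{it:dphom} yields $\fundeg(\rho_i) \le 1$, and Theorem~\ref{thm:comp} then gives $\fundeg(F_i) = \fundeg(g_i \circ \rho_i) \le \fundeg(g_i)$. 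Finally, the exponent of $B_i$ is $p_i^{\beta_i^*}$ with $\beta_i^* := \max_{j \in \ul{M_i}} \beta(i,j)$, so Lemma~\ref{lem:pgroupupperbound} bounds $\fundeg(g_i) \le \beta_i^* \sum_{j=1}^{L_i}(p_i^{\alpha(i,j)} - 1)$. Taking the maximum over $i$ reproduces exactly the bound in~\eqref{it:fd3}.

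I do not expect a genuine obstacle: the theorem essentially packages Lemmas~\ref{lem:fdegsplits},~\ref{lem:combination}, and~\ref{lem:pgroupupperbound} together. The only points requiring care are bookkeeping ones, namely translating the coordinate-wise independence of~\eqref{it:fd2} into the block factorization $F_i = g_i \circ \rho_i$, and justifying the step from $\fundeg(g_i \circ \rho_i)$ to $\fundeg(g_i)$, which rests solely on $\rho_i$ being a homomorphism and not on any deeper structural fact.
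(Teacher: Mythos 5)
Your proposal is correct and follows essentially the same route as the paper: Lemma~\ref{lem:fdegsplits} for \eqref{it:fd1}$\Rightarrow$\eqref{it:fd2}, and Lemma~\ref{lem:combination} combined with Lemma~\ref{lem:pgroupupperbound} for \eqref{it:fd2}$\Rightarrow$\eqref{it:fd3}. The only cosmetic difference is that you pass from $F_i$ to $g_i$ by composing with the projection homomorphism and invoking Theorem~\ref{thm:comp}, and you apply Lemma~\ref{lem:pgroupupperbound} to $B_i$ directly via its exponent $p_i^{\max_j \beta(i,j)}$, whereas the paper reaches the same bound via Theorem~\ref{thm:gfs} and partial degrees and then splits $B_i$ into its cyclic components; both are equally valid.
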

  \begin{proof}
    \eqref{it:fd1}$\Rightarrow$\eqref{it:fd2}:
    For $\vb{a} \in A$, let $f_{i,j} (\vb{a}) \in \Z_{p_i^{\beta(i,j)}}$
    be the $(i,j)$th component of $f(a)$.
    Writing $\vb{a} = (a_1, \ldots a_K)$ with
    $a_i \in \prod_{j=1}^{L_i} \Z_{p_i^{\alpha (i,j)}}$, Lemma~\ref{lem:fdegsplits}
        yields that for $i_1 \neq i_2$ and $j_2 \in \ul{M_{i_2}}$, $f_{i_2, j_2}$
        does not depend on $a_{i_1}$.
        
   \eqref{it:fd2}$\Rightarrow$\eqref{it:fd3}:
   With $A_i := \prod_{j=1}^{L_i} \Z_{p_i^{\alpha (i,j)}}$
   and $B_i := \prod_{j=1}^{M_i} \Z_{p_i^{\beta (i,j)}}$,
   condition~\eqref{it:fd2} tells that we can write 
   $f(\vb{a}) = f(a_1, \ldots, a_K) = (g_1(a_1), \ldots, g_K(a_K))$.
   Let $g_i' (a_1,\ldots, a_K) := g_i (a_i)$.
   Now by Lemma~\ref{lem:combination},
   $\fundeg(f) \le \max_{i \in \ul{K}} (\fundeg (g_i'))$.
   Let us now fix $i \in \ul{K}$.
   By Theorem~\ref{thm:gfs}, we have
   $\fundeg (g_i') \le \partdeg_i (g_i')$, which is equal to
   $\fundeg (g_i)$ by the definition of the partial degree.
   With $g_i (a_i) = (g_{i,1} (a_i), \ldots, g_{i, M_i} (a_i))$,
   Lemma~\ref{lem:combination} yields
   $\fundeg (g_i) = \max_{j \in \ul{M_i}} (\fundeg (g_{i,j}))$.
   The function $g_{i,j}$ maps $\prod_{r=1}^{L_i} \Z_{p_i^{\alpha (i,r)}}$
   into $\Z_{p_i^{\beta (i,j)}}$. Form Lemma~\ref{lem:pgroupupperbound},
   we obtain $\fundeg (g_{i, j}) \le \beta(i,j) \sum_{r=1}^{L_i} (
   p_i^{\alpha (i,r)} - 1)$.
   Thus $\fundeg (g_i) = \max_{j \in \ul{M_i}} (\fundeg (g_{i,j}))
   \le                   \max_{j \in \ul{M_i}} (
                        \beta(i,j) \sum_{r=1}^{L_i} (
                        p_i^{\alpha (i,r)} - 1))
                        =
                        (\max_{j \in \ul{M_i}} (
                        \beta(i,j))) \cdot (\sum_{j=1}^{L_i} (
                        p_i^{\alpha (i,j)} - 1))$.
 
   \eqref{it:fd3}$\Rightarrow$\eqref{it:fd1}: immediate.
   \end{proof}                     

   \section{Chevalley's Theorem}
   The properties of the functional degree developed so far
   allow us to put N.\ Alon's proof of Chevalley's Theorem
   \cite[Theorem~3.1]{Al:CN} into a more general frame.
   For functions $f_1, \ldots, f_r : A^N \to B$, we define
   \[
     V(f_1, \ldots, f_r) := \{ \vb{a} \in A^N \mid
     f_1 (\vb{a}) = \cdots = f_r (\vb{a}) = 0 \}
     \]
   to be the set of common zeroes of the $f_i$'s.
    
   \begin{thm} \label{thm:chevgroup}
    Let $p$ be a prime, let $m, n \in \N$, 
    let $\alpha_1, \ldots, \alpha_m,
    \beta_1, \ldots, \beta_n \in \N_0$, and
    let 
    $A = \prod_{i \in \ul{m}} \Z_{p^{\alpha_i}}$ and
    $B = \prod_{i \in \ul{n}} \Z_{p^{\beta_i}}$ be two finite abelian $p$-groups.
    Let $f_1, \ldots, f_r : A^N \to B$ be functions, and assume
    \[
   N \,  (\sum_{i = 1}^m (p^{\alpha_i} - 1)) > (\sum_{i = 1}^r \fundeg (f_i)) \, (\sum_{i = 1}^n (p^{\beta_i} - 1)).
    \]
    Then $V(f_1, \ldots ,f_r)$ is not a singleton.
  \end{thm}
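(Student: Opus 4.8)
The plan is to adapt N.\ Alon's degree argument to the functional-degree setting: assuming $V(f_1,\ldots,f_r)$ were a singleton, I would exhibit its indicator function as a $\Z_p$-valued function of too small a functional degree, contradicting the known degree of a one-point indicator on $A^N$. The whole argument is a bookkeeping of the inequality in the hypothesis against the degrees supplied by the earlier sections.

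First I would pass from the $B$-valued functions $f_i$ to $\Z_p$-valued \emph{vanishing indicators}. Let $\chi_0^{B,1}\colon B\to\Z_p$ be the characteristic function of $0\in B$ with value $1$, so that $\chi_0^{B,1}(b)=1$ if $b=0$ and $\chi_0^{B,1}(b)=0$ otherwise. Applying Lemma~\ref{lem:dirac} to $B=\prod_{i=1}^n\Z_{p^{\beta_i}}$ (discarding the trivial factors where $\beta_i=0$, which contribute $p^0-1=0$) gives $\fundeg(\chi_0^{B,1})=\sum_{i=1}^n(p^{\beta_i}-1)$. Each composite $\chi_0^{B,1}\circ f_i\colon A^N\to\Z_p$ is then the indicator of the zero set of $f_i$, and Theorem~\ref{thm:comp} yields $\fundeg(\chi_0^{B,1}\circ f_i)\le\bigl(\sum_{j=1}^n(p^{\beta_j}-1)\bigr)\cdot\fundeg(f_i)$. (Finiteness of each $\fundeg(f_i)$ is forced by the hypothesis, since otherwise its right-hand side would be $\infty$.)

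Next I would form the product $g:=\prod_{i=1}^r(\chi_0^{B,1}\circ f_i)$ in the commutative ring of $\Z_p$-valued functions on $A^N$. Since each factor takes values in $\{0,1\}$, a direct evaluation shows $g(\vb{x})=1$ precisely when $f_1(\vb{x})=\cdots=f_r(\vb{x})=0$ and $g(\vb{x})=0$ otherwise; that is, $g$ is the characteristic function of $V(f_1,\ldots,f_r)$ with value $1\in\Z_p$. The multiplicative property of the degree (Lemma~\ref{lem:mprop}) then gives
\[
\fundeg(g)\le\sum_{i=1}^r\fundeg(\chi_0^{B,1}\circ f_i)\le\Bigl(\sum_{j=1}^n(p^{\beta_j}-1)\Bigr)\Bigl(\sum_{i=1}^r\fundeg(f_i)\Bigr),
\]
which, by the hypothesis of the theorem, is strictly less than $N\sum_{i=1}^m(p^{\alpha_i}-1)$. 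To close the argument I would assume $V(f_1,\ldots,f_r)=\{\vb{a}\}$ is a singleton; then $g$ is exactly the characteristic function $\chi_{\vb{a}}$ of the single point $\vb{a}\in A^N$ with value $1$. Viewing $A^N=\prod_{k=1}^N\prod_{i=1}^m\Z_{p^{\alpha_i}}$ as a product of cyclic $p$-groups, Lemma~\ref{lem:dirac} gives $\fundeg(\chi_{\vb{a}})=N\sum_{i=1}^m(p^{\alpha_i}-1)$, contradicting the strict upper bound above. Hence $V(f_1,\ldots,f_r)$ cannot be a singleton.

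I expect no genuine obstacle in this final combination: the real work has been front-loaded into the earlier results. The decisive ingredient is that the degree is additive under $\otimes$ over a field (Lemma~\ref{lem:prodprop}), which is what forces the one-point indicator on $A^N$ to attain the large degree $N\sum_{i=1}^m(p^{\alpha_i}-1)$ rather than anything smaller; matched against the sub-multiplicative and sub-additive behaviour of the degree under composition (Theorem~\ref{thm:comp}) and multiplication (Lemma~\ref{lem:mprop}), the Chevalley-type inequality falls out immediately. The only points demanding a little care are the bookkeeping of possibly-trivial cyclic factors (exponents $\alpha_i$ or $\beta_i$ equal to $0$), which are simply dropped since they contribute $0$, and the verification that the pointwise product $g$ is genuinely the indicator of the common zero locus.
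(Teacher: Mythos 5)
Your proof is correct and follows essentially the same strategy as the paper's: compare the exact degree of the one-point indicator on $A^N$ (Lemma~\ref{lem:dirac}) with an upper bound for the degree of the composed vanishing indicator, and read off the contradiction from the hypothesis. The only difference is that you bound the latter by writing it as the pointwise product $\prod_{i=1}^r \chi_0^{B}\circ f_i$ and invoking Lemma~\ref{lem:mprop} together with Theorem~\ref{thm:comp}, whereas the paper applies Theorem~\ref{thm:gfs} to the single function $\chi_0^{B^r}$ via partial degrees; your variant is precisely the one the paper itself uses in the proof of Theorem~\ref{thm:warngroup}, and both yield the same bound.
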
 
   \begin{proof}
     Seeking a contradiction, we suppose that there is
     $\vb{a} \in A^N$ with
     $V(f_1, \ldots ,f_r) = \{ \vb{a} \}$. 
     Let $\chi_{\vb{a}}^{A^N} : A^N \to \Z_p$ be the characteristic function
     of $\vb{a}$ defined by $\chi_{\vb{a}} (\vb{x}) = 1$ if $\vb{x} = \vb{a}$ and
     $\chi_{\vb{a}} (\vb{x}) = 0$ else.
     Similarly, we define the characteristic function $\chi_{0}^{B^r} : B^r \to \Z_p$
     of $0$ in $B^r$.
     From the assumption $V(f_1, \ldots, f_r) = \{ \vb{a} \}$,
     we obtain
     \[
     \chi_{0}^{B^r} (f_1, \ldots, f_r) = \chi_{\vb{a}}^{A^N}.
     \]
     By Lemma~\ref{lem:degchar}, which we apply with
     $k := Nm$, we have $\fundeg (\chi_{\vb{a}}^{A^N})
     = N (\sum_{i=1}^m (p^{\alpha_i} - 1))$.
     Hence $N (\sum_{i=1}^m (p^{\alpha_i} - 1)) =
     \fundeg (\chi_{0}^{B^r} (f_1, \ldots, f_r))$.
     By Theorem~\ref{thm:gfs}, we have
     $\fundeg (\chi_{0}^{B^r} (f_1, \ldots, f_r)) \le
     \sum_{i=1}^r \partdeg_i (\chi_{0}^{B^r}) \cdot \fundeg (f_i)$.
     Since for each $i \in \ul{r}$, $\partdeg_i (\chi_{0}^{B^r})$
     is the supremum of degrees of functions from $B$ to $\Z_p$, 
     Lemma~\ref{lem:pgroupupperbound} yields
     $\partdeg_i (\chi_{0}^{B^r}) \le \sum_{j = 1}^n (p^{\beta_j} - 1)$.
     Thus
     $N (\sum_{i=1}^m (p^{\alpha_i} - 1)) \le (\sum_{i=1}^r \fundeg (f_i)) \, (\sum_{j = 1}^n (p^{\beta_j} - 1))$,
     contradicting the assumption.
   \end{proof}
   We note
   that two of the sums occurring in this theorem come
   from our computation of $\delta(A, \Z_p)$ and
   $\delta (B, \Z_p)$ in Sections~\ref{sec:genbounds}~and~\ref{sec:bounds}.
   Setting $A = B$, we obtain the following corollary.
   \begin{thm} \label{thm:chevgroupA}
     Let $p$ be a prime, let
         $A$ be a finite abelian $p$-group with $|A| > 1$, and
    let $f_1, \ldots, f_r : A^N \to A$ be functions with
    \[
   N  > \sum_{i = 1}^r \fundeg (f_i).
    \]
    Then $V(f_1, \ldots ,f_r)$ is not a singleton.
   \end{thm}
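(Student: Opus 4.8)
The plan is to obtain this statement as an immediate specialization of Theorem~\ref{thm:chevgroup} to the case $B = A$. First I would fix a decomposition $A = \prod_{i \in \ul{m}} \Z_{p^{\alpha_i}}$ and use the very same decomposition for the codomain, so that in the notation of Theorem~\ref{thm:chevgroup} we have $n = m$ and $\beta_i = \alpha_i$ for all $i$. Writing $D := \sum_{i=1}^m (p^{\alpha_i}-1)$, the two factors $\sum_{i=1}^m (p^{\alpha_i}-1)$ and $\sum_{i=1}^n (p^{\beta_i}-1)$ occurring in the hypothesis of Theorem~\ref{thm:chevgroup} then both equal $D$.

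Next I would record two elementary observations. The assumption $|A| > 1$ forces $D \ge 1$: since $|A| = \prod_{i=1}^m p^{\alpha_i} > 1$, at least one $\alpha_i$ is positive, and for such an index $D \ge p^{\alpha_i} - 1 \ge p - 1 \ge 1$. Moreover, the hypothesis $N > \sum_{i=1}^r \fundeg (f_i)$ forces every $\fundeg (f_i)$ to be finite, because $N$ is a positive integer and the sum must not exceed it.

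The key—and essentially only—step is then to multiply the given strict inequality $N > \sum_{i=1}^r \fundeg (f_i)$ by the positive integer $D$. Multiplication by a strictly positive quantity preserves strict inequality, so this yields
\[
  N \, (\sum_{i=1}^m (p^{\alpha_i}-1)) > (\sum_{i=1}^r \fundeg (f_i)) \, (\sum_{i=1}^n (p^{\beta_i}-1)).
\]
This is exactly the hypothesis of Theorem~\ref{thm:chevgroup} in the case $B = A$, so its conclusion applies verbatim and shows that $V(f_1, \ldots, f_r)$ is not a singleton.

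I do not expect any genuine obstacle here, as all of the mathematical content is already packaged in Theorem~\ref{thm:chevgroup}. The one point that deserves a moment's care is verifying $D \ge 1$, since this is precisely what guarantees that multiplying through by $D$ keeps the inequality strict; and it is exactly where the hypothesis $|A| > 1$ enters. This hypothesis is genuinely needed: for the trivial group $A = \{0\}$ the set $A^N$ is a single point, which is automatically a common zero of all the $f_i$, so $V(f_1, \ldots, f_r)$ would then be a singleton and the conclusion would fail.
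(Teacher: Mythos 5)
Your proposal is correct and follows exactly the route the paper intends: the paper derives Theorem~\ref{thm:chevgroupA} from Theorem~\ref{thm:chevgroup} simply by ``setting $A = B$'', and your verification that $|A|>1$ forces $D = \sum_{i=1}^m (p^{\alpha_i}-1) \ge 1$ (so that multiplying the strict inequality by $D$ preserves it) is precisely the small check that the paper leaves implicit.
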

   With a little more effort, one also obtains
   a new version of Warning's First Theorem.
    
   \section{Warning's First Theorem} \label{sec:warning1}
   Similar to the well known proof of
   Warning's First Theorem (cf. \cite{Ax:ZOPO, As:ANPO}),
   our proof relies on a fact on the sum of all values
   of a function of nonmaximal degree.
   We recall from Section~\ref{sec:genbounds} that for a finite abelian group
   $A$, the
   characteristic functions $\chi_{a}^{A,1}$ from $A$ to $\Z_p$
    attain the  maximal degree $\delta (A, \Z_p)$.
    \begin{lem} \label{lem:sum0}
      Let $p$ be a prime, let $A$ be a finite abelian $p$-group, let
      $\chi_0 = \chi_0^{A,1} : A \to \Z_p$ be the characteristic function of $0$,
      let $M := \fundeg (\chi_0)$, and let $f : A \to \Z_p$.
      If $\fundeg (f) < M$, then $\sum_{x \in A} f(x) = 0$.
    \end{lem}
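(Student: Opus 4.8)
The plan is to rewrite $\sum_{x\in A} f(x)$ as the action of a single element of the group ring and then to locate that element inside a sufficiently high power of the augmentation ideal. Put $\sigma := \sum_{a\in A}\tau_a \in \Z[A]$. For every $x \in A$ we have $(\sigma * f)(x) = \sum_{a\in A} f(x+a) = \sum_{y\in A} f(y)$, so $\sigma * f$ is the constant function with value $\sum_{y \in A} f(y) \in \Z_p$. Hence it suffices to prove $\sigma * f = 0$.

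Set $I := \AugZA$. Since $\fundeg(f) < M$, we have $\fundeg(f) \le M-1$ and therefore $I^M * f = 0$. The key step is to show that $\sigma$ agrees, modulo $p$, with an element of $I^M$. To this end I would write $A = \prod_{i=1}^k \Z_{p^{\alpha_i}}$, choose a generator $a_i$ of the $i$th factor, and set $u_i := \tau_{a_i} - 1 \in I$. Because every element of $A$ is uniquely $\sum_{i} j_i a_i$ with $0 \le j_i < p^{\alpha_i}$, commutativity of $\Z[A]$ gives the factorisation
\[
  \sigma = \prod_{i=1}^k \Big( \sum_{j=0}^{p^{\alpha_i}-1} \tau_{a_i}^{\,j} \Big).
\]
Working in $\Z_p[A]$ (that is, modulo $p$), the Frobenius identity yields $u_i^{p^{\alpha_i}} = (\tau_{a_i}-1)^{p^{\alpha_i}} = \tau_{a_i}^{p^{\alpha_i}} - 1 = \tau_0 - 1 = 0$, while the polynomial identity $u\sum_{j=0}^{N-1}(1+u)^j = (1+u)^N - 1$ in the integral domain $\Z_p[u]$, specialised at $N = p^{\alpha_i}$, reads $u\sum_{j=0}^{p^{\alpha_i}-1}(1+u)^j = u^{p^{\alpha_i}}$; cancelling $u$ in $\Z_p[u]$ and then applying the ring homomorphism $\Z_p[u]\to\Z_p[A]$, $u\mapsto u_i$, gives
\[
  \sum_{j=0}^{p^{\alpha_i}-1} \tau_{a_i}^{\,j} \equiv u_i^{\,p^{\alpha_i}-1} \pmod{p}.
\]
Multiplying these congruences over $i \in \ul{k}$ shows $\sigma \equiv r \pmod{p}$, where $r := \prod_{i=1}^k u_i^{\,p^{\alpha_i}-1} \in \Z[A]$ is a product of $\sum_{i=1}^k (p^{\alpha_i}-1)$ factors taken from $I$. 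By Lemma~\ref{lem:pgroupupperbound} (applied with $\beta = 1$) we have $M = \fundeg(\chi_0) \le \sum_{i=1}^k(p^{\alpha_i}-1)$, so $r \in I^{\sum_i(p^{\alpha_i}-1)} \subseteq I^M$.

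Finally I would combine the two observations. Writing $\sigma = r + p\,s$ with $s \in \Z[A]$, we obtain $\sigma * f = r*f + p\,(s*f)$. Here $r*f = 0$ because $r \in I^M$ and $I^M * f = 0$, while $p\,(s*f) = 0$ because $s*f$ takes values in $\Z_p$ and multiplication by the integer $p$ annihilates every $\Z_p$-valued function. Hence $\sigma * f = 0$, and by the first paragraph $\sum_{x\in A} f(x) = 0$. I expect the main obstacle to be the middle step: identifying $\sigma$, modulo $p$, with the top-degree monomial $\prod_i u_i^{p^{\alpha_i}-1}$ generating the highest nonzero power of the augmentation ideal, which rests on the characteristic-$p$ cancellation $\sum_{j} \tau_{a_i}^{\,j} \equiv (\tau_{a_i}-1)^{p^{\alpha_i}-1}$ already used in the proof of Lemma~\ref{lem:dirac}.
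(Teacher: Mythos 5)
Your proof is correct, and it takes a genuinely different route from the one in the paper. You reduce everything to the single group-ring element $\sigma=\sum_{a\in A}\tau_a$ (the norm element), observe that $\sigma*f$ is the constant $\sum_{y}f(y)$, and then show $\sigma\equiv\prod_i(\tau_{a_i}-1)^{p^{\alpha_i}-1}\pmod p$ via the characteristic-$p$ geometric-series identity, so that $\sigma$ lies in $I^{M}+p\,\Z[A]$ and annihilates $f$; the only external inputs are the structure theorem for finite abelian $p$-groups and the upper bound $M\le\sum_i(p^{\alpha_i}-1)$ from Lemma~\ref{lem:pgroupupperbound}. The paper instead argues linear-algebraically: using the expansion $g=\sum_x g(x)(\tau_{-x}*\chi_0)$ it shows that $I*\chi_0$ has codimension one in $\Z_p^A$ and hence equals the space $S$ of functions of degree $<M$, writes $f=i*\chi_0$ with $i\in I$, and kills the sum term by term using translation invariance of $\sum_{x\in A}$. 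Your argument is more explicit and somewhat shorter, and it localizes the computation in the element $\sigma$ rather than in the function $f$; the paper's argument is coordinate-free (it never decomposes $A$ into cyclic factors and does not need the numerical value or bound for $M$, only that $\chi_0$ attains the maximal degree) and as a by-product identifies $I*\chi_0$ with the space of all functions of non-maximal degree, which is of independent interest. Both are complete proofs; your mod-$p$ identification of $\sigma$ with the top-degree product is exactly the cancellation already used in the paper's proof of Lemma~\ref{lem:dirac}, so it fits naturally into the paper's toolkit.
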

    \begin{proof}
      Every $g \in \Z_p^A$ satisfies
      \begin{equation} \label{eq:g1}
        g = \sum_{x \in A} g (x) (\tau_{-x} * \chi_0),
      \end{equation}
      and therefore (or by Lemma~\ref{lem:deltachi})
      $\fundeg (g) \le M$.  
      Let $I := \AugZA$. Clearly,
      $I * \chi_0 = \{ i * \chi_0 \mid i \in I\}$ is a subvectorspace
      of $\Z_p^A$. Let $D$ be the subvectorspace of
      $\Z_p^A$ generated by $\chi_0$.
      We will first prove
      \begin{equation} \label{eq:vspace}
      I * \chi_0 + D = \Z_p^A.
      \end{equation}
      Let $h \in \Z_p^A$. From~\eqref{eq:g1}, we obtain
      $r \in \Z[A]$ with $r * \chi_0 = h$. Now take
      $z \in \Z$ such  that $r - z \tau_0 \in \AugZA$.
      Let $i := r - z \tau_0$.
      Then $h = r * \chi_0 = i * \chi_0 + (z \tau_0)* \chi_0 =
      i * \chi_0 + z \chi_0$.
      Hence $h \in I * \chi_0 + D$, which concludes the proof
      of~\eqref{eq:vspace}.
      Let \[S := \{ f \in \Z_p^A \mid \fundeg (f) < M\}. \]
      Then $S$ is a subvectorspace of $\Z_p^A$, and since
      $\chi_0 \not\in S$, we have $S \neq \Z_p^A$.
      By Lemma~\ref{lem:dprop}\eqref{it:dp2}, we have
      $I * \chi_0 \subseteq S$. By~\eqref{eq:vspace},
      $I * \chi_0$ is of codimension at most $1$, and therefore
      $I * \chi_0 = S$. Now by the assumptions $f \in S$,
      and therefore $f \in I * \chi_0$, and thus
      we have $i \in I$ with
      $f = i * \chi_0$.
      We can write $i = \sum_{a \in A} (\tau_a - 1) r_a$ with
      $r_a \in \Z[A]$ for each $a \in A$.
      Now
      \[
        \begin{split}
          \sum_{x \in A} f(x) &=
      \sum_{x \in A} \sum_{a \in A} (((\tau_a - 1) r_a * \chi_0) \, (x)) \\ &=
      \sum_{a \in A} \sum_{x \in A} (((\tau_a - 1) r_a * \chi_0) \, (x)).
        \end{split}
      \]  
      We will now show that for each $a \in A$, the corresponding summand
      is $0$.
      To this end, we compute
      \[
      \begin{split}
        \sum_{x \in A} (((\tau_a - 1) r_a * \chi_0) \, (x)) &=
           \sum_{x \in A} (\tau_a * (r_a * \chi_0)) \, (x) -
           \sum_{x \in A} (r_a * \chi_0) \, (x) \\
           &= \sum_{x \in A} (r_a * \chi_0) \, (x + a) -
           \sum_{x \in A} (r_a * \chi_0) \, (x).
      \end{split}     
      \]
      Since the mapping $x \mapsto x + a$ is a bijection on the group $A$,
      we have $\sum_{x \in A} (r_a * \chi_0) \, (x + a) =
               \sum_{x \in A} (r_a * \chi_0) \, (x)$.
               Hence the summand corresponding to $a$ is $0$, which proves
               $\sum_{x \in A} f(x) = 0$.
  \end{proof}             
    Now we can improve Theorems~\ref{thm:chevgroup} and~\ref{thm:chevgroupA}.
    \begin{thm} \label{thm:warngroup}
    Let $p$ be a prime, let $m, n \in \N$, 
    let $\alpha_1, \ldots, \alpha_m,
    \beta_1, \ldots, \beta_n \in \N_0$, and
    let 
    $A = \prod_{i \in \ul{m}} \Z_{p^{\alpha_i}}$ and
    $B = \prod_{i \in \ul{n}} \Z_{p^{\beta_i}}$ be two finite abelian $p$-groups.
    Let $f_1, \ldots, f_r : A^N \to B$ be functions, and assume
    \[
   N \,  (\sum_{i = 1}^m (p^{\alpha_i} - 1)) > (\sum_{i = 1}^r \fundeg (f_i)) \, (\sum_{i = 1}^n (p^{\beta_i} - 1)).
    \]
    Then $p$ divides $|V(f_1, \ldots ,f_r)|$.
   \end{thm}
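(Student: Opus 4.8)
The plan is to adapt the classical proof of Warning's First Theorem, with functional degree in place of total degree and the $p$-group $A$ in place of a finite field. Exactly as in the proof of Theorem~\ref{thm:chevgroup}, I would let $\chi_0^{B^r} : B^r \to \Z_p$ be the characteristic function of $0$ in $B^r$ and define $G : A^N \to \Z_p$ by $G(\vb{x}) := \chi_0^{B^r}(f_1(\vb{x}), \ldots, f_r(\vb{x}))$. Then $G$ is precisely the characteristic function of $V(f_1,\ldots,f_r)$, taking the value $1$ on common zeroes and $0$ elsewhere. Hence
\[
\sum_{\vb{x} \in A^N} G(\vb{x}) = |V(f_1,\ldots,f_r)| \cdot 1
\]
in $\Z_p$, so the left-hand side is the residue of $|V(f_1,\ldots,f_r)|$ modulo $p$. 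The whole theorem thus reduces to showing that this sum vanishes in $\Z_p$.

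Next I would bound the functional degree of $G$, copying the estimate from the proof of Theorem~\ref{thm:chevgroup}. Theorem~\ref{thm:gfs} gives $\fundeg(G) \le \sum_{i=1}^r \partdeg_i(\chi_0^{B^r}) \cdot \fundeg(f_i)$, and since each $\partdeg_i(\chi_0^{B^r})$ is a supremum of degrees of functions from $B$ to $\Z_p$, Lemma~\ref{lem:pgroupupperbound} yields $\partdeg_i(\chi_0^{B^r}) \le \sum_{j=1}^n (p^{\beta_j}-1)$. Therefore
\[
\fundeg(G) \le \Big(\sum_{i=1}^r \fundeg(f_i)\Big)\Big(\sum_{j=1}^n (p^{\beta_j}-1)\Big).
\]

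The key step is to recognize the correct threshold. Writing $A^N$ as a product of $Nm$ cyclic $p$-groups, the computation $\delta(A,\Z_p)=\sum_{i=1}^k(p^{\alpha_i}-1)$ recalled at the start of Section~\ref{sec:bounds} gives $\delta(A^N,\Z_p) = N\sum_{i=1}^m(p^{\alpha_i}-1)$, and by Lemma~\ref{lem:deltachi} this equals $M := \fundeg(\chi_0^{A^N,1})$, the functional degree of the characteristic function of $0$ in $A^N$. The hypothesis of the theorem now reads exactly $\fundeg(G) < M$. Applying Lemma~\ref{lem:sum0} to the finite abelian $p$-group $A^N$ and the function $G$, I conclude $\sum_{\vb{x}\in A^N} G(\vb{x}) = 0$ in $\Z_p$, whence $p$ divides $|V(f_1,\ldots,f_r)|$.

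The only point requiring care is identifying $N\sum_{i=1}^m(p^{\alpha_i}-1)$ with the maximal degree $M=\delta(A^N,\Z_p)$, so that Lemma~\ref{lem:sum0} becomes applicable; this is where the strict inequality in the hypothesis is essential, as it guarantees $\fundeg(G)<M$ rather than merely $\fundeg(G)\le M$. Beyond this bookkeeping there is no real obstacle, since all three ingredients—the degree estimate for composites (Theorem~\ref{thm:gfs}), the upper bound on degrees of maps into $\Z_p$ (Lemma~\ref{lem:pgroupupperbound}), and the vanishing of sums of functions of non-maximal degree (Lemma~\ref{lem:sum0})—are already available. Compared with Theorem~\ref{thm:chevgroup}, the first two paragraphs are essentially identical; the genuinely new content is replacing the singleton-contradiction argument by the counting statement of Lemma~\ref{lem:sum0}.
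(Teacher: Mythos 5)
Your proposal is correct and follows essentially the same route as the paper: express the residue of $|V(f_1,\ldots,f_r)|$ modulo $p$ as $\sum_{\vb{x}\in A^N}\chi_0^{B^r}(f_1,\ldots,f_r)(\vb{x})$, bound the functional degree of this function by $(\sum_i \fundeg(f_i))(\sum_j(p^{\beta_j}-1))$, identify $N\sum_i(p^{\alpha_i}-1)$ as the degree of $\chi_0^{A^N}$, and invoke Lemma~\ref{lem:sum0}. The only (immaterial) difference is that you obtain the degree bound via Theorem~\ref{thm:gfs} and partial degrees, exactly as in the proof of Theorem~\ref{thm:chevgroup}, whereas the paper factors $\chi_0^{B^r}(f_1,\ldots,f_r)=\prod_{i=1}^r\chi_0^B\circ f_i$ and uses Lemma~\ref{lem:mprop}; both yield the same estimate.
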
 
     \begin{proof}
       Let $\chi_{0}^{B^r} : B^r \to \Z_p$
       be the characteristic function of $0$ in $B^r$.
       Then
       \begin{equation} \label{eq:thesum1}
       \sum_{\vb{x} \in A^N} \chi_{0}^{B^r} (f_1, \ldots, f_r) \, (\vb{x})
       \end{equation}
       is the congruence class of $|V(f_1, \ldots, f_r)|$ modulo $p$.
       It is therefore sufficient to prove that~\eqref{eq:thesum1}
       is $0$.
       Let $M$ be the functional degree of the characteristic function
       $\chi_{0}^{A^N} : A^N \to \Z_p$.
       From Lemma~\ref{lem:degchar}, we obtain
       $M = N \sum_{i=1}^m (p^{\alpha_i} - 1)$.
       We have
       $\fundeg (\chi_{0}^{B^r} (f_1, \ldots, f_r)) =
       \fundeg (\prod_{i=1}^r \chi_0^{B} \circ f_i)$,
       where $\chi_0^B : B \to \Z_p$ is the characteristic function
       of $0$ on $B$.
       By Lemma~\ref{lem:mprop}, $\fundeg (\prod_{i=1}^r \chi_0^{B} \circ f_i)
       \le
       \sum_{i=1}^r \fundeg (\chi_0^{B}) \fundeg (f_i)$,
       which by Lemma~\ref{lem:pgroupupperbound} is 
       $\le \sum_{i=1}^r \fundeg (f_i) \sum_{j = 1}^n (p^{\beta_j} - 1))$.
       By the assumption, this last expression is less than
       $ N \,  (\sum_{i = 1}^m (p^{\alpha_i} - 1)) = M$.
       Now Lemma~\ref{lem:sum0} yields
       $\sum_{\vb{x} \in A^N} \chi_{0}^{B^r} (f_1, \ldots, f_r) \, (\vb{x}) = 0$.
     \end{proof}
     Again by setting $A = B$, we obtain:
     \begin{thm} \label{thm:warngroupA}
     Let $p$ be a prime, let
         $A$ be a finite abelian $p$-group with $|A| > 1$, and
    let $f_1, \ldots, f_r : A^N \to A$ be functions with
    \[
   N  > \sum_{i = 1}^r \fundeg (f_i).
    \]
    Then $p$ divides $|V(f_1, \ldots ,f_r)|$.
   \end{thm}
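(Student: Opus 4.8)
The plan is to deduce this statement directly from Theorem~\ref{thm:warngroup} by specializing that result to the case $B = A$. First I would invoke the structure theorem for finite abelian groups to write the $p$-group $A$ as $A = \prod_{i \in \ul{m}} \Z_{p^{\alpha_i}}$ for suitable $m \in \N$ and $\alpha_1, \ldots, \alpha_m \in \N_0$. Taking $B := A$ then amounts to choosing $n := m$ and $\beta_i := \alpha_i$ in Theorem~\ref{thm:warngroup}, so that the two ``weight'' sums $\sum_{i=1}^m (p^{\alpha_i}-1)$ and $\sum_{i=1}^n (p^{\beta_i}-1)$ occurring there become equal; write $D := \sum_{i=1}^m (p^{\alpha_i} - 1)$ for this common value.

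Next I would verify that $D \ge 1$. Because $|A| > 1$, at least one exponent $\alpha_i$ is positive, whence $p^{\alpha_i} - 1 \ge p - 1 \ge 1$ and thus $D \ge 1$. This is the only place where the hypothesis $|A| > 1$ enters, and it is exactly what keeps the argument from degenerating.

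The final step is to check that the hypothesis $N > \sum_{i=1}^r \fundeg (f_i)$ of the present theorem implies the hypothesis $N\,D > (\sum_{i=1}^r \fundeg (f_i))\,D$ required by Theorem~\ref{thm:warngroup}. Since $N$ is a natural number strictly exceeding $\sum_{i=1}^r \fundeg (f_i)$, this sum is finite; writing $S := \sum_{i=1}^r \fundeg (f_i)$, we have a nonnegative integer $S$ with $N \ge S + 1$. Multiplying by the positive integer $D$ yields $N\,D \ge (S+1)\,D = S\,D + D > S\,D$, which is precisely the inequality needed. Applying Theorem~\ref{thm:warngroup} then gives that $p$ divides $|V(f_1, \ldots, f_r)|$. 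I expect no genuine obstacle here: the derivation is a routine specialization, and the sole subtlety is confirming $D \ge 1$ so that multiplication by $D$ preserves the strict inequality.
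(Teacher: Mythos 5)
Your proposal is correct and matches the paper's own derivation: the paper obtains Theorem~\ref{thm:warngroupA} precisely by setting $B = A$ in Theorem~\ref{thm:warngroup}, and you have simply spelled out the (routine) verification that the hypothesis transfers, including the one point worth checking, namely that $|A|>1$ makes the common sum $D=\sum_{i=1}^m(p^{\alpha_i}-1)$ positive so that multiplying $N>\sum_i\fundeg(f_i)$ by $D$ preserves strictness.
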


  We apply this Theorem to polynomial functions over not necessarily
  commutative rings. For such a ring $R$
  and $X = \{x_1, \ldots, x_n\}$, a \emph{monomial}
  is a nonempty word over the alphabet $R \cup X$.
  We denote the set of nonempty words over the alphabet $R \cup X$
  by $(R \cup X)^+$.
  A \emph{polynomial
    expression over $R$ in the variables $X$} is a sum $\sum_{m \in (R \cup X)^+} z_m m$ with $z_m \in \Z$
  and only finitely many $z_m \neq 0$.
  The \emph{degree} of a monomial $y_1y_2\ldots y_k$ is defined
  as $\# \{i \in \ul{k} \mid y_i \in X \}$.
    The degree
  of $\sum_{m \in (R \cup X)^+} z_m m$ is the  defined as the maximal degree of
  those monomials $m$ with $z_m \neq 0$; we set $\deg (0) := 0$. 
  As an example, let $R$ be the ring $\matrixring{2}{\Z}$  of $2 \times 2$-matrices over
  $\Z$. Then the degree of the polynomial expression
  $g = 5\, x_1 \MatTwo{1}{-2}{3}{5} x_1 x_2 \MatTwo{1}{0}{0}{1} x_2 +
  0 \, x_1 x_1 x_2 x_3 \MatTwo{1}{0}{0}{-1}  x_7 +
  2 \, x_1 \MatTwo{2}{8}{7}{6}$ is $\max \, (4, 1) = 4$.
    For a polynomial expression $f$, we write $\deg (f)$ for its degree.
  A polynomial expression $f$  in the variables $x_1,\ldots,x_n$ induces
  a function $\ol{f} : R^n \to R$; in the example above,
  the function $\ol{g}$ induced
  by $g$ is given by $\ol{g}(x_1, \ldots, x_7) = 5 x_1 \MatTwo{1}{-2}{3}{5} x_1 x_2^2 +
  2 \, x_1 \MatTwo{2}{8}{7}{6}$ for all $\vb{x} \in \matrixring{2}{\Z}^7$.
  \begin{lem} \label{lem:syntdeg}
    Let $R$ be a ring, let $f$ be a polynomial expression over $R$, and
    let $\ol{f}$ be the function that $f$ induces in $R$.
    Then $\fundeg (\ol{f}) \le \deg (f)$.
  \end{lem}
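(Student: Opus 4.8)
The plan is to reduce the statement to the two structural results already established: the behaviour of the functional degree under addition (Lemma~\ref{lem:aprop}) and under multiplication into a ring (Lemma~\ref{lem:mprop}). Write $A := R^n$, regarded as an abelian group under coordinatewise addition, so that $\ol{f} \colon A \to R$ and $\fundeg(\ol{f})$ is computed with respect to this additive structure. Set $I := \AugZA$.

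First I would dispose of the sum. The polynomial expression $f = \sum_{m} z_m m$ is a finite $\Z$-linear combination of monomials, so the induced function is $\ol{f} = \sum_m z_m \ol{m}$. Since $i * (z_m \ol{m}) = z_m (i * \ol{m})$ for every $i \in \Z[A]$, multiplication by the integer $z_m$ cannot raise the functional degree, i.e.\ $\fundeg(z_m \ol{m}) \le \fundeg(\ol{m})$. Applying Lemma~\ref{lem:aprop}\eqref{it:d1} to the finitely many summands then yields
\[
  \fundeg(\ol{f}) \le \max_{z_m \neq 0} \fundeg(\ol{m}),
\]
so that it suffices to bound the functional degree of a single monomial by its degree.

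Next I would treat a single monomial $m = y_1 y_2 \cdots y_k$, where each letter $y_i$ lies in $R \cup X$. Its induced function factors as a pointwise product $\ol{m} = g_1 \cdot g_2 \cdots g_k$, where $g_i \colon A \to R$ sends $\vb{a} = (a_1,\ldots,a_n)$ to the constant $y_i$ if $y_i \in R$, and to the coordinate $a_j$ if $y_i = x_j$. A constant function has functional degree $0$ by Lemma~\ref{lem:dprop}\eqref{it:dp3}, while a coordinate projection $\vb{a} \mapsto a_j$ is a homomorphism of the additive groups and hence has functional degree at most $1$ by Lemma~\ref{lem:dprop}\eqref{it:dphom}. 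Applying the multiplicative estimate of Lemma~\ref{lem:mprop} inductively along the factorisation $(g_1 \cdots g_{k-1}) \cdot g_k$ gives $\fundeg(\ol{m}) \le \sum_{i=1}^k \fundeg(g_i)$. Only the letters $y_i \in X$ can contribute a summand, each of size at most $1$, so this bound is at most $\#\{ i \in \ul{k} \mid y_i \in X\} = \deg(m)$. Combining with the reduction above, $\fundeg(\ol{f}) \le \max_{z_m \neq 0} \deg(m) = \deg(f)$, the zero polynomial being covered by the convention $\deg(0)=0$ together with the fact that $\ol{0}$ is constant.

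The argument is essentially bookkeeping once Lemmas~\ref{lem:aprop} and~\ref{lem:mprop} are in hand; the only point demanding genuine care is the inductive use of Lemma~\ref{lem:mprop} for products of more than two factors over a \emph{noncommutative} ring, where one must keep the factors in their prescribed left-to-right order rather than regroup them freely. Because Lemma~\ref{lem:mprop} is stated for an arbitrary (not necessarily commutative) ring $R$ and the functional degree only sees the additive group of $R$, this associative induction goes through without any commutativity hypothesis.
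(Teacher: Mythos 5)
Your proposal is correct and follows essentially the same route as the paper: decompose into monomials, bound each monomial via Lemma~\ref{lem:mprop} using that constants have degree $0$ and coordinate projections degree at most $1$, and combine with Lemma~\ref{lem:aprop}. The only cosmetic difference is that you justify $\fundeg(z_m\,\ol{m}) \le \fundeg(\ol{m})$ directly from the module action, whereas the paper composes with the endomorphism $y \mapsto z_m y$ and invokes Lemma~\ref{lem:dprop}\eqref{it:dphom} together with Theorem~\ref{thm:comp}; both are valid.
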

  \begin{proof}
  We assume $|R| > 1$. Let $f = \sum_{m \in I} z_m m$ for some finite set
  of monomials $I$ over $R \cup \{x_1,\ldots, x_n\}$. We assume $I \neq \emptyset$
  and that $z_m \neq 0$ for all $m \in I$. 
  The functional degree of the function induced
  by a variable $x_i$ is $1$, and the functional degree of a constant function
  is $0$. Lemma~\ref{lem:mprop} yields that $\fundeg (\ol{m}) \le \deg (m)$
  for every $m \in I$.
  Now Lemma~\ref{lem:dprop}\eqref{it:dphom} and Theorem~\ref{thm:comp} yield
  $\fundeg (z_m \, \ol{m}) \le \fundeg(\ol{m}) \le \deg(m)$.
  Applying Lemma~\ref{lem:aprop}, we obtain
  $\fundeg (\ol{f}) = \fundeg (\sum_{m \in I} z_m \, \ol{m}) \le
  \max_{m \in I} \fundeg (z_m \ol{m}) \le
  \max_{m \in I} \deg (m) = \deg (f)$.
  \end{proof}
  
  \begin{thm} \label{thm:warnnoncommring}
    Let $p$ be a prime, let $\alpha \in \N$, 
    let $R$ be a (not necessarily commutative) finite ring with $|R| = p^{\alpha}$, let $N \in \N$, let
    $X = \{x_1, \ldots, x_N\}$, and let $f_1, \ldots, f_r$ be polynomial
    expressions over $R$ in the variables $X$.
    If
    \(
    N > \sum_{i = 1}^r \deg (f_i),
    \)
    then 
    $p$ divides $|V(f_1, \ldots ,f_r)|$.
  \end{thm}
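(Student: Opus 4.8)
The plan is to reduce this statement to Theorem~\ref{thm:warngroupA} by viewing the induced functions $\ol{f_i}$ as functions between the additive $p$-group $\algop{R}{+}$ and itself. The only facts I need are that the functional degree of each $\ol{f_i}$ is bounded by its syntactic degree (Lemma~\ref{lem:syntdeg}) and that the abelian-group version of Warning's First Theorem applies verbatim once that degree bound is in hand.

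First I would set $A := \algop{R}{+}$, the additive group of $R$. Since $|R| = p^{\alpha}$ with $\alpha \ge 1$, the group $A$ is a finite abelian $p$-group with $|A| = p^{\alpha} > 1$. Each polynomial expression $f_i$ induces a function $\ol{f_i} : R^N \to R$, which we may regard as a function from $A^N$ to $A$. The set of common zeroes $V(f_1, \ldots, f_r)$ is, by definition, the set $\{ \vb{a} \in R^N \mid \ol{f_1}(\vb{a}) = \cdots = \ol{f_r}(\vb{a}) = 0 \}$, so it coincides with $V(\ol{f_1}, \ldots, \ol{f_r})$ in the sense of the earlier definition, and in particular $|V(f_1, \ldots, f_r)| = |V(\ol{f_1}, \ldots, \ol{f_r})|$.

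Next I would invoke Lemma~\ref{lem:syntdeg} to obtain $\fundeg (\ol{f_i}) \le \deg (f_i)$ for each $i \in \ul{r}$. Summing and using the hypothesis $N > \sum_{i=1}^r \deg (f_i)$ gives $\sum_{i=1}^r \fundeg (\ol{f_i}) \le \sum_{i=1}^r \deg (f_i) < N$. Thus the hypotheses of Theorem~\ref{thm:warngroupA} are met for the functions $\ol{f_1}, \ldots, \ol{f_r} : A^N \to A$, and that theorem yields that $p$ divides $|V(\ol{f_1}, \ldots, \ol{f_r})|$, which equals $|V(f_1, \ldots, f_r)|$.

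There is essentially no obstacle here, since the work has been front-loaded into Lemma~\ref{lem:syntdeg} and Theorem~\ref{thm:warngroupA}. The one conceptual point worth flagging is that the functional degree and the entire module action of $\Z[A]$ depend only on the additive group of $R$ and are insensitive to whether the multiplication of $R$ is commutative; this is precisely what allows the abelian-group machinery to absorb the noncommutative case without modification, so that no separate argument for noncommutativity is required.
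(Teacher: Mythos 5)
Your proposal is correct and follows exactly the paper's own argument: set $A$ to be the additive group of $R$, apply Lemma~\ref{lem:syntdeg} to get $\sum_{i=1}^r \fundeg(\ol{f_i}) \le \sum_{i=1}^r \deg(f_i) < N$, and conclude via Theorem~\ref{thm:warngroupA}. No gaps.
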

  \begin{proof}
    We want to apply Theorem~\ref{thm:warngroupA} setting $A$
    to be the
    additive group of $R$.
    By the assumption, we have
    $N > \sum_{i = 1}^r \deg (f_i)$, and thus by Lemma~\ref{lem:syntdeg},
    $N > \sum_{i = 1}^r \fundeg (\ol{f_i})$.
    Now Theorem~\ref{thm:warngroupA} yields that $p$ divides
    $|V(f_1, \ldots ,f_r)|$.
  \end{proof}     
    
\section{The functional degree of polynomial functions}
  In this section, we will compute the functional
  degree of polynomial functions on a field $F$.
  We denote the \emph{total degree} of a polynomial $f \in F[x_1, \ldots, x_n]$
by $\deg (f)$; here, for $c \in F \setminus \{0\}$,
the total degree of a monomial
$m = c \prod_{i=1}^n x_i^{\alpha_i}$ is defined by
$\deg (m) := \sum_{i = 1}^n \alpha_i$; the total degree of a polynomial
is the maximum of the total degrees of its monomials, and $\deg (0)$
is additionally set to $0$.
For a field of characteristic $p$, we will also use the
\emph{$p$-weight degree} that was defined in \cite{MM:IOTC}:
By $s_p (n)$, we denote the digit sum of $n$ in base $p$.
For $c \in F \setminus \{0\}$ and
$\alpha_1, \ldots, \alpha_n \in \N_0$, the
the $p$-weight degree of a monomial $m = c \prod_{i=1}^n x_i^{\alpha_i}$
is defined
by
\[
\pdeg (c \prod_{i=1}^n x_i^{\alpha_i}) := \sum_{i=1}^n s_p (\alpha_i).
\]
The $p$-weight degree of a polynomial is the maximum of the $p$-weight degrees
of its monomials, and we set $\pdeg (0) := 0$.
In Theorem~\ref{thm:fp}, we will see that a
polynomial $f$ over a finite field $F$ which is reduced
modulo all $x_i^{|F|} - x_i$ induces a function of functional
degree $\pdeg (f)$. 
    \begin{lem} \label{lem:xn}
      Let $p$ be a prime, and
      let 
    $F$ be a field of characteristic $p$. For each $n \in \N_0$ and $x \in F$,
    let $f_n (x) := x^n$.
    Then \begin{enumerate}
          \item \label{it:w1} $\fundeg (f_n) \le s_p (n)$;
          \item \label{it:w2} If $F$ is finite and $n < |F|$, then $\fundeg (f_n) = s_p (n)$.
         \end{enumerate}
  \end{lem}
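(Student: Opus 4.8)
The plan is to prove the two parts by different means: the upper bound in~\eqref{it:w1} comes from the multiplicative behaviour of the functional degree, while the equality in~\eqref{it:w2} is obtained by adding a matching lower bound via an explicit finite-difference computation. For~\eqref{it:w1}, I would write $n$ in base $p$ as $n = \sum_{i \ge 0} a_i p^i$ with $0 \le a_i < p$, so that $s_p(n) = \sum_i a_i$ and $f_n(x) = x^n = \prod_i (x^{p^i})^{a_i}$. Since $F$ has characteristic $p$, each iterated Frobenius map $\phi_i \colon x \mapsto x^{p^i}$ is an endomorphism of the additive group $(F,+)$, so $\fundeg(\phi_i) \le 1$ by Lemma~\ref{lem:dprop}\eqref{it:dphom}. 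Viewing $f_n$ as a product of functions into the ring $F$ and applying Lemma~\ref{lem:mprop} repeatedly, we get $\fundeg((x^{p^i})^{a_i}) \le a_i$ and then $\fundeg(f_n) \le \sum_i a_i = s_p(n)$.

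For the lower bound in~\eqref{it:w2}, set $s := s_p(n)$ and aim to exhibit elements $c_1, \ldots, c_s \in F$ with $(\prod_{k=1}^s (\tau_{c_k} - 1)) * f_n \ne 0$; since $\prod_{k=1}^s (\tau_{c_k} - 1) \in (\AugZA)^s$, this shows $(\AugZA)^s * f_n \neq 0$ and hence $\fundeg(f_n) \ge s$. Evaluating at $0$, we have
\[
\Big(\big(\prod_{k=1}^s (\tau_{c_k} - 1)\big) * f_n\Big)(0) = \sum_{\epsilon \in \{0,1\}^s} (-1)^{s - |\epsilon|} \Big(\sum_{k=1}^s \epsilon_k c_k\Big)^{\!n},
\]
where $|\epsilon| := \sum_k \epsilon_k$. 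Expanding by the multinomial theorem and using that $\epsilon_k^{\,j} = \epsilon_k$ for $\epsilon_k \in \{0,1\}$ and $j \ge 1$, a short inclusion–exclusion collapses this to the sum over compositions in which every variable occurs,
\[
\sum_{\substack{j_1 + \cdots + j_s = n \\ j_1, \ldots, j_s \ge 1}} \binom{n}{j_1, \ldots, j_s} \prod_{k=1}^s c_k^{j_k},
\]
so it suffices to make this polynomial in $c_1, \ldots, c_s$ nonzero as a function on $F^s$.

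To finish, I would produce one nonvanishing coefficient and then argue the polynomial does not vanish identically. Taking the composition that uses $a_i$ parts equal to $p^i$ for each $i$ (there are $\sum_i a_i = s$ parts, each at least $1$, summing to $n$), adding these parts in base $p$ produces no carries since every $a_i < p$, so by the multinomial form of Lucas' theorem the corresponding coefficient equals $\prod_i a_i! \bmod p$, a unit. Distinct compositions yield distinct exponent vectors, hence distinct monomials, so there is no cancellation and the displayed polynomial is genuinely nonzero. Because $n < |F|$, every exponent $j_k \le n$ is at most $|F| - 1$, so the polynomial is reduced; a nonzero reduced polynomial represents a nonzero function on the finite field $F$, so some choice of $c_1, \ldots, c_s \in F$ makes the difference nonzero. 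Together with~\eqref{it:w1} this gives $\fundeg(f_n) = s_p(n)$.

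The step I expect to be the main obstacle is precisely this last one: a nonzero multinomial coefficient only guarantees a nonzero \emph{polynomial}, whereas the lower bound requires a nonzero \emph{function} on $F$. This is exactly where the hypothesis $n < |F|$ is indispensable, keeping all exponents below $|F|$ so that the polynomial is reduced and cannot vanish identically; without it the power function would collapse modulo $x^{|F|} - x$ and the equality would fail.
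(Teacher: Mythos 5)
Your proof is correct. Part~\eqref{it:w1} is essentially the paper's argument: base-$p$ expansion, the Frobenius maps $x \mapsto x^{p^i}$ as additive endomorphisms handled by Lemma~\ref{lem:dprop}\eqref{it:dphom} and Theorem~\ref{thm:comp}, and Lemma~\ref{lem:mprop} for the product. For part~\eqref{it:w2}, however, you take a genuinely different route. The paper's lower bound is a squeeze: writing $k := (q-1)-n$ so that the base-$p$ digits of $k$ and $n$ are complementary, it uses $f_{q-1} = f_k \cdot f_n$, the known exact degree $\alpha(p-1)$ of the characteristic function $1 - x^{q-1}$ (Lemma~\ref{lem:dirac}), Lemma~\ref{lem:aprop}\eqref{it:d2}, and subadditivity from Lemma~\ref{lem:mprop} to force every inequality in $\alpha(p-1) \le \fundeg(f_k) + \fundeg(f_n) \le \alpha(p-1)$ to be an equality. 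You instead compute the $s$-fold difference $\bigl(\prod_{k=1}^{s}(\tau_{c_k}-1)\bigr) * f_n$ at $0$ directly, collapse it by inclusion--exclusion to the sum over compositions of $n$ into $s$ positive parts, and exhibit a coefficient that is a unit mod $p$ via the carry-free composition into parts $p^i$ and the multinomial Lucas theorem; your inclusion--exclusion identity and the Lucas computation both check out, and the observation that all exponents stay below $|F|$ (so the nonzero reduced polynomial is a nonzero function) is exactly the right use of the hypothesis $n < |F|$. The trade-off: the paper's squeeze is shorter and needs no combinatorics but depends on Lemma~\ref{lem:dirac} and the precise value $\delta(F,\Z_p) = \alpha(p-1)$; your argument is self-contained apart from Lucas' theorem and produces an explicit witness in $\Aug(\Z[F])^{s}$ annihilating nothing, which is arguably more informative.
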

  \emph{Proof:}
  \eqref{it:w1}
  We let $n \in \N_0$, and       
  let $t \in \N$ and  $\gamma_0, \ldots, \gamma_{t-1} \in \{0, \ldots, p-1 \}$ be
  such that $n = \sum_{i=0}^{t - 1} \gamma_i \, p^i$.
  Let $i \in \{0,\ldots, t - 1\}$.
  The functional degree of $f_1(x) = x$ is equal to $1$. Hence
  by Lemma~\ref{lem:mprop}, the functional degree of
  $f_{\gamma_i} (x) = x^{\gamma_i}$ is at most $\gamma_i$. Since
  $h_i : F \to F$, $h_i(x) := x^{p^i}$ is a group homomorphism,
  Lemma~\ref{lem:dprop}\eqref{it:dphom}
  and Theorem~\ref{thm:comp} imply $\fundeg (f_{\gamma_i \, p^i})
  = \fundeg (h_i \circ f_{\gamma_i}) \le
  \fundeg (f_{\gamma_i}) \le \gamma_i$.
  Since $f_n = \prod_{i=0}^{t-1} h_i \circ f_{\gamma_i}$, 
  Lemma~\ref{lem:mprop}  implies
  $\fundeg (f_n) \le \sum_{i = 0}^{t-1} \gamma_i = s_p (n)$.
  This completes the proof of~\eqref{it:w1}.

  \eqref{it:w2}: Let $q := |F|$, and let $\alpha \in \N$ be such
  that $p^{\alpha} = q$. We assume that $n \le q-1$ and
  $n = \sum_{i=0}^{\alpha-1} \gamma_i \, p^i$. We set
   $k := \sum_{i=0}^{\alpha - 1} (p-1-\gamma_i)\, p^i$.
  Let $g (x) := 1 - x^{q-1}$. Since the additive group of $F$ is
  isomorphic to $\Z_p^{\alpha}$, the function $g$ can also be seen
  as a function from $\Z_p^{\alpha}$ to $\Z_p$, and it satisfies
  the assumptions on $f$ in Lemma~\ref{lem:dirac}. By this Lemma,
  we have
  $\fundeg (g) = \alpha (p-1)$.
  Now Lemma~\ref{lem:aprop}\eqref{it:d2} yields
  $\fundeg (g) = \fundeg (f_{q-1}) =
   \fundeg ( f_k \cdot  f_n)$.
   By Lemma~\ref{lem:mprop} and item~\eqref{it:w1},
   we have
   $\fundeg (f_k \cdot f_n) \le
   \fundeg (f_k) + \fundeg (f_n) \le
   \sum_{i=0}^{\alpha-1} (p-1-\gamma_i) + \fundeg (f_n)
   \le
   \sum_{i=0}^{\alpha-1} (p-1-\gamma_i) + \sum_{i=0}^{\alpha-1} \gamma_i
   =
    \alpha (p-1)$.
   Hence all inequalities in this chain are equalities, 
   and so $\fundeg (f_n) = \sum_{i = 0}^{\alpha-1} \gamma_i =
   s_p (n)$.
  \qed

 \begin{lem} \label{lem:mondeg}
    Let $p$ be a prime number, let $\alpha, n \in \N$, let
    $F$ be a field of chacteristic $p$, and let
    $e_1,\ldots,e_n \in \N_0$.
    Let $f (x_1,\ldots,x_n) := x_1^{e_1} \cdots x_n^{e_n}$
    for $\vb{x} \in F^n$.
    Then \begin{enumerate}
    \item $\fundeg (f) \le  \sum_{i=1}^n s_p (e_i)$.
    \item \label{it:pd2} If $|F|$ is finite and $e_1, \ldots, e_n \in \{0, \ldots, |F|-1\}$, then
      $\fundeg (f)  =  \sum_{i=1}^n s_p (e_i)$.
    \end{enumerate}  
  \end{lem}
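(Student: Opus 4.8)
The plan is to prove Lemma~\ref{lem:mondeg} by viewing the monomial $f(x_1,\ldots,x_n) = x_1^{e_1}\cdots x_n^{e_n}$ as a product (in the sense of the $\otimes$ operation from Lemma~\ref{lem:prodprop}) of the single-variable power functions $f_{e_i}(x) = x^{e_i}$ that were analyzed in Lemma~\ref{lem:xn}. This reduces the two-variable structure entirely to facts already established.

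For part~(1), the upper bound, the cleanest route is to write $f = g_1 \cdots g_n$ where $g_i(x_1,\ldots,x_n) := x_i^{e_i}$ depends only on the $i$-th coordinate. Each $g_i$ factors as $f_{e_i} \circ p_i$, where $p_i : F^n \to F$ is the projection onto the $i$-th coordinate. Since $p_i$ is a group homomorphism, Lemma~\ref{lem:dprop}\eqref{it:dphom} gives $\fundeg(p_i) \le 1$, and then Theorem~\ref{thm:comp} together with Lemma~\ref{lem:xn}\eqref{it:w1} yields $\fundeg(g_i) \le \fundeg(f_{e_i}) \le s_p(e_i)$. Applying the multiplicative bound of Lemma~\ref{lem:mprop} to the product $g_1\cdots g_n$ gives $\fundeg(f) \le \sum_{i=1}^n \fundeg(g_i) \le \sum_{i=1}^n s_p(e_i)$.

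For part~(2), the matching lower bound when $|F|$ is finite and each $e_i \in \{0,\ldots,|F|-1\}$, the idea is to recognize $f = f_{e_1} \otimes \cdots \otimes f_{e_n}$ as a tensor product of nonzero functions $f_{e_i} : F \to F$, regarding the domain $F^n$ as a product of the $n$ copies of the additive group $F$. Since $F$ is a field and each $f_{e_i}$ is not the zero function (the constant $1$ when $e_i = 0$, and nonzero otherwise), the equality case of Lemma~\ref{lem:prodprop}, applied inductively across the $n$ factors, gives $\fundeg(f) = \sum_{i=1}^n \fundeg(f_{e_i})$. Finally, the hypothesis $e_i \le |F| - 1$ lets me invoke Lemma~\ref{lem:xn}\eqref{it:w2} to identify each summand as $\fundeg(f_{e_i}) = s_p(e_i)$, so $\fundeg(f) = \sum_{i=1}^n s_p(e_i)$.

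\emph{The main obstacle} will be the bookkeeping needed to apply Lemma~\ref{lem:prodprop} to $n$ factors rather than the two for which it is stated: I must set up a clean induction (or iterate the two-variable statement) and keep track of the fact that a product of the scalar functions $f_{e_i}$ stays nonzero, so that the hypothesis ``none of the functions is the zero function'' is satisfied at each inductive step. This is where the requirement that $F$ be a \emph{field} is essential—the product of nonzero $F$-valued functions is again nonzero precisely because $F$ is an integral domain—so I should flag that the identity $\fundeg(f) = \sum_i s_p(e_i)$ in part~(2) genuinely uses field-ness, in contrast to the inequality in part~(1).
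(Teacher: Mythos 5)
Your proposal is correct and follows essentially the same route as the paper, whose proof is the one-line remark that the claim follows from Lemma~\ref{lem:prodprop} and Lemma~\ref{lem:xn}; you simply spell out the details, writing $f = f_{e_1}\otimes\cdots\otimes f_{e_n}$, iterating the equality of Lemma~\ref{lem:prodprop} (each factor being nonzero on a field), and quoting Lemma~\ref{lem:xn} for the single-variable degrees. Your use of Lemma~\ref{lem:mprop} for the upper bound in part~(1) instead of the $\le$ direction of Lemma~\ref{lem:prodprop} is an immaterial variation.
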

  \emph{Proof:}
  The claim follows from  Lemma~\ref{lem:prodprop} and
  Lemma~\ref{lem:xn}.
    \qed
  
    \begin{thm} \label{thm:fp}
     Let $p$ be a prime number,  let
     $F$ be a finite field with $q= p^{\alpha}$ elements, let $n \in \N$,
    let $f = \sum_{\vb{e} \in E} c_\vb{e} x_1^{e_1} \dots x_n^{e_n}
      \in F[x_1, \ldots, x_n]$, where $E$ is a finite subset of $\N_0^n$ and
      $(c_\vb{e})_{\vb{e} \in E}$ is a family from $F \setminus \{0\}$,
    and let $\ol{f}$ be the
    function from $F^n$ to $F$ induced by $f$. Then
    $\fundeg (\ol{f})  \le  \pdeg (f)$.
    If all exponents occurring in $f$ are at most $q-1$, i.e,
    if $E \subseteq \{0,1,\ldots, q-1\}^n$,
        then $\fundeg (\ol{f}) = \pdeg (f)$.
  \end{thm}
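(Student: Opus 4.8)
The plan is to prove the two assertions separately: the inequality $\fundeg (\ol f) \le \pdeg (f)$ by the same mechanism as Lemma~\ref{lem:syntdeg}, and the equality by identifying the functional-degree filtration of $F^{F^n}$ with the power filtration of the augmentation ideal inside the group algebra $F[F^n]$. For the inequality, write $x^{\vb e} := x_1^{e_1}\cdots x_n^{e_n}$. By the first item of Lemma~\ref{lem:mondeg}, the monomial function $\ol{x^{\vb e}}$ has functional degree at most $\sum_{i=1}^n s_p(e_i)$. Multiplication by the field element $c_{\vb e}$ is a group endomorphism of $\algop{F}{+}$, so by Lemma~\ref{lem:dprop}\eqref{it:dphom} and Theorem~\ref{thm:comp} it does not raise the functional degree, giving $\fundeg (c_{\vb e}\,\ol{x^{\vb e}}) \le \sum_i s_p(e_i)$. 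Summing over $\vb e \in E$ with Lemma~\ref{lem:aprop}\eqref{it:d1} yields $\fundeg (\ol f) \le \max_{\vb e \in E}\sum_i s_p(e_i) = \pdeg (f)$, with no restriction on the exponents needed.

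For the equality I assume $E \subseteq \{0,\dots,q-1\}^n$ and study the subspaces $S_d := \{ g \in F^{F^n} \mid \fundeg (g) \le d\}$. Each $S_d$ is an $F$-subspace, by Lemma~\ref{lem:aprop}\eqref{it:d1} together with the fact (used above) that scaling by a field element is composition with a degree-$\le 1$ homomorphism. The $q^n$ reduced monomial functions $\ol{x^{\vb e}}$ with $\vb e \in \{0,\dots,q-1\}^n$ form an $F$-basis of $F^{F^n}$, since the evaluation map $F[x_1,\dots,x_n]/(x_1^q-x_1,\dots,x_n^q-x_n) \to F^{F^n}$ is an isomorphism. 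Each such monomial of $p$-weight degree $\le d$ lies in $S_d$ by the inequality just proved, so $\dim_F S_d \ge T_d$, where $T_d$ denotes the number of reduced monomials of $p$-weight degree $\le d$. The crux is the matching upper bound $\dim_F S_d \le T_d$.

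To obtain it, consider the map $\Psi : F[F^n] \to F^{F^n}$, $r \mapsto r * \chi_0$, where $\chi_0$ is the characteristic function of $0$. This is a homomorphism of $F[F^n]$-modules; it is surjective by the expansion~\eqref{eq:g1}, and both spaces have $F$-dimension $q^n$, so $\Psi$ is an isomorphism. Since the action on $F$-valued functions factors through $F[F^n]$, a function $g = \Psi(r)$ satisfies $\fundeg (g) \le d$ if and only if $\mathfrak a^{d+1} r = 0$, where $\mathfrak a := \Aug(F[F^n])$; hence $S_d \cong \Ann_{F[F^n]}(\mathfrak a^{d+1})$. Writing $m := n\alpha$ and fixing a $\Z_p$-basis $v_1,\dots,v_m$ of the additive group $F^n$, the elements $u_k := \tau_{v_k}-1$ satisfy $u_k^p = 0$ and present $F[F^n] \cong F[u_1,\dots,u_m]/(u_1^p,\dots,u_m^p)$ as a graded truncated polynomial algebra with $\mathfrak a = (u_1,\dots,u_m)$; here $\mathfrak a^{j}$ is spanned by the monomials $u^{\vb a} := u_1^{a_1}\cdots u_m^{a_m}$ with $\vb a \in \{0,\dots,p-1\}^m$ and $|\vb a| := \sum_k a_k \ge j$. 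A direct monomial computation (a nonzero $c_{\vb a}$ is admissible precisely when no $\vb b$ with $|\vb b|\ge d+1$ satisfies $a_k+b_k \le p-1$ for all $k$, i.e.\ when $|\vb a| \ge m(p-1)-d$) gives $\Ann(\mathfrak a^{d+1}) = \mathfrak a^{\,m(p-1)-d}$. Its dimension is $\#\{\vb a : |\vb a| \ge m(p-1)-d\}$, which the complementation $\vb a \mapsto (p-1,\dots,p-1)-\vb a$ turns into $\#\{\vb a : |\vb a| \le d\}$; grouping the $m$ coordinates into $n$ blocks of $\alpha$ base-$p$ digits identifies this with $T_d$. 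Thus $\dim_F S_d = T_d$.

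Consequently the reduced monomials of $p$-weight degree $\le d$ form a basis of $S_d$, so a function lies in $S_d$ exactly when every monomial of its reduced form has $p$-weight degree $\le d$. For reduced $f$ with $\pdeg (f) = d$ this places $\ol f$ in $S_d \setminus S_{d-1}$, whence $\fundeg (\ol f) = d$. I expect the annihilator-and-dimension computation in the truncated group algebra to be the main obstacle; the remaining steps are an assembly of the module isomorphism $\Psi$ with the earlier lemmas.
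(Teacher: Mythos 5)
Your proof is correct, and while the inequality $\fundeg(\ol f)\le\pdeg(f)$ is handled exactly as in the paper (Lemmas~\ref{lem:mondeg}, \ref{lem:mprop}, \ref{lem:aprop}), your argument for the equality takes a genuinely different route. The paper argues locally: it picks a monomial $c_{\vb a}x^{\vb a}$ of maximal $p$-weight degree, multiplies $f$ by $x_1^{q-1-a_1}\cdots x_n^{q-1-a_n}$, checks that after reduction only this monomial contributes to $x_1^{q-1}\cdots x_n^{q-1}$, and then combines Lemma~\ref{lem:mondeg}\eqref{it:pd2}, Lemma~\ref{lem:aprop}\eqref{it:d2}, Lemma~\ref{lem:mprop} and the identity $s_p(q-1-z)=\alpha(p-1)-s_p(z)$ to squeeze out the lower bound $\fundeg(\ol f)\ge\pdeg(f)$. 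You instead prove a global structural statement: identifying $F^{F^n}$ with $F[F^n]$ via $r\mapsto r*\chi_0$, presenting $F[F^n]$ as the truncated algebra $F[u_1,\dots,u_m]/(u_1^p,\dots,u_m^p)$, and computing $\Ann(\mathfrak a^{d+1})=\mathfrak a^{m(p-1)-d}$, you show that $\dim_F S_d$ equals the number of reduced monomials of $p$-weight degree at most $d$, so that these monomials form a basis of $S_d$. I checked the annihilator computation and the complementation/dimension count; they are correct, and the passage from $I=\AugZA$ to $\mathfrak a=\Aug(F[F^n])$ is legitimate because the action on $F$-valued functions factors through $\Z_p[F^n]$ and extends $F$-linearly. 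Your approach is less elementary (it essentially re-derives the nilpotency-degree facts that the paper imports from \cite{Ka:TJRO} via Lemmas~\ref{lem:deltanu} and~\ref{lem:dirac}), but it buys a strictly stronger conclusion: the functional degree of \emph{every} function $F^n\to F$ equals the $p$-weight degree of its unique reduced polynomial representative, with the monomial basis of each filtration step $S_d$ exhibited explicitly; the paper's multiplication trick only certifies the one polynomial at hand.
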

  \begin{proof}
   If $E = \emptyset$, then $f$ is the $0$-polynomial, and
  we have $\pdeg (f) = 0$ and $\fundeg (\ol{f}) = 0$.
   
   Let us now assume $f \neq 0$.
   By the definition of the $p$-weight degree, we have
   $\pdeg (f) = \max \, \{ \sum_{i = 1}^n s_p (e_i) \mid (e_1, \ldots, e_n) \in E \}$. 
   For each $\vb{e} \in E$, Lemma~\ref{lem:mondeg} and
   Lemma~\ref{lem:mprop} yield that
  the functional degree of the function $\ol{m}$ induced by %
  $m = c_{\vb{e}} x_1^{e_1} \cdots x_n^{e_n}$
  satisfies  $\fundeg (\ol{m}) \le \sum_{i=1}^n s_p (e_i)$.
  Lemma~\ref{lem:aprop}\eqref{it:d1} now implies $\fundeg (\ol{f}) \le \pdeg (f)$.

  For proving the claimed equality, we may assume $f \neq 0$.
  Let $c_{\vb{a}} x_1^{a_1} \dots x_n^{a_n}$ be a monomial
  of maximal $p$-weight degree in $f$.
  Let $g$ be the remainder of $x_1^{q-1-a_1} \cdots x_n^{q-1-a_n} \cdot f$
  modulo $x_1^q - x_1, \ldots, x_n^q - x_n$. 
  We claim that $g$ contains the monomial
  $c_{\vb{a}} x_1^{q-1} \cdots x_n^{q-1}$.
  To this end, let $c_{\vb{b}} x_1^{b_1} \cdots x_n^{b_n}$ be a
  monomial in $f$ such that
  the remainder of $(x_1^{q-1-a_1} \cdots x_n^{q-1-a_n})
   \cdot (x_1^{b_1} \cdots x_n^{b_n})$ modulo
  $x_1^q - x_1, \ldots, x_n^q - x_n$ is $x_1^{q-1} \cdots x_n^{q-1}$.
   Then for each $i \in \ul{n}$, $q-1 - a_i + b_i$ is
   either $q-1$ or $2 (q-1)$, and therefore $b_i = a_i$
   or ($a_i = 0$ and $b_i = q-1$).
   Since $\sum_{i = 1}^n s_p (b_i) \le \sum_{i=1} s_p (a_i)$,
   the alternative $a_i = 0$ and $b_i = q-1$ may never occur.
   Therefore, $\vb{a} = \vb{b}$, and hence
   only the monomial $c_{\vb{a}} x_1^{a_1} \cdots x_n^{a_n}$ from $f$ 
   contributes to the monomial $x_1^{q-1} \cdots x_n^{q-1}$
   in $g$.
   Now by Lemma~\ref{lem:mondeg}\eqref{it:pd2}, this monomial 
   induces a function of functional degree $n \alpha (p-1)$,
   and all other monomials in $g$ induce a function of functional
   degree at most $n \alpha (p-1) - 1$. Lemma~\ref{lem:aprop}\eqref{it:d2}
   implies $\fundeg (\ol{g}) =  n \alpha (p-1)$.
   Thus $n \alpha (p-1) = \fundeg (\ol{g})
          \le \fundeg (x_1^{q-1-a_1} \cdots x_n^{q-1-a_n}) +
          \fundeg (\ol{f}) =
          \sum_{i=1}^n s_p (q-1-a_i) + \fundeg (\ol{f})$.
          Since $s_p ((q-1) - z) = \alpha (p-1) - s_p (z)$ for all
          $z \in \{0, \ldots, q-1\}$, we have
          $\sum_{i=1}^n s_p (q-1-a_i) + \fundeg (\ol{f}) =
          n \alpha (p-1) - \sum_{i = 1}^n s_p (a_i)
          + \fundeg (\ol{f})$. From this chain of inequalities,
          we obtain
  $\fundeg (\ol{f}) \ge \sum_{i = 1}^n s_p (a_i) =
                   \pdeg (f)$.
  \end{proof}

  Now we can 
  derive a special case of Moreno and Moreno's improvement
  \cite[Theorem~1]{MM:IOTC} of
  Warning's First Theorem.
  \begin{thm}[cf. \cite{MM:IOTC}] \label{thm:warnA}
   Let $p$ be a prime, let $F$ be a finite field of characteristic $p$, let $r, N \in \N$, and let
   $f_1, \ldots, f_r \in F[x_1, \ldots, x_N]$.
   We assume that $N > \sum_{j = 1}^r \deg_p (f_j)$.
   Then $p$ divides $|V(f_1,\ldots, f_r)|$.
  \end{thm}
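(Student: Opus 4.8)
The plan is to reduce this field-theoretic statement directly to the group-theoretic form of Warning's First Theorem, Theorem~\ref{thm:warngroupA}, by taking the ambient group $A$ to be the additive group $\algop{F}{+}$ of the field. Since $F$ is a finite field of characteristic $p$, every element has additive order dividing $p$, so $\algop{F}{+}$ is a finite (elementary) abelian $p$-group, isomorphic to $\Z_p^{\alpha}$ where $q = p^{\alpha} = |F|$; moreover $|F| = q \ge p > 1$. Thus the group hypotheses of Theorem~\ref{thm:warngroupA} are met with $B = A = F$.

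First I would pass from the polynomials $f_j \in F[x_1,\ldots,x_N]$ to the functions $\ol{f_j} : F^N \to F$ that they induce, which we may regard as functions $A^N \to A$. The key input is the upper bound of Theorem~\ref{thm:fp}, giving $\fundeg(\ol{f_j}) \le \pdeg(f_j)$ for each $j$, where $\pdeg$ is exactly the $p$-weight degree $\deg_p$ appearing in the hypothesis. Summing and combining with the assumption $N > \sum_{j=1}^r \deg_p(f_j)$ yields
\[
  N > \sum_{j=1}^r \deg_p(f_j) \ge \sum_{j=1}^r \fundeg(\ol{f_j}),
\]
so the tuple $\ol{f_1},\ldots,\ol{f_r}$ satisfies the degree inequality required by Theorem~\ref{thm:warngroupA}.

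Next I would note that the common zero set is unaffected by the passage from polynomials to induced functions: since $f_j(\vb{a}) = \ol{f_j}(\vb{a})$ for every $\vb{a} \in F^N$, the set $V(f_1,\ldots,f_r)$ of simultaneous zeros of the polynomials coincides with $V(\ol{f_1},\ldots,\ol{f_r})$. Applying Theorem~\ref{thm:warngroupA} to $\ol{f_1},\ldots,\ol{f_r}$ then shows that $p$ divides $|V(\ol{f_1},\ldots,\ol{f_r})| = |V(f_1,\ldots,f_r)|$, which is the assertion.

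The argument is essentially a translation between the polynomial and functional settings, so I do not expect a genuine obstacle: the two substantive facts, namely the comparison $\fundeg(\ol{f_j}) \le \deg_p(f_j)$ and the group version of Warning's theorem, are both already available as Theorems~\ref{thm:fp} and~\ref{thm:warngroupA}. The only point deserving care is to invoke only the \emph{inequality} half of Theorem~\ref{thm:fp}; no reduction of exponents modulo $x_i^q - x_i$ (as needed for the equality case there) is required, since the upper bound on the functional degree alone suffices to feed into Theorem~\ref{thm:warngroupA}.
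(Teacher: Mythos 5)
Your proposal is correct and follows exactly the paper's own argument: identify $\algop{F}{+}$ as a finite abelian $p$-group, use the inequality half of Theorem~\ref{thm:fp} to get $N > \sum_{j=1}^r \fundeg(\ol{f_j})$, and conclude via Theorem~\ref{thm:warngroupA}. The extra remarks on the zero sets coinciding and on not needing the equality case of Theorem~\ref{thm:fp} are accurate but routine.
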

  \begin{proof}
    The additive group of $F$ is a finite abelian $p$-group.
    From the assumption and Theorem~\ref{thm:fp}, we obtain
    $N > \sum_{j=1}^r \fundeg (\ol{f_j})$. Now Theorem~\ref{thm:warngroupA}
    yields that $p$ divides $|V(f_1,\ldots, f_r)|$.
  \end{proof}  

  \section{Chevalley Warning Theorems with restricted domain and range} \label{sec:chevrest}
  We start from a variant of Chevalley's Theorem that was
  formulated and proved
  in \cite{Br:CTWR} in the following form:
    \begin{thm}[{\cite[Theorem~1]{Br:CTWR}}] \label{thm:rest0}
      Let $F$ be a finite field with $q$ elements, let
      $f_1, \ldots, f_r \in F[x_1, \ldots, x_N]$, and let
      $A_1, \ldots, A_N$ be non-empty subsets of $F$ with
      \[
      \sum_{i=1}^N (|A_i|- 1) > (q-1) \sum_{j=1}^r \deg (f_j).
      \]
      Then the set $\{ \vb{a} \in \prod_{i=1}^N A_i \mid
      f_1 (\vb{a}) = \cdots =  f_r (\vb{a}) = 0 \}$
      is not a singleton. 
    \end{thm}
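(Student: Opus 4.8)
The plan is to argue by contradiction with the polynomial method, reproducing in essence Brink's original argument. Suppose that the set
\[
S := \{ \vb{a} \in \prod_{i=1}^N A_i \mid f_1 (\vb{a}) = \cdots = f_r (\vb{a}) = 0 \}
\]
is a singleton, say $S = \{\vb{c}\}$ with $\vb{c} = (c_1, \ldots, c_N)$. I would encode ``being a common zero'' and ``being equal to $\vb{c}$'' by two polynomials and then compare their total degrees. For the first, set
\[
P (\vb{x}) := \prod_{j=1}^r \bigl( 1 - f_j (\vb{x})^{q-1} \bigr).
\]
Since $a^{q-1} = 1$ for $a \in F \setminus \{0\}$ and $0^{q-1} = 0$, for every $\vb{a} \in F^N$ the value $P(\vb{a})$ is $1$ if $\vb{a}$ is a common zero of the $f_j$ and $0$ otherwise, and $\deg (P) \le (q-1) \sum_{j=1}^r \deg (f_j)$. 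For the second, using $c_i \in A_i$, let $\ell_i (x_i) := \prod_{a \in A_i \setminus \{c_i\}} (x_i - a)(c_i - a)^{-1}$, a univariate polynomial of degree $|A_i| - 1$, and put $L (\vb{x}) := \prod_{i=1}^N \ell_i (x_i)$. For $\vb{a} \in \prod_{i=1}^N A_i$ one has $L(\vb{a}) = 1$ if $\vb{a} = \vb{c}$ and $L(\vb{a}) = 0$ otherwise.

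Because $\vb{c}$ is by assumption the unique common zero inside the grid $\prod_{i=1}^N A_i$, the functions $P$ and $L$ agree at every point of that grid. I would then pass to reduced representatives: let $g_i (x_i) := \prod_{a \in A_i}(x_i - a)$, a monic polynomial of degree $|A_i|$ vanishing exactly on $A_i$, and reduce every polynomial modulo the ideal $(g_1, \ldots, g_N)$ so that its degree in each $x_i$ is at most $|A_i| - 1$. A nonzero reduced polynomial cannot vanish on all of $\prod_{i=1}^N A_i$ (by a one-variable-at-a-time argument, since a univariate polynomial of degree $< |A_i|$ cannot vanish on $A_i$), so evaluation is a bijection from the space of reduced polynomials onto the functions $\prod_{i=1}^N A_i \to F$. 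Consequently $P$ and $L$ have the same reduced form $\widetilde{P}$.

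The contradiction comes from a total-degree count. Reducing modulo $g_i$ replaces $x_i^{|A_i|}$ by a polynomial of strictly smaller degree in $x_i$, hence never raises the total degree; therefore $\deg (\widetilde{P}) \le \deg (P) \le (q-1) \sum_{j=1}^r \deg (f_j)$. On the other hand $L$ is already reduced, since its degree in each $x_i$ is $|A_i| - 1$, and its top monomial $\prod_{i=1}^N x_i^{|A_i| - 1}$ occurs with the nonzero coefficient $\prod_{i=1}^N \prod_{a \in A_i \setminus \{c_i\}} (c_i - a)^{-1}$ and has total degree $\sum_{i=1}^N (|A_i| - 1)$. The hypothesis $\sum_{i=1}^N (|A_i| - 1) > (q-1) \sum_{j=1}^r \deg (f_j)$ forces this top monomial to be absent from $\widetilde{P}$, so $\widetilde{P} \neq L$, contradicting the equality of reduced forms.

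The one step that needs care, and which I regard as the crux, is the passage from ``$P$ and $L$ agree on the grid'' to ``$P$ and $L$ have the same reduced form'': this rests on the bijectivity of evaluation on reduced polynomials together with the observation that reduction modulo the $g_i$ cannot increase the total degree. Everything else is routine bookkeeping with total degrees. I note that this argument stays entirely within the language of total degree and does not invoke the functional degree machinery of the earlier sections; the reason is that the $A_i$ are arbitrary subsets of $F$, not subgroups, so the group-theoretic tools of Sections~\ref{sec:fundeg}--\ref{sec:bounds} do not apply to functions on $\prod_{i=1}^N A_i$. The functional degree enters only in the companion results, such as Theorem~\ref{thm:rest1}, where the grid is replaced by a subgroup of $F^N$.
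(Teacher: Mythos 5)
Your argument is correct and complete: it is in essence Brink's original polynomial-method proof, and the one step you flag as the crux --- that two polynomials agreeing on the grid $\prod_{i=1}^N A_i$ have the same multidegree-reduced representative, combined with the observation that reduction modulo the $g_i$ cannot raise the total degree --- is exactly the right justification, with the nonvanishing of the top coefficient of $L$ supplying the contradiction. The paper, however, does not prove Theorem~\ref{thm:rest0} at all: it quotes the result from \cite{Br:CTWR} as a point of departure. What the paper proves instead are Theorems~\ref{thm:rest1} and~\ref{thm:rest2}, which restrict the domain to a subgroup of $F^N$ but strengthen the conclusion from ``not a singleton'' to ``the number of solutions is divisible by $p$'' and replace $\deg$ by the smaller $\pdeg$; it then derives the subgroup case of Theorem~\ref{thm:rest0} from Theorem~\ref{thm:rest1} via a convexity estimate comparing $(p^{\alpha_i}-1)/(p^{\alpha}-1)$ with $\alpha_i/\alpha$. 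Your closing remark is exactly on target: the functional-degree machinery needs the domain to be a group, so it cannot reach arbitrary subsets $A_i$, and your direct total-degree argument is the appropriate (indeed the only available) route to the statement as literally given. The two approaches are complementary --- yours yields the general grid statement with the weaker conclusion, the paper's yields the subgroup case with divisibility by $p$.
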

    
    In the case that all subsets $A_i$ are subgroups of the additive
    group of $F$, we can sometimes improve this result:
    \begin{thm} \label{thm:rest1}
       Let $p$ be a prime, $\alpha \in \N$, and 
       let $F$ be a finite field with $q = p^{\alpha}$ elements.
       Let $f_1, \ldots, f_r \in F[x_1, \ldots, x_N]$, let
       $A_1, \ldots, A_N$ be subgroups of $\algop{F}{+}$ with
       $|A_i| = p^{\alpha_i}$ for $i \in \{1, \ldots, N\}$.
       We assume that
      \[
      \sum_{i=1}^N \alpha_i > \alpha \sum_{j=1}^r \pdeg (f_j).
      \]
       Then $p$ divides the cardinality of $\{ \vb{a} \in \prod_{i=1}^N A_i \mid
      f_1 (\vb{a}) = \cdots =  f_r (\vb{a}) = 0 \}$.
    \end{thm}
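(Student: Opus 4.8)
The plan is to run the argument of Theorem~\ref{thm:warngroup}, but with the domain cut down to the subgroup $A := \prod_{i=1}^N A_i$ of $F^N$ and with the $p$-weight degree playing the role of the functional degree through Theorem~\ref{thm:fp}. First I would record the structural fact that, since $F$ has characteristic $p$, the additive group $\algop{F}{+}$ is elementary abelian, so each subgroup satisfies $A_i \isomorphic \Z_p^{\alpha_i}$ and hence $A \isomorphic \Z_p^{\sum_{i=1}^N \alpha_i}$ is again a finite abelian $p$-group, sitting inside $F^N$ as a subgroup. Writing $V := \{\vb{a} \in A \mid \ol{f_1}(\vb{a}) = \cdots = \ol{f_r}(\vb{a}) = 0\}$ and letting $\chi_0^{F^r} : F^r \to \Z_p$ be the characteristic function of $0$, I would define $g : A \to \Z_p$ by $g(\vb{a}) := \chi_0^{F^r}(\ol{f_1}(\vb{a}), \ldots, \ol{f_r}(\vb{a})) = \prod_{j=1}^r (\chi_0^F \circ \ol{f_j}|_A)(\vb{a})$, where $\chi_0^F : F \to \Z_p$ is the characteristic function of $0$ in $F$. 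Since $g(\vb{a}) = 1 \in \Z_p$ exactly when $\vb{a} \in V$, the sum $\sum_{\vb{a} \in A} g(\vb{a})$ is the residue of $|V|$ modulo $p$, so it suffices to show this sum vanishes.

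Next I would bound $\fundeg(g)$. By Theorem~\ref{thm:fp} we have $\fundeg(\ol{f_j}) \le \pdeg(f_j)$, and by Lemma~\ref{lem:rest} the restriction satisfies $\fundeg(\ol{f_j}|_A) \le \fundeg(\ol{f_j}) \le \pdeg(f_j)$. As $\chi_0^F$ maps $F \isomorphic \Z_p^{\alpha}$ into $\Z_p$, Lemma~\ref{lem:pgroupupperbound} gives $\fundeg(\chi_0^F) \le \alpha(p-1)$, so Theorem~\ref{thm:comp} yields $\fundeg(\chi_0^F \circ \ol{f_j}|_A) \le \alpha(p-1)\,\pdeg(f_j)$. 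Multiplying the $r$ factors with Lemma~\ref{lem:mprop}, I obtain
\[
\fundeg(g) \le \sum_{j=1}^r \fundeg(\chi_0^F \circ \ol{f_j}|_A) \le \alpha(p-1) \sum_{j=1}^r \pdeg(f_j).
\]

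Finally, let $\chi_0^{A,1} : A \to \Z_p$ be the characteristic function of $0$ in $A$ and set $M := \fundeg(\chi_0^{A,1})$. Since $A \isomorphic \Z_p^{\sum_i \alpha_i}$, Lemma~\ref{lem:dirac} gives $M = (\sum_{i=1}^N \alpha_i)(p-1)$. The hypothesis $\sum_{i=1}^N \alpha_i > \alpha \sum_{j=1}^r \pdeg(f_j)$, multiplied by $(p-1) \ge 1$, shows $M > \alpha(p-1)\sum_{j=1}^r \pdeg(f_j) \ge \fundeg(g)$, so $\fundeg(g) < M$. Lemma~\ref{lem:sum0} then forces $\sum_{\vb{a} \in A} g(\vb{a}) = 0$ in $\Z_p$, i.e. $p \mid |V|$.

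Each step is routine given the machinery already developed; the one point that needs care is the bookkeeping of the factor $(p-1)$, which appears both in the maximal degree $M = (\sum_i \alpha_i)(p-1)$ of the domain characteristic function and in the upper bound $\alpha(p-1)\sum_j \pdeg(f_j)$ for $\fundeg(g)$. The theorem comes out precisely because this common factor cancels, converting the degree inequality $\fundeg(g) < M$ into the stated hypothesis $\sum_i \alpha_i > \alpha \sum_j \pdeg(f_j)$. I would also be careful to justify the passage from $F^N$ to the subgroup $A$ via Lemma~\ref{lem:rest}, and to note that it is the characteristic-$p$ assumption that makes $A$ a $p$-group on which $\chi_0^{A,1}$ attains the maximal degree computed by Lemma~\ref{lem:dirac}.
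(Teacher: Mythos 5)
Your proof is correct and takes essentially the same route as the paper: the paper obtains Theorem~\ref{thm:rest1} by applying Theorem~\ref{thm:warngroup} (through the slightly more general Theorem~\ref{thm:rest2}, stated for an arbitrary subgroup of $F^N$ of order $p^M$), and your argument simply inlines the proof of Theorem~\ref{thm:warngroup} on the subgroup $A$, using the same chain of ingredients --- Theorem~\ref{thm:fp} and Lemma~\ref{lem:rest} to bound $\fundeg(\ol{f_j}|_A)$, Lemmas~\ref{lem:mprop} and~\ref{lem:pgroupupperbound} to bound the degree of the indicator of the zero set, and Lemmas~\ref{lem:dirac} and~\ref{lem:sum0} to conclude.
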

     Actually, we shall prove the following stronger version.
      \begin{thm} \label{thm:rest2}
       Let $p$ be a prime, $\alpha \in \N$, and 
       let $F$ be a finite field with $q = p^{\alpha}$ elements.
       Let $f_1, \ldots, f_r \in F[x_1, \ldots, x_N]$, let
       $A$ be a subgroup of $\algop{F^N}{+}$ with
       $p^{M}$ elements. 
          We assume that
      \[
         M  > \alpha \sum_{j=1}^r \pdeg (f_j).
      \]
      Then $p$ divides the cardinality
      of $\{ \vb{a} \in A  \mid
      f_1 (\vb{a}) = \cdots =  f_r (\vb{a}) = 0 \}$.
      \end{thm}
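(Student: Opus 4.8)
The plan is to transcribe the argument behind Theorem~\ref{thm:warngroup} to the restricted domain, inserting the restriction Lemma~\ref{lem:rest} as the one new ingredient. First I would set up the counting function. Let $\chi_0^{F^r} : F^r \to \Z_p$ be the characteristic function of $0 \in F^r$, and set $g := \chi_0^{F^r}(\ol{f_1}, \ldots, \ol{f_r}) : F^N \to \Z_p$, so that $g(\vb{x}) = 1$ exactly when $\vb{x}$ is a common zero of all $\ol{f_i}$ and $g(\vb{x}) = 0$ otherwise. Then $\sum_{\vb{x} \in A} g(\vb{x})$ is the residue modulo $p$ of the cardinality $|\{\vb{a} \in A \mid \ol{f_1}(\vb{a}) = \cdots = \ol{f_r}(\vb{a}) = 0\}|$, so it suffices to show that this sum vanishes in $\Z_p$. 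I will obtain this from Lemma~\ref{lem:sum0} applied to the group $A$ and the function $g|_A$.

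Second, I would identify the maximal characteristic-function degree on $A$. Since $F$ has characteristic $p$, the additive group $\algop{F^N}{+}$ is isomorphic to $(\Z_p)^{N\alpha}$, hence elementary abelian; therefore its subgroup $A$, having $p^M$ elements, is isomorphic to $(\Z_p)^M$. By Lemma~\ref{lem:dirac}, applied with all exponents equal to $1$, the characteristic function $\chi_0^A : A \to \Z_p$ of $0$ satisfies $\fundeg(\chi_0^A) = M(p-1)$. Thus Lemma~\ref{lem:sum0} will yield $\sum_{\vb{x} \in A} (g|_A)(\vb{x}) = 0$ as soon as I establish $\fundeg(g|_A) < M(p-1)$.

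Third, I would bound the degree of $g|_A$. By Lemma~\ref{lem:rest}, $\fundeg(g|_A) \le \fundeg(g)$. Writing $\chi_0^{F^r}(y_1, \ldots, y_r) = \prod_{i=1}^r \chi_0^F(y_i)$, I get $g = \prod_{i=1}^r (\chi_0^F \circ \ol{f_i})$, so Lemma~\ref{lem:mprop} together with Theorem~\ref{thm:comp} gives $\fundeg(g) \le \sum_{i=1}^r \fundeg(\chi_0^F) \cdot \fundeg(\ol{f_i})$. The group $\algop{F}{+} \isomorphic (\Z_p)^\alpha$ and $\chi_0^F$ takes values in $\Z_p$, which has exponent $p$, so Lemma~\ref{lem:pgroupupperbound} gives $\fundeg(\chi_0^F) \le \alpha(p-1)$, while Theorem~\ref{thm:fp} gives $\fundeg(\ol{f_i}) \le \pdeg(f_i)$. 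Hence $\fundeg(g) \le \alpha(p-1) \sum_{i=1}^r \pdeg(f_i)$. Since $M > \alpha \sum_j \pdeg(f_j)$ and $p - 1 > 0$, multiplying the hypothesis by $p-1$ yields $\fundeg(g|_A) \le \alpha(p-1)\sum_j \pdeg(f_j) < M(p-1)$, which is exactly the inequality needed to invoke Lemma~\ref{lem:sum0} and conclude.

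I expect the only genuine subtleties to be two pieces of bookkeeping rather than a hard obstacle: first, the codomain of $\chi_0^F$ is $\Z_p$, a subgroup of $\algop{F}{+}$, so its functional degree is unaffected by whether one regards it as $\Z_p$-valued or $F$-valued (the annihilator condition $I^{n+1} * \chi_0^F = 0$ is a statement inside the subgroup $\Z_p$); second, the clean fact that every subgroup of $F^N$ is elementary abelian, which is precisely what pins the maximal degree on $A$ to $M(p-1)$ and makes the factor $\alpha$ appear in the hypothesis. Everything else is a direct adaptation of the proof of Theorem~\ref{thm:warngroup}, with Lemma~\ref{lem:rest} used to descend from $F^N$ to the subgroup $A$.
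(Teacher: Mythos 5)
Your proof is correct and follows essentially the same route as the paper: the paper simply packages your second and third paragraphs as an application of Theorem~\ref{thm:warngroup} to $A \isomorphic \Z_p^M$ and $B \isomorphic \Z_p^{\alpha}$ (whose proof is exactly your combination of Lemmas~\ref{lem:sum0}, \ref{lem:degchar}, \ref{lem:mprop}, \ref{lem:pgroupupperbound} and Theorem~\ref{thm:fp}), with Lemma~\ref{lem:rest} supplying the descent to the subgroup in both versions.
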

      \begin{proof}
        We want to use Theorem~\ref{thm:warngroup}
        with $N := 1$,  $m := M$, $n := \alpha$,
        $\alpha_i = \beta_j = 1$ for $i \in \ul{M}$,
        $j \in \ul{\alpha}$; then $A \cong \Z_p^{M}$ and
        $B \cong \Z_p^{\alpha}$.
        We will now verify that with these settings, the
        inequality in the assumption of Theorem~\ref{thm:warngroup}
        is satisfied. 
        We  have
        $N (\sum_{i=1}^M (p^{\alpha_i} - 1)) =
        M (p - 1) >
        (\alpha \sum_{j=1}^r \pdeg (f_j))(p-1)$.
         By Theorem~\ref{thm:fp}, we have
        $\pdeg (f_j) \ge \fundeg (\ol{f_j})$ for each $j$,
        and hence by Lemma~\ref{lem:rest},
        $\pdeg (f_j) \ge \fundeg (\ol{f_j}|_A)$.
        Thus $(\alpha \sum_{j=1}^r \pdeg (f_j)) (p-1)
        \ge (\sum_{j=1}^r \fundeg(\ol{f_j}|_A)) \, \alpha \,(p-1) =
            (\sum_{j=1}^r \fundeg(\ol{f_j}|_A)) (\sum_{i=1}^{\alpha} (p - 1)) =
        (\sum_{j=1}^r \fundeg(\ol{f_j}|_A)) (\sum_{i=1}^{\alpha} (p^{\beta_i} - 1))$.
        Theorem~\ref{thm:warngroup} now yields
        that $p$ divides $|\{ \vb{a} \in A  \mid
        f_1 (\vb{a}) = \cdots =  f_r (\vb{a}) = 0 \}|$.
      \end{proof}
    Theorem~\ref{thm:rest1} is an immediate consequence of
    this result, since $A := \prod_{i=1}^N A_i$ is a subgroup
    of $\algop{F^N}{+}$ with $p^M$ elements, where
    $M=\sum_{i=1}^N \alpha_i$.
    In the case that the subsets $A_i$ are
    subgroups of $\algop{F}{+}$, then
    Theorem~\ref{thm:rest0} can then be derived from
    Theorem~\ref{thm:rest1} in the following way:
    Let $p$ be a prime and $\alpha \in \N$ be such that
    $q = p^{\alpha}$, and
    assume that for each $i \in \ul{N}$, $A_i$ is a subgroup of
    $\algop{F}{+}$ with $p^{\alpha_i}$ elements, and
    that
    $\sum_{i=1}^N (p^{\alpha_i} - 1) > (p^{\alpha} - 1) \sum_{j=1}^r \deg (f_j)$.
    In order to show that the assumptions of Theorem~\ref{thm:rest2}
    are fulfilled, we first
    estimate
    $\sum_{j = 1}^r \deg_p (f_j)$. Since $s_p (n) \le n$ for all $n \in \N$,
    we have
    $\sum_{j = 1}^r \deg_p (f_j) \le
    \sum_{j = 1}^r \deg   (f_j)$. By the assumption,
    $\sum_{j = 1}^r \deg   (f_j) < \sum_{i=1}^N ((p^{\alpha_i} - 1) / (p^{\alpha} -1)) = \sum_{i = 1}^N (q^{\alpha_i / \alpha} - 1) / (q - 1)$.
    Now we consider the function $e(x) = (q^x - 1) / (q - 1)$ in
    the real interval $[0,1]$. We have $e(0) = 0$ and $e(1) = 1$.
    Since $e$ is convex, we therefore have $e(x) \le x$ for all $x \in [0,1]$.
    Thus $\sum_{i = 1}^N (q^{\alpha_i / \alpha} - 1) / (q - 1) \le
    \sum_{i = 1}^N (\alpha_i / \alpha)$,
    and therefore $\alpha \sum_{j=1}^r \deg_p (f_j) < \sum_{i=1}^N \alpha_i = M$. 
    Applying Theorem~\ref{thm:rest1} we obtain that the set of common zeroes of
    the $f_j$'s has cardinality divisible by $p$.
 
     The bound in the assumptions of Theorem~\ref{thm:warngroup}
     can be improved if we know that the functions $f_i$ are
     not surjective. For this improvement, we first need an
     auxiliary result.
     \begin{lem} \label{lem:c0}
       Let $p$ be a prime, let $B$ be an abelian group of exponent $p$,
       and let $S$ be a finite subset of $B$ with $0 \in S$.
       Then there is a function $c_0^S : B \to \Z_p$ such that
       $c_0^S (0) = 1$, $c_0^S (s) = 0$ for $s \in S \setminus \{0\}$,
       and $\fundeg (c_0^S) \le |S| - 1$.
     \end{lem}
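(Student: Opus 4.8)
The plan is to build $c_0^S$ as a product of affine functions $B \to \Z_p$, each of which kills exactly one of the nonzero elements of $S$. Write $S \setminus \{0\} = \{s_1, \ldots, s_{k-1}\}$, where $k := |S|$. Since $B$ has exponent $p$, it is a vector space over the field $\Z_p$, so for each $j \in \ul{k-1}$ the nonzero vector $s_j$ admits a group homomorphism (linear functional) $\lambda_j : B \to \Z_p$ with $\lambda_j (s_j) \neq 0$; rescaling $\lambda_j$ by the inverse of this value, which is a unit of the field $\Z_p$, I may assume $\lambda_j (s_j) = 1$. I would then set $\ell_j (x) := 1 - \lambda_j (x)$, so that $\ell_j (0) = 1$ and $\ell_j (s_j) = 0$, and define
\[
c_0^S := \prod_{j=1}^{k-1} \ell_j.
\]

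Next I would verify the three required properties. Evaluating at $0$ gives $c_0^S (0) = \prod_{j=1}^{k-1} \ell_j (0) = 1$. For a fixed $i \in \ul{k-1}$, the $i$-th factor satisfies $\ell_i (s_i) = 0$, so the whole product vanishes at $s_i$; hence $c_0^S (s) = 0$ for every $s \in S \setminus \{0\}$. For the degree bound, each $\lambda_j$ is a group homomorphism, so $\fundeg (\lambda_j) \le 1$ by Lemma~\ref{lem:dprop}\eqref{it:dphom}; since the constant function $1$ has functional degree $0$, Lemma~\ref{lem:aprop}\eqref{it:d1} gives $\fundeg (\ell_j) \le \max\,(0,1) = 1$. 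Finally, the multiplicative property Lemma~\ref{lem:mprop}, applied with the ring $\Z_p$, yields $\fundeg (c_0^S) \le \sum_{j=1}^{k-1} \fundeg (\ell_j) \le k-1 = |S| - 1$, as required.

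The construction is essentially forced, and I do not expect any serious obstacle. The only point requiring a word of care is the existence of the separating functionals $\lambda_j$: this is the standard fact that over a field every nonzero vector is sent to a nonzero scalar by some coordinate functional (extend $\{s_j\}$ to a basis of $B$), and in the intended applications $B$ is finite so no appeal to a Hamel basis is needed. Everything else is a direct bookkeeping of the values of the product together with the already-established degree inequalities for homomorphisms, sums, and products.
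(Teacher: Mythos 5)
Your proof is correct and is essentially the paper's argument: the paper builds the same product $\prod_{j}(1-h_j(x))$ of affine functionals, merely organizing it as an induction on $|S|$ that peels off one factor at a time, and it invokes the same ingredients (existence of a separating $\Z_p$-linear functional, Lemma~\ref{lem:dprop}\eqref{it:dphom}, and the multiplicative bound of Lemma~\ref{lem:mprop}). No substantive difference.
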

     \begin{proof}
       We proceed by induction on $|S|$. For $|S| = 1$,
       $c_0^S (x) := 1$ satisfies the required properties.
       If $|S| > 1$, we choose $s \in S \setminus \{0\}$.
       Since $B$ is of exponent $p$ and hence a vectorspace
       over $\Z_p$, there is a homomorphism $h : B \to \Z_p$
       with $h(s) = 1$. We choose $c_0^{S \setminus \{s\}}$ using the
       induction hypothesis and define
       $c_0^S (x) := (1 - h(x)) \cdot c_0^{S \setminus \{s\}} (x)$.
       Since $\fundeg (h (x)) = 1$, we obtain
       $\fundeg (c_0^{S}) \le 1 + (|S|- 2) = |S| - 1$.
     \end{proof}   
       
     \begin{thm}  \label{thm:restrange}
           Let $p$ be a prime, let $k \in \N$,
    let $\alpha_1, \ldots, \alpha_k \in \N_0$,
    let $A := \prod_{i=1}^k \Z_{p^{\alpha_i}}$,  let
    $B$ be an abelian group of exponent $p$, and let
    $\delta (A, \Z_p) = \sum_{i=1}^k (p^{\alpha_i} - 1)$.
    Let $f_1, \ldots, f_r : A \to B$.
    If
    \[
    \delta (A, \Z_p) > \sum_{i = 1}^r (\left|\range(f_i) \right| - 1) \fundeg (f_i),
    \]
    then $p$ divides $\left| V(f_1, \ldots, f_r) \right|$.
     \end{thm}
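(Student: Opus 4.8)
The plan is to imitate the proof of Theorem~\ref{thm:warngroup}, but to exploit the non-surjectivity of the $f_i$ by replacing the characteristic function $\chi_0^B$ of $0$ in $B$ with a cheaper function that only needs to behave correctly on $\range(f_i)$. First I would dispose of a degenerate case: if $0 \notin \range(f_i)$ for some $i$, then no $x \in A$ satisfies $f_i(x) = 0$, so $V(f_1, \ldots, f_r) = \varnothing$ and $p$ divides $|V(f_1, \ldots, f_r)| = 0$ trivially. Hence I may assume $0 \in \range(f_i)$ for every $i \in \ul{r}$. As in the earlier proofs, the quantity $\sum_{x \in A} \prod_{i=1}^r \chi_0^B(f_i(x))$ is exactly the residue of $|V(f_1, \ldots, f_r)|$ modulo $p$, since the summand equals $1$ when $x$ is a common zero and $0$ otherwise. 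By Lemma~\ref{lem:deltachi} we have $M := \fundeg(\chi_0^{A,1}) = \delta(A, \Z_p)$, so by Lemma~\ref{lem:sum0} it suffices to prove that the function $x \mapsto \prod_{i=1}^r \chi_0^B(f_i(x))$ has functional degree strictly less than $\delta(A, \Z_p)$.

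Here is the key step giving the improvement over Theorem~\ref{thm:warngroup}. For each $i$, set $S_i := \range(f_i)$, a finite subset of $B$ containing $0$. Applying Lemma~\ref{lem:c0} to $S_i$ yields a function $c_0^{S_i} : B \to \Z_p$ with $c_0^{S_i}(0) = 1$, with $c_0^{S_i}(s) = 0$ for $s \in S_i \setminus \{0\}$, and with $\fundeg(c_0^{S_i}) \le |S_i| - 1 = |\range(f_i)| - 1$. Since $f_i(x)$ always lies in $S_i$, the compositions $c_0^{S_i} \circ f_i$ and $\chi_0^B \circ f_i$ agree at every point of $A$. Hence, by Theorem~\ref{thm:comp}, $\fundeg(\chi_0^B \circ f_i) = \fundeg(c_0^{S_i} \circ f_i) \le (|\range(f_i)| - 1)\,\fundeg(f_i)$, which is cheaper than the crude bound $\fundeg(\chi_0^B)\,\fundeg(f_i)$ used in the unrestricted theorem. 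Now Lemma~\ref{lem:mprop} (applied $r-1$ times) gives $\fundeg\big(\prod_{i=1}^r \chi_0^B \circ f_i\big) \le \sum_{i=1}^r (|\range(f_i)| - 1)\,\fundeg(f_i)$, which by hypothesis is strictly less than $\delta(A, \Z_p) = M$. Lemma~\ref{lem:sum0} then yields $\sum_{x \in A} \prod_{i=1}^r \chi_0^B(f_i(x)) = 0$, and therefore $p$ divides $|V(f_1, \ldots, f_r)|$.

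The one point requiring care --- and the main obstacle --- is the replacement of $\chi_0^B$ by $c_0^{S_i}$. The crucial observation is that $c_0^{S_i}$ is \emph{not} required to vanish off $\range(f_i)$, so Lemma~\ref{lem:c0} only has to interpolate the finitely many values prescribed on $S_i$, and composing with $f_i$ erases any discrepancy outside $S_i$. This is precisely what lowers the per-factor degree from $\delta(B,\Z_p)\,\fundeg(f_i)$ to $(|\range(f_i)| - 1)\,\fundeg(f_i)$; everything else is a verbatim repetition of the degree bookkeeping already carried out in Theorem~\ref{thm:warngroup}.
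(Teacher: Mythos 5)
Your proposal is correct and follows essentially the same route as the paper's own proof: both reduce to the case $0\in\range(f_i)$, replace $\chi_0^B$ by the interpolating functions $c_0^{S_i}$ from Lemma~\ref{lem:c0}, bound the degree of the product by $\sum_{i=1}^r(|\range(f_i)|-1)\fundeg(f_i)$ via Theorem~\ref{thm:comp} and Lemma~\ref{lem:mprop}, and conclude with Lemma~\ref{lem:sum0}. The only cosmetic difference is that you introduce $\chi_0^B\circ f_i$ first and then observe it coincides with $c_0^{S_i}\circ f_i$, while the paper works with the latter directly.
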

     \begin{proof}
       Let $S_i := \range (f_i)$.
       If there is an $i \in \ul{r}$ such that
       $0 \not\in \range (f_i)$, then $V(f_1, \ldots, f_r) = \emptyset$,
       and the claim is true. Hence we can assume
       $0 \in S_i$ for all $i \in \ul{r}$.
       Now consider the function
       \[
       f (x) := \prod_{i = 1}^r c_0^{S_i} (f_i (x)),
       \]
       where $x \in B$ and the functions $c_0^{S_i}$ are those constructed
       in Lemma~\ref{lem:c0}.
       We have $\fundeg (f) \le \sum_{i = 1}^r (|S_i| - 1) \fundeg (f_i)
       < \delta (A, \Z_p)$. Now Lemmas~\ref{lem:sum0} and~\ref{lem:degchar}  yield
       $\sum_{x \in A} f(x) = 0$. Since $f(x) = 1$ if $x \in V(f_1, \ldots, f_r)$
       and $f(x) = 0$ else, this implies that $p$ divides $\left|V(f_1, \ldots, f_r)
       \right|$.
     \end{proof}  
     
     \begin{cor} \label{cor:restrange}
       Let $p$ be a prime, let $\alpha \in \N$, and
       let $F$ be a finite field of characteristic $p$ with
       $q = p^{\alpha}$ elements, and let
       $f_1, \ldots, f_r \in F[x_1, \ldots, x_N]$.
       If
       \[
       N \alpha (p-1) > \sum_{i = 1}^r (\left|\range(\ol{f_i}) \right| - 1)
       \pdeg (f_i),
       \]
       then
       $p$ divides $\left|V(f_1, \ldots, f_r)\right|$.
    \end{cor}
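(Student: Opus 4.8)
The plan is to deduce this corollary directly from Theorem~\ref{thm:restrange} by viewing the polynomial functions over $F$ as functions on a finite abelian $p$-group. First I would set $A := F^N$ and $B := F$, regarded as abelian groups. Since $F$ has $q = p^{\alpha}$ elements and characteristic $p$, its additive group is isomorphic to $\Z_p^{\alpha}$, and hence $A \cong \Z_p^{N\alpha}$. Thus $A = \prod_{i=1}^k \Z_{p^{\alpha_i}}$ with $k = N\alpha$ and all $\alpha_i = 1$, while $B$ has exponent $p$. With these identifications, the computations of Section~\ref{sec:bounds} (specifically $\delta(\Z_p^{N\alpha}, \Z_p) = N\alpha(p-1)$, which follows from Lemmas~\ref{lem:pgroupupperbound} and~\ref{lem:dirac}) give $\delta(A, \Z_p) = N\alpha(p-1)$. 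This matches the left-hand side of the inequality in the corollary and simultaneously verifies the hypothesis $\delta(A,\Z_p) = \sum_{i=1}^k (p^{\alpha_i}-1)$ required by Theorem~\ref{thm:restrange}.

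Next I would control the functional degrees of the induced functions. By Theorem~\ref{thm:fp}, each $\ol{f_i}$ satisfies $\fundeg(\ol{f_i}) \le \pdeg(f_i)$. Since $\left|\range(\ol{f_i})\right| - 1 \ge 0$, multiplying by this nonnegative quantity and summing preserves the inequality, so that
\[
\sum_{i=1}^r \left(\left|\range(\ol{f_i})\right| - 1\right)\fundeg(\ol{f_i}) \le \sum_{i=1}^r \left(\left|\range(\ol{f_i})\right| - 1\right)\pdeg(f_i) < N\alpha(p-1) = \delta(A, \Z_p),
\]
where the strict inequality is exactly the hypothesis of the corollary. Hence the functions $\ol{f_1}, \ldots, \ol{f_r} : A \to B$ satisfy precisely the assumption of Theorem~\ref{thm:restrange}.

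Finally, Theorem~\ref{thm:restrange} applies and yields that $p$ divides $\left|V(\ol{f_1}, \ldots, \ol{f_r})\right|$, which equals $\left|V(f_1, \ldots, f_r)\right|$ since the common zero set of the polynomials is the common zero set of the functions they induce. Since the argument is essentially a translation into the language of the functional degree, I do not expect a serious obstacle; the only points requiring care are the correct identification of $\algop{F^N}{+}$ with $\Z_p^{N\alpha}$ and the matching of the range quantities $\range(\ol{f_i})$ appearing in the two statements, both of which are routine.
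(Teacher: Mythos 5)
Your proposal is correct and follows the paper's own argument essentially verbatim: set $A := F^N \cong \Z_p^{\alpha N}$, compute $\delta(A,\Z_p) = N\alpha(p-1)$, bound $\fundeg(\ol{f_i}) \le \pdeg(f_i)$ via Theorem~\ref{thm:fp}, and apply Theorem~\ref{thm:restrange}. Your explicit checks (that $B=F$ has exponent $p$ and that the weights $|\range(\ol{f_i})|-1$ are nonnegative) are sound and merely spell out what the paper leaves implicit.
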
   
     \begin{proof}
       Let $A := F^N$. Then $A$ is isomorphic to $\Z_p^{\alpha N}$,
       and thus with the notation of  Theorem~\ref{thm:restrange}
       we have $\delta (A, \Z_p) = \alpha N (p-1)$.
       Now
       $\sum_{i = 1}^r (\left|\range( \ol{f_i}) \right| - 1) \fundeg (\ol{f_i})
           \le
           \sum_{i = 1}^r (\left|\range( \ol{f_i}) \right| - 1) \pdeg(\ol{f_i})
           < \delta (A, \Z_p)$, and
           hence Theorem~\ref{thm:restrange} yields the result.
     \end{proof}
     
\section{Warning's Second Theorem}
 Warning's Second Theorem states that if a system of $r$ polynomial 
 equations over a finite field with $q$ elements has a zero,  
 then it has at least $q^{n-d}$ zeros, where $d=\sum_{i=1}^r \deg(f_i)$.
 In the case $r=1$, this had been improved to $d = \pdeg (f_1)$ by
 \cite[Theorem~2]{MM:IOTC}.
 Using the functional degree in S.\ Asgarli's proof 
 of Warning's Second Theorem from 
 \cite{As:ANPO}, we obtain:
 \begin{thm} \label{thm:warningImprovement}
       Let $p$ be a prime, let $\alpha \in \N$,
       let $F$ be a finite field of characteristic $p$ with
        $q = p^{\alpha}$
    elements, let $r,N \in \N$, and
    let $f_1,\ldots,f_r \in F[x_1,\ldots,x_N]$.
    If $0 \in V(f_1,\ldots,f_r)$, 
      then $|V(f_1,\ldots,f_r)| \geq q^{N - \sum_{j=1}^r \pdeg(f_j)}$.
    \end{thm}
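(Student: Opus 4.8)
The plan is to reduce the statement to a minimum-support estimate for functions of bounded functional degree on an elementary abelian $p$-group, and then to prove that estimate by running Asgarli's inductive argument with ``degree'' read as functional degree. First I would record the indicator function of the zero set. Let $\chi_0^F : F \to \Z_p$ be the characteristic function of $0 \in F$; since $F \cong \Z_p^\alpha$ as an additive group, Lemma~\ref{lem:dirac} gives $\fundeg(\chi_0^F) = \alpha(p-1)$. Put
\[
g := \prod_{j=1}^r (\chi_0^F \circ \ol{f_j}) : F^N \to \Z_p,
\]
so that $g$ is the $\{0,1\}$-valued indicator of $V(f_1,\ldots,f_r)$. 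Because $0 \in V(f_1,\ldots,f_r)$ we have $g(\vb 0) = 1$, hence $g \neq 0$. Using Theorem~\ref{thm:comp}, then Theorem~\ref{thm:fp}, and finally the iterated Lemma~\ref{lem:mprop}, I would bound
\[
\fundeg(g) \le \sum_{j=1}^r \fundeg(\chi_0^F \circ \ol{f_j}) \le \sum_{j=1}^r \alpha(p-1)\,\fundeg(\ol{f_j}) \le (p-1)\,e,
\]
where $e := \alpha \sum_{j=1}^r \pdeg(f_j)$. Fixing a $\Z_p$-basis of $F$ identifies $F^N$ with $\Z_p^n$ for $n := \alpha N$ as abelian groups; since group isomorphisms have functional degree $\le 1$ (Lemma~\ref{lem:dprop}) and compose as in Theorem~\ref{thm:comp}, both $\fundeg(g)$ and $|\{\vb x \mid g(\vb x)\neq 0\}| = |V(f_1,\ldots,f_r)|$ are unchanged under this identification.

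The heart of the matter is the following Reed--Muller-type lemma, which I would prove by induction on $n$: \emph{if $g : \Z_p^n \to \Z_p$ is nonzero and $\fundeg(g) = a(p-1)+b$ with $a \in \N_0$ and $0 \le b \le p-2$, then $g$ is nonzero on at least $(p-b)\,p^{\,n-a-1}$ points.} For $n = 1$, $g$ is a nonzero univariate polynomial over $\Z_p$ of degree $\fundeg(g) \le p-1$, so it has at most $\fundeg(g)$ roots, which gives the claim directly. For the inductive step I would expand $g$ in its last variable as $g = \sum_{i=0}^{p-1} g_i\, x_n^i$ using reduced polynomial representatives, let $t$ be the largest index with $g_t \neq 0$, and observe via Theorem~\ref{thm:fp} (which makes the functional degree of a reduced polynomial over $\Z_p$ equal to its $p$-weight degree, hence graded in $x_n$) that $\fundeg(g_t) \le \fundeg(g) - t$. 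For every $\vb c$ in the support $U$ of $g_t$ the univariate slice $g(\vb c, x_n)$ has degree exactly $t$, so it is nonzero for at least $p-t$ values of $x_n$; thus the support of $g$ has size at least $|U|\,(p-t)$, and $|U|$ is bounded below by the induction hypothesis applied to $g_t$ on $\Z_p^{\,n-1}$.

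The step I expect to be the main obstacle is closing this induction, i.e.\ verifying
\[
|U|\,(p-t) \ge (p-b)\,p^{\,n-a-1}
\]
for every admissible $t \in \{0,\ldots,p-1\}$. Writing $\fundeg(g_t) \le a(p-1)+b-t$ in the normalized form $a'(p-1)+b'$ forces a split into the cases $t \le b$ (where $a'=a$, $b'=b-t$) and $t > b$ (where $a'=a-1$, $b'=p-1+b-t$); in each case the induction hypothesis gives an explicit lower bound for $|U|$, and the desired inequality reduces, after cancelling a power of $p$, to an elementary polynomial inequality in $t$ that I would check to be nonnegative on the relevant range.

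Granting the lemma, I would finish as follows: since $\fundeg(g) \le (p-1)e$ forces $a \le e$, with $a = e$ only when $b = 0$, the lemma yields $(p-b)\,p^{\,n-a-1} \ge p^{\,n-e}$, and therefore $|V(f_1,\ldots,f_r)| \ge p^{\,n-e} = q^{\,N - \sum_{j} \pdeg(f_j)}$. When $\sum_j \pdeg(f_j) > N$ the exponent is negative and the bound is trivial, since $|V(f_1,\ldots,f_r)| \ge 1$ by hypothesis.
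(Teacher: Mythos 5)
Your argument is correct, but it follows a genuinely different route from the paper. The paper runs Asgarli's combinatorial induction directly: it first treats the prime field, counting incidences between nonzero solutions and hyperplanes through the origin, feeding each hyperplane section back into the induction hypothesis and using Warning's First Theorem (Theorem~\ref{thm:warngroupA}) to upgrade the resulting strict inequality $s-1 > p^{N-D}-p$ to $s \ge p^{N-D}$ via divisibility by $p$; the case $q=p^{\alpha}$ is then handled by splitting each $f_j$ into $\alpha$ coordinate functions $h_i\circ \ol{f_j}$. You instead encode $V(f_1,\ldots,f_r)$ by the single indicator $g=\prod_j \chi_0^F\circ\ol{f_j}$, bound $\fundeg(g)\le (p-1)e$, transport everything to $\Z_p^{\alpha N}$, and invoke the minimum-weight bound for generalized Reed--Muller codes (the Kasami--Lin--Peterson formula $(p-b)p^{n-a-1}$), proved by the classical leading-coefficient slicing induction on the number of variables. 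I checked the two elementary inequalities you deferred, $(p-b+t)(p-t)\ge(p-b)p$ for $t\le b$ and $(t-b+1)(p-t)\ge p-b$ for $b<t\le p-1$, and both hold, as does the monotonicity of the bound in the degree that you implicitly use when only an upper bound $\fundeg(g_t)\le d-t$ is available; note also that for the case $a'=a-1$ you need $a\ge 1$, which is automatic since $t\le\deg g$. Your route buys a sharper intermediate statement (the exact minimum support of a nonzero function of given functional degree, not just the power of $p$ below it) and avoids any appeal to Warning's First Theorem, at the cost of leaving the pure functional-degree framework: you must pass through reduced polynomial representatives on $\Z_p^n$ via Theorem~\ref{thm:fp} and carry out the case analysis on $t$. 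The paper's route is shorter given that Theorem~\ref{thm:warngroupA} is already in hand, but delivers only the $p$-power bound.
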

  \begin{proof}
    We first consider the case $\alpha = 1$. For this case,
    we follow the proof of S.\ Asgarli in \cite{As:ANPO}.
    Let $D := \sum_{i = 1}^r \fundeg (f_i)$.
    We first show
    \begin{equation} \label{eq:asg1}
       |V(f_1, \ldots, f_r)| \geq p^{N-D}.
    \end{equation}
    We proceed by induction on $N - D$.
    If $N \le D$, then
    $p^{N-D} \le 1$, and the existence
    of one solution is guaranteed by the assumption.
    For the induction step, we assume $N - D \ge 1$.
    Let $s := | V(f_1, \ldots, f_r) |$. 
    Then Theorem~\ref{thm:warngroupA} implies that $p$ divides $s$.
    As in \cite{As:ANPO}, we count
    \(
    X = \{ (x, H) \mid x \in V(f_1,\ldots, f_r) \setminus \{0\},\,
            H \text{ is a hyperplane in } \Z_p^N \text{ with } 0 \in H,\,
            x \in H \}
    \)
            in two ways.
            Since each $x \neq 0$ is contained in exactly
            $(p^{N-1} - 1)/(p-1)$ hyperplanes through the origin,
            we have
            $|X| = \frac{(s-1)(p^{N-1} - 1)}{p-1}$.
            Each hyperplane $H$ is the solution set of an equation $g = 0$
            with $\fundeg (g) = 1$.
            By the induction hypothesis, $|V(f_1,\ldots, f_r, g)| \ge p^{N-D-1}$.
            Hence, in each hyperplane $H$ with $0 \in H$,
            we find at least $p^{N-D-1}-1$ nonzero elements
            of $V(f_1, \ldots, f_r)$, and therefore
            $|X| \ge \frac{p^N - 1}{p-1} (p^{N-D-1} - 1)$.
            This implies
            $s-1 \ge \frac{p^N - 1}{p^{N-1} - 1} (p^{N-D-1} - 1) >
            p^{N-D} - p$.
            Since $p \mid s$, this implies $s \ge p^{N-D}$, completing
            the proof of~\eqref{eq:asg1}.

            Now let $\alpha$ be an arbitrary natural number.
            Taking a basis $(b_1, \ldots, b_{\alpha})$ of $F$ over
            $\Z_p$, we define
            $h_1, \ldots, h_{\alpha} : F \to \Z_p$ to be the group homomorphisms
            with $x = \sum_{i = 1}^{\alpha} h_i (x) b_i$ for all $x \in F$.
            Now we consider the system
            $h_1 \circ \ol{f_1} = \cdots = h_{\alpha} \circ \ol{f_1} =
            h_1 \circ \ol{f_2} = \cdots = h_{\alpha} \circ \ol{f_2} = \cdots
            = h_1 \circ \ol{f_r} = \cdots = h_{\alpha} \circ \ol{f_r} = 0$,
            where all $\alpha r$ functions map $\Z_p^{\alpha N}$ into
            $\Z_p$.
            From~\eqref{eq:asg1}, we know that this system
            has at least
            $p^E$ solutions, where
            $E = \alpha N - D'$ and
            $D' = \sum_{i=1}^{\alpha} \sum_{j=1}^r
            \fundeg (h_i \circ f_j)$.
            Since 
            $D' \le \sum_{i=1}^{\alpha} \sum_{j=1}^r  \fundeg (h_i) \fundeg (\ol{f_j})
                \le \sum_{i=1}^{\alpha} \sum_{j=1}^r  \fundeg (\ol{f_j})
            = \alpha \sum_{j=1}^r \fundeg (\ol{f_j}) \le
              \alpha \sum_{j=1}^r \pdeg (f_j)$,
              the result follows.
   \end{proof}
            
    \section*{Acknowledgements}
    The authors thank S.\ Fioravanti,  S.\ Kreinecker,
    P.\ Mayr,
    L.\ M\'{e}rai, C.\ Raab, G.~Regensburger, C.\ Szab\'o and
    A.\ Winterhof for discussions
    on the topics of this paper. 
    \bibliography{chev74}
    \end{document}